\theoremstyle{definition}\newtheorem{teo}{Theorem}[section]
\theoremstyle{definition}\newtheorem{coro}[teo]{Corollary}
\theoremstyle{definition}\newtheorem{defi}[teo]{Definition}
\theoremstyle{definition}\newtheorem{lemma}[teo]{Lemma}
\theoremstyle{definition}\newtheorem{prop}[teo]{Proposition}
\theoremstyle{remark} 
\theoremstyle{remark}\newtheorem{strc}[teo]{}
\theoremstyle{remark}
\theoremstyle{remark}
\newcommand{\pch}{pch(\mathbb{X})}
\newcommand{\chn}{ch(\mathbb{X})}
\newcommand{\X}{\mathbb{X}}
\newcommand{\A}{\mathbb{A}}
\newcommand{\Z}{\mathbb{Z}}
\newcommand{\T}{\mathcal{T}}
\newcommand{\F}{\mathcal{F}}
\title{Torsion theories of simplicial groups with truncated Moore complex}
\author{Guillermo Andr\'es L\'opez Cafaggi}
\date{}
\begin{document}
\maketitle
\thispagestyle{firststyle}

\begin{abstract}
We introduce a linearly ordered lattice $\mu(Grp)$ of torsion theories in simplicial groups.  The torsion theories are defined where the torsion/torsion-free subcategories are given by the simplicial groups with bounded above/below Moore complex, respectively. These torsion theories extend naturally the torsion theories in internal groupoids in groups. Connections of this lattice with the homotopy groups are established since the homotopy groups of a simplicial group can be calculated as the quotients of torsion subojects.
\end{abstract}

\section{Introduction}

The notion of semi-abelian category \cite{JMT02} allows a categorical and unified treatment of the categories of groups, rings, Lie-algebras and other non-abelian categories in a similar way as abelian categories generalise abelian groups and categories of modules. Torsion theories were originally introduced for abelian categories by Dickson, and have been generalized by several authors to different non-abelian categories as for example in \cite{BouGrn06}, \cite{CDT06} and \cite{JT07}.

For a torsion theory in a semi-abelian category $\X$ we mean a pair $(\T, \F)$ of full subcategories  such that:
\begin{enumerate}
\item any morphism $f: T \to F$ with $T$ in $\T$ and $F$ in $\F$ is the zero morphism;
\item for any object $X$ in $\X$ there is a short exact sequence
\[\begin{tikzcd} 0\ar[r] & T_X\ar[r] & X\ar[r] & F_X\ar[r] &0 \end{tikzcd}\]
with $T_X$ in $\T$ and $F_X$ in $\F$.
\end{enumerate}
An internal groupoid $X$ in $\X$ is a diagram:
\[\begin{tikzcd}  X_2\ar[r, shift right=3.5, "p_0"]\ar[r, "p_1"]\ar[r, shift left=3.5, "p_2"] & X_1\ar[r, shift right=3.5, "d_1"]\ar[r, shift left=3.5,"d_1"] &  X_0\ar[l, "s_0"'] \end{tikzcd}\]
where 
\[\begin{tikzcd} X_2\ar[r, "p_1"]\ar[d, "p_0"] & X_1\ar[d, "d_0"] \\ X_1\ar[r, "d_1"] & X_0\end{tikzcd}\] is a pullback square. The objects $X_0$ and $X_1$ are called the `objects of objects' and the `object of arrows' of $X$, the morphisms $d_0$, $d_1$ are called `domain' and `codomain' and the morphisms $p_0,p_1,p_2,d_0,d_0,s_0$ satisfy the usual equations that determine a category.
The category $Grpd(\X)$ of internal groupoids in a semi-abelian category $\X$, which is itself semi-abelian, exhibits two examples of non-abelian torsion theories. The first is given by the pair $(Ab(\X), Eq(\X))$ where $Ab(\X)$ is the category of internal abelian obejcts in $\X$ and $Eq(\X)$ is the category of equivalence relations, i.e. internal groupoids where the induced morphism $(d_0,d_1):X_1 \to X_0^2$ is monic \cite{BouGrn06}. The second example is given by $(Conn(Grpd(\X)), Dis(\X))$, where $Conn(Grpd(\X))$ is the category of connected internal groupoids and $Dis(\X)$ is the category of discrete groupoids. Since for an internal groupoid $X$ the nerve $\mathcal{N}(X)$ is a simplicial object in $\X$ it is natural to ask if there are torsion theories in simplicial objects such that they expand or generalize those of internal groupoids.
 
In section 2, we recall the basics theory of torsion theories in semi-abelian categories. Section 3 and 4 introduce two different families of torsion theories, $\mathcal{COK}_n$ and $\mathcal{KER}_n$, in the category of proper chain complexes and we exhibits some connections with the homological aspects of chain complexes. Section 5 and 6 introduce the torsion theories $\mu_{n \geq}$, $\mu_{\geq n}$ in simplicial groups whose associated Moore complex behave as those in proper chains. Section 7 studies the homotopy groups of the simplicial groups defined by the torsion theories of the lattice $\mu(Grp)$. In particular, the homotopy groups of a simplicial group $X$ can be studied using torsion subobjects. 

\section{Torsion theories in semi-abelian categories}

\begin{strc} \textit{Notation}. By a regular category $\X$ we mean a finitely complete category with coequalizers of kernel pairs with the property that any morphism $f:X \to Y$ in $\X$  factors as a regular epimorphism $e_f$ followed by a monomorphism $m_f$:
\[\begin{tikzcd} X\ar[rr, "f"]\ar[rd, "e_f"'] & & Y \\ & f(X)\ar[ru, "m_f"'] & \end{tikzcd} \]
and these factorizations are pullback stable. As usual, we will call the subobject represented by $m_f$ the \textit{image} of $f$.
A category $\X$ is pointed if it has a zero object $0$, i.e. an object which is both initial and terminal. For any pair of objects $X, Y$ in $\X$ the unique morphism $X \to Y$ that factors through the zero object will be denoted by $0$.

A regular category $\X$ is called (Barr-)exact if any equivalence relation is a kernel pair $Eq(f)$ for some morphism $f$ in $\X$ \cite{Bar71}.
\end{strc}

\begin{defi} \cite{JMT02} A category $\X$ is called \textit{semi-abelian} if it is pointed, (Barr-)exact, protomodular in the sense of Bourn (\cite{Bou91}) and has binary coproducts.
\end{defi}

In a semi-abelian category, a short exact sequence is a pair of composable morphisms $(k,p)$, as in the diagram
\[\begin{tikzcd} 0\ar[r] & K\ar[r, "k"] & X\ar[r, "p"] & Y\ar[r] &0\end{tikzcd} \]
such that $k=ker(p)$ is the kernel of $p$ and $p=cok(k)$ is the cokernel of $k$. In such a short exact sequence the object $Y$ will be denoted as $X/K$. Recall that in a semi-abelian category  $\X$ regular epimorphisms are normal epimorphisms, that is cokernels of some morphisms in $\X$. 

We will need the following results. 

\begin{lemma}\label{n3t}  \cite{BorBou04} Let $\X$ be a semi-abelian category. Given two normal subobjects $k: K \to A$ and $l: L \to A$ such that $k \leq l$, i.e. $k$ factors through $l$, then there is a short exact sequence:
\[\begin{tikzcd} 0\ar[r]& L/K\ar[r]&  A/K\ar[r] & A/L\ar[r]& 0 \end{tikzcd}. \]
\end{lemma}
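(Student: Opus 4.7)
The plan is to reduce the statement to an application of the snake lemma in the semi-abelian setting, after first ensuring that the quotient $L/K$ is well-defined.

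Since $k\leq l$, factor $k=l\circ j$ for a unique monomorphism $j:K\to L$. I first verify that $j$ is a normal monomorphism in $L$. Let $p_K=cok(k):A\to A/K$; then the composite $p_K\circ l:L\to A/K$ has kernel given by the pullback of $\ker(p_K)=k$ along the mono $l$, which is exactly $j$ since $K\leq L$. Hence $j=\ker(p_K\circ l)$ is normal in $L$, and the quotient $L/K$ exists, with cokernel map $q_j:L\to L/K$.

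Next, I would assemble the commutative diagram with exact rows
\[\begin{tikzcd}
0\ar[r] & K\ar[r,"k"]\ar[d,"j"'] & A\ar[r,"p_K"]\ar[d,equal] & A/K\ar[r]\ar[d,"q"] & 0 \\
0\ar[r] & L\ar[r,"l"'] & A\ar[r,"p_L"'] & A/L\ar[r] & 0
\end{tikzcd}\]
where $q:A/K\to A/L$ is the unique morphism satisfying $q\circ p_K=p_L$, furnished by the universal property of $p_K=cok(k)$ together with the vanishing $p_L\circ k=p_L\circ l\circ j=0$. The snake lemma in semi-abelian categories applied to this diagram produces the exact sequence
\[0\to\ker(j)\to\ker(\mathrm{id}_A)\to\ker(q)\to cok(j)\to cok(\mathrm{id}_A)\to cok(q)\to 0.\]
Since $j$ is monic and $\mathrm{id}_A$ is invertible, four of the six terms vanish and we obtain $\ker(q)\cong L/K$, realized by an induced monomorphism $\bar{l}:L/K\to A/K$, together with $cok(q)=0$. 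Because $p_L=q\circ p_K$ is a regular epimorphism, so is $q$; combined with the identification $\ker(q)=\bar{l}$, this yields the announced short exact sequence $0\to L/K\to A/K\to A/L\to 0$.

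The main obstacle is verifying that $j$ is normal in $L$, since only then is $L/K$ meaningful in the semi-abelian context; once this is secured the rest is a clean application of the snake lemma. A more elementary alternative that sidesteps the snake lemma would be to define $\bar{l}$ directly as the factorization of $p_K\circ l$ through $q_j$, show it is a monomorphism from the monicity of $l$ together with protomodular cancellation, and identify its cokernel with $A/L$ by chasing the universal properties of $p_K$ and $p_L$.
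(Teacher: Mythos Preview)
The paper does not supply its own proof of this lemma: it is stated with a citation to \cite{BorBou04} and used as a black box thereafter. So there is no in-paper argument to compare against.

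Your argument is correct. The key step---showing that the factorization $j:K\to L$ is normal by identifying it with $\ker(p_K\circ l)$---is handled cleanly, and the subsequent application of the snake lemma (valid in any homological, hence any semi-abelian, category) delivers exactly the isomorphism $\ker(q)\cong L/K$ and the surjectivity of $q$. One tiny quibble: only three of the six terms vanish \emph{a priori} ($\ker(j)$, $\ker(\mathrm{id}_A)$, $cok(\mathrm{id}_A)$); the vanishing of $cok(q)$ is a consequence of the exact sequence, not an input, as you yourself note a sentence later. This does not affect the argument. Your alternative sketch via direct factorization and protomodular cancellation is also viable and is closer in spirit to how the result is typically established in the Borceux--Bourn reference.
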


\begin{prop}\label{iddet} \cite{JMT02} Let $\X$ be a semi-abelian category then it satisfies the following property: Given a commutative diagram in $\X$:
\[\begin{tikzcd}  A\ar[r, "m"]\ar[d, "p"] & B\ar[d, "q"]  \\  C\ar[r, "n"] & D \end{tikzcd} \]
with $p$ and $q$ normal epimorphisms, $m$ a normal monomorphism, and $n$ a monomorphism; then $n$ is a normal monomorphism.
\end{prop}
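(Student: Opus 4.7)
The plan is to recognise $n$ as the direct image of the normal subobject $m$ along the normal epimorphism $q$, and then to verify that this direct image is normal by exhibiting it as a kernel.

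First, because $p$ is a normal (hence regular) epimorphism and $n$ is a monomorphism, the equality $qm = np$ already displays $np$ as the (regular epi, mono) factorisation of the composite $qm$. By the uniqueness of image factorisations in a regular category, the image of $qm$ in $D$ is, up to isomorphism, the subobject $n: C \hookrightarrow D$.

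Next, I would construct a morphism out of $D$ whose kernel is $n$. Since $m$ is a normal monomorphism, let $c: B \to B/A$ be its cokernel, so that $m = \ker c$. Form the pushout of $c$ along $q$, producing maps $\bar c: D \to P$ and $\bar q: B/A \to P$. In a semi-abelian category, pushouts of regular epimorphisms along arbitrary morphisms are themselves regular epimorphisms, so $\bar c$ is a normal epi. Chasing the pushout square together with $cm = 0$ gives $\bar c \, n \, p = \bar q \, c \, m = 0$, and since $p$ is epic this forces $\bar c \, n = 0$. Hence $n$ factors through $j := \ker \bar c$ as $n = j\iota$ with $\iota$ a monomorphism (because $n$ is).

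The remaining and main step is to show that $\iota$ is an isomorphism, for this will identify $n$ with the kernel $j$ and thereby exhibit $n$ as a normal monomorphism. For this I would invoke the fact that in a semi-abelian category the pushout of a regular epimorphism along any morphism is simultaneously a pullback; pulling $j$ back along $q$ then realises the subobject $\ker(\bar c \circ q) = \ker c = m$ of $B$. Combined with the first step's identification of $n$ with the image of $qm$, a short diagram chase forces $\iota$ to be an isomorphism. The principal obstacle is precisely this last identification, which relies on the pushout/pullback interaction (a consequence of the Mal'tsev character) distinctive of semi-abelian categories.
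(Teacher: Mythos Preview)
The paper does not actually prove this proposition; it is quoted from \cite{JMT02} as a known property of semi-abelian categories. So there is no proof in the paper to compare yours against, and your proposal must stand on its own.

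Your first two steps are fine: $np$ is indeed the (regular epi, mono) factorisation of $qm$, so $n$ is the direct image of $m$ along $q$; and the pushout $P$ of $c$ and $q$ exists, with $\bar c$ a regular epimorphism and $\bar c\, n = 0$. (In fact one can check that $P\cong D/C$ and $\bar c$ coincides with $\operatorname{coker}(n)$, since $p$ epi gives $\operatorname{coker}(qm)=\operatorname{coker}(np)=\operatorname{coker}(n)$.)

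The gap is in the last step. The assertion that ``in a semi-abelian category the pushout of a regular epimorphism along any morphism is simultaneously a pullback'' is false. Already in abelian groups, take $B=\mathbb{Z}$, $A=2\mathbb{Z}$, $q\colon\mathbb{Z}\twoheadrightarrow\mathbb{Z}/3$: then $c\colon\mathbb{Z}\to\mathbb{Z}/2$, the pushout of $c$ and $q$ is $0$, and the pullback of $\mathbb{Z}/2\to 0\leftarrow\mathbb{Z}/3$ is $\mathbb{Z}/6\not\cong\mathbb{Z}$. Consequently $\bar q$ is not mono and $\ker(\bar c\, q)=\ker(\bar q\, c)$ is strictly larger than $\ker c$, so your identification $q^{-1}(\ker\bar c)=m$ fails. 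Worse, once one observes $\bar c\cong\operatorname{coker}(n)$, proving that $\iota$ is an isomorphism is literally the statement that $n=\ker(\operatorname{coker}(n))$, i.e.\ that $n$ is normal, so without a further idea the argument is circular.

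A route that does work uses the Mal'tsev and Barr-exact structure at the level of equivalence relations rather than pushouts: the kernel pair $R$ of $c$ is an effective equivalence relation on $B$ with normalisation $m$; its direct image $(q\times q)(R)$ is a reflexive relation on $D$, hence an equivalence relation (Mal'tsev), hence effective (Barr-exact), and its normalisation is precisely $n$. This is essentially the argument in \cite{JMT02}.
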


 Torsion theories can be defined in a more general context, but in this article we will restrict to semi-abelian categories.

\begin{defi} Let $\X$ be a semi-abelian category. A \textit{torsion theory} in $\X$ is a pair $(\T, \F)$ of full and replete subcategories of $\X$ such that:
\begin{enumerate}
\item[TT1] A morphism $F:T \to F$ with $T$ in $\T$ and $F$ in $\F$ is a zero morphism.
\item[TT2] For any object $X$ in $\X$ there is a short exact sequence:
\begin{equation}\label{sestt}
\begin{tikzcd} 0\ar[r] &T_X\ar[r, "\epsilon_X"] & X \ar[r, "\eta_X"] & F_X \ar[r]& 0\end{tikzcd} 
\end{equation} 
with $T_X$ in $\T$ and $F_X$ in $\F$ (which is necessarily unique up to isomorphism). 
\end{enumerate}
\end{defi}

In a torsion theory $(\T, \F)$, $\T$ is the torsion category whose objects are called torsion objects, and similarly $\F$ is the torsion-free category of the torsion theory. Torsion subcategories are normal mono-coreflective subcategories of $\X$, i.e. coreflective subcategories such that each component $\epsilon_X$ of the counit $\epsilon$ is a normal monomorphism, while torsion-free subcategories are normal epi-reflective subcategories, so that each component $\eta_X$ is a normal epimorphism:
\begin{equation}\label{ttadj}
\begin{tikzcd}[column sep=large] \T \ar[r, bend left, "J"]\ar[r, phantom, "\perp"] & \X \ar[r, bend left, "F"]\ar[l, bend left,"T"]\ar[r, phantom, "\perp"] & \F \ar[l, bend left, "I"] \end{tikzcd}.
\end{equation}

The $X$-component of the counit $\epsilon$ of $J \dashv  T$ and of the unit $\eta$ of $F \dashv I$ both appear in the short exact sequence (\ref{sestt}). A subcategory $\A$ of $\X$ is closed under extensions in $\X$ if every time we have a short exact sequence
\[\begin{tikzcd} 0\ar[r] & A \ar[r] & X \ar[r] & B \ar[r]& 0\end{tikzcd} \]
with $A$ and $B$ in $\A$ then  $X$ belongs to $\A$. In a torsion theory both $\T$ and $\F$ are closed under extensions in $\X$ \cite{BouGrn06}.

\begin{defi} Let $\X$ be a semi-abelian category. A \textit{preradical} in $\X$  is a normal subfunctor $\sigma:r \to Id_\X$ of the identity functor of $\X$, i.e. for all object $X$ we have a normal monomorphism $\sigma_X: r(X) \to X$ and for every morphism $f:X \to Y$ a commutative diagram:
\begin{equation}\label{prds}
\begin{tikzcd} X\ar[r, "f"] & Y \\ r(X)\ar[r, "r(f)"]\ar[u, "\sigma_X"] & r(Y)\, .\ar[u, "\sigma_Y"] \end{tikzcd}
\end{equation}
Moreover, a preradical $r$ is called:
\begin{itemize}
\item \textit{idempotent} if $rr(X) = r(X)$ for all objects $X$;
\item a \textit{radical} if $r(X/r(X))=0$ for all objects $X$;
\item \textit{hereditary} if for every monomorphism $f:X \to Y$, diagram (\ref{prds}) is a pullback.
\end{itemize}
\end{defi}

Given a preradical $r$ we can consider the $r$-torsion subcategory $\T_r$ and the $r$-torsion-free subcategory $\F_r$ of $\X$:
\[\T_r=\{X \in \X \mid r(X) \cong X\} \quad \mbox{and} \quad \F_r=\{X \in \X \mid r(X) \cong 0\}. \]
In general, the pair $(T_r, F_r)$ only satisfies axiom TT1 of a torsion theory. Conversely, a torsion theory $(\T, \F)$ defines an idempontent radical 
\[t= J T: \begin{tikzcd} \X\ar[r] & \X \end{tikzcd}.\]
In fact, there is a bijection:
\[\{\mbox{torsion theories in $\X$} \} \longleftrightarrow  \{\mbox{idempotent radicals in $\X$}\}. \]
It is easy to see that a hereditary preradical is always idempotent. Conversely, an idempotent preradical is hereditary if and only if the category $\T_r$ is closed under subobjects in $\X$, i.e. for every monomophism $m: X\to Y$ with $Y$ in $\T_r$ then $X$ is in $T_r$. Futhermore, the previous bijection is restricted to a bijection:
\[\{\mbox{hereditary torsion theories in $\X$} \} \longleftrightarrow  \{\mbox{hereditary radicals in $\X$}\}. \]

A torsion theory $(\T, \F)$ is called \textit{hereditary} if $\T$ is closed under subobjects. Similarly, $(\T, \F)$ is called \textit{cohereditary} if $\F$ is closed under quotients, i.e. for every normal epimorphism $p: X \to Y$ with $X$ in $\F$ then so is $Y$ in $\F$ (see \cite{CDT06}). It is also useful to recall that in any torsion theory $(\T, \F)$, $\T$ is always closed under quotients and $\F$ is closed under subobjects.

 In order to characterize torsion-free subcategories among normal epi-reflective subcategories, it is useful to recall the following result, which was first proved in \cite{BouGrn06} for homological categories: 

\begin{teo}\label{tfnormal} (\cite{BouGrn06}, \cite{EveGrn13}, \cite{CGJ18}) Let $\X$ be a semi-abelian category and $F \dashv I: \X \to \F$ a normal epi-reflective subcategory of $\X$ with unit $\eta$, then the following are equivalent:
\begin{enumerate}
\item $\F$ is a torsion-free subcategory of $\X$;
\item the induced radical of $F \dashv I$ is idempotent;
\item the reflector $F: \X \to \A$ is semi-left-exact;
\item the feflector $F: \X \to \A$ is normal, i.e. $F(ker(\eta_X))=0$ for every object $X$ in $\X$.
\end{enumerate}
Under these conditions the corresponding torsion category of $\F$ is given by the full subcategory $\T= Ker(F)=\{X \mid F(X) \cong 0\}$. So, $(Ker(F), \F)$ is a torsion theory in $\X$.
\end{teo}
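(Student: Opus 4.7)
My plan is to build a candidate preradical directly from the adjunction $F \dashv I$, identify $\F$ with its torsion-free class, and then exploit the bijection between torsion theories and idempotent radicals recalled earlier in the section to deduce (1)$\Leftrightarrow$(2) together with the last claim $\T = Ker(F)$. The equivalence (2)$\Leftrightarrow$(4) will then be an unpacking of definitions, and only the equivalence involving (3) requires genuinely new input.

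Concretely, since $\eta_X$ is a normal epimorphism for every $X$, I would set $r(X) := ker(\eta_X)$ with structural monomorphism $\sigma_X := ker(\eta_X)$; naturality in $X$ is inherited from $\eta$ via the universal property of the kernel, so $r$ is a preradical. It is a radical because $X/r(X) \cong IF(X) \in \F$, and on objects of $\F$ the unit is an isomorphism, whence $r(X/r(X)) \cong 0$. Moreover $\F$ coincides with $\F_r$: the inclusion $\F \subseteq \F_r$ is the previous observation, and conversely, if $r(X) \cong 0$, then $\eta_X$ is a normal epimorphism with trivial kernel, hence an isomorphism, and repleteness of $\F$ places $X$ in $\F$.

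With this in hand, (1)$\Leftrightarrow$(2) is the bijection recalled above applied to $r$: any torsion theory whose torsion-free part is $\F$ must have $r$ as its associated radical, and such a torsion theory exists if and only if $r$ is idempotent. The equivalence (2)$\Leftrightarrow$(4) comes from the identity $r(r(X)) = ker(\eta_{r(X)})$: idempotence $r(r(X)) \cong r(X)$ is equivalent to $\eta_{r(X)} = 0$, which is equivalent to $F(r(X)) \cong 0$, i.e.\ $F(ker(\eta_X)) \cong 0$. The final identification of $\T$ is equally direct: $X \in \T_r$ iff $\eta_X = 0$ iff $F(X) \cong 0$, so $\T = Ker(F)$.

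The main obstacle is the implication involving semi-left-exactness (3). Unlike the others, this is a pullback-preservation property of $F$ rather than an algebraic identity, and translating it into the concrete condition $F(ker(\eta_X)) \cong 0$ is where the semi-abelian hypothesis is genuinely needed. My plan is to follow the Cassidy--H\'ebert--Kelly framework together with its refinements in \cite{BouGrn06}, \cite{EveGrn13} and \cite{CGJ18}: in a semi-abelian category, protomodularity and the Short Five Lemma let one rewrite the relevant pullbacks along units as short exact sequences, so that $F$ being semi-left-exact becomes equivalent to $F$ sending the kernel of each $\eta_X$ to zero, thereby closing the cycle with (4). I expect the bookkeeping of these pullback manipulations, rather than any conceptual difficulty, to be the main obstacle, and would lean on the cited literature rather than reproduce the argument in full.
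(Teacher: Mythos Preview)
Your proposal is correct, and in fact does substantially more than the paper itself: the paper's proof of this theorem consists entirely of two citations, recording that $(1)\Leftrightarrow(2)\Leftrightarrow(3)$ is established in \cite{EveGrn13} and $(3)\Leftrightarrow(4)$ in \cite{BouGrn06}. You, by contrast, give a direct and essentially self-contained argument for $(1)\Leftrightarrow(2)\Leftrightarrow(4)$ via the concrete radical $r(X)=\ker(\eta_X)$ and the bijection between torsion theories and idempotent radicals already recalled in the section, deferring to the literature only for the semi-left-exactness condition $(3)$. Your grouping of the equivalences thus differs from the paper's---you link $(2)$ and $(4)$ directly rather than passing through $(3)$---but this is harmless and arguably cleaner than threading the implication through two separate references. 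The one place where you still lean on the cited sources, namely the equivalence with semi-left-exactness, is precisely where the paper does too.
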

\begin{proof} Equivalences $(1)\Leftrightarrow (2) \Leftrightarrow (3)$ are proved in  \cite{EveGrn13} and $(3)\Leftrightarrow (4)$ is proved in \cite{BouGrn06}. 
\end{proof}

\begin{strc}
Given torsion theories $(\T,\F)$ and $(\mathcal{S}, \mathcal{G})$ in $\X$ with associated idempotent radicals $\tau$ and $\sigma$ we have that $\T \subseteq \mathcal{S}$ if and only if $\mathcal{G} \subseteq \F$, this allows  us to define a partial order in the (possibly big) lattice $\X tors$ of torsion theories in $\X$:
\[ (\T,\F) \leq (\mathcal{S} , \mathcal{G}) \quad \mbox{if and only if} \quad \T \subseteq \mathcal{S}.\]
In this case we have that $\tau \leq \sigma$, so for an object $X$ in $\X$ we have $\tau(X) \leq \sigma(X)$. The lattice $\X tors$ has as  bottom and  top element the trivial torsion theories denoted as:
\[0:=(0, \X)\quad \mbox{and} \quad  \X:= (\X,0). \]

Given preradicals $\tau \leq \sigma$ we can define the quotient endofunctor as:
\[\sigma/\tau : \begin{tikzcd} \X\ar[r] & \X \end{tikzcd}, \quad \sigma/ \tau(X)=\sigma(X)/\tau(X). \]
and as consequence of Lemma \ref{n3t} for each object $X$ we have a short exact sequence:
\begin{equation}\label{eq3it}
\begin{tikzcd} 0\ar[r]& \sigma(X)/\tau(X)\ar[r]& X/\tau(X)\ar[r] & X /\sigma(X)\ar[r] &0\end{tikzcd}.
\end{equation}
\end{strc}

For abelian categories the next result is due to P. Gabriel (\cite{Sten}). A localization of a category $\X$ is a reflective subcategory $ L \dashv I :\X \to\A$ such that $L$ preserves finite limits.

\begin{teo}\label{loctt} Let $ L \dashv I: \X \to\A$ be a localization of a semi-abelian category $\X$ with unit $\eta$. The subcategories of $\X$
 \[\T_L=ker(L)=\{X\mid L(X)\cong 0\} = \{X \mid \eta_X=0 \}\] 
 and 
 \[\F_L=\{X\mid \eta_X:X\to IL(X)\ \mbox{is monic} \}\] define a torsion theory $(\T_L, \F_L)$ in $\X$.
\end{teo}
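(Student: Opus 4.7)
The plan is to verify axioms (TT1) and (TT2) directly, relying on two standard features of a localization $L \dashv I$. First, since $L$ is left exact by hypothesis and cocontinuous as a left adjoint, it preserves the (regular epi, mono) factorization in the semi-abelian category $\X$ and in particular sends normal epis to normal epis and monos to monos. Second, by the triangle identities together with the full faithfulness of $I$, the morphism $L(\eta_X)$ is inverse to $\epsilon_{L(X)}$ and hence an isomorphism for every $X$.

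For axiom (TT2), given $X$, I would take the (regular epi, mono) factorization of $\eta_X$:
\[ \eta_X = m_X \circ e_X, \quad e_X \colon X \to F_X, \quad m_X \colon F_X \to IL(X), \]
with $e_X$ a regular epi and $m_X$ a mono. Since $m_X$ is monic, $\ker e_X = \ker \eta_X$; and because regular epis are normal in a semi-abelian category, this yields a short exact sequence
\[ 0 \to \ker(\eta_X) \to X \to F_X \to 0. \]
Applying $L$ to the kernel short exact sequence of $\eta_X$ and using left exactness gives $L(\ker \eta_X) \cong \ker(L\eta_X) \cong 0$, so $\ker(\eta_X) \in \T_L$. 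The substantive work is then to show $F_X \in \F_L$.

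For the torsion-free endpoint I would apply $L$ to the factorization to get $L(m_X) \circ L(e_X) = L(\eta_X)$. Exactness forces $L(e_X)$ to remain a normal epi and $L(m_X)$ a mono, and since $L(\eta_X)$ is an isomorphism, both $L(e_X)$ and $L(m_X)$ are then isomorphisms. Naturality of $\eta$ at $e_X$ together with the fact that $e_X$ is epic then yields $\eta_{F_X} = IL(e_X) \circ m_X$, a composite of an iso and a mono, hence monic; thus $F_X \in \F_L$. Axiom (TT1) follows at once: for $f \colon T \to F$ with $T \in \T_L$ and $F \in \F_L$, naturality of $\eta$ gives $\eta_F \circ f = IL(f) \circ \eta_T = 0$ since $\eta_T = 0$, and the monicness of $\eta_F$ forces $f = 0$.

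The step I expect to be the main obstacle is the proof that $F_X \in \F_L$: it is here that one must bundle exactness of $L$ on factorizations, the invertibility of $L(\eta_X)$, and the naturality of $\eta$ into a single monomorphism condition. The remaining verifications reduce to routine adjunction and naturality manipulations.
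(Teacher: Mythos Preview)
Your proposal is correct and follows essentially the same route as the paper: factor $\eta_X$ through its image, use left exactness of $L$ to place $\ker(\eta_X)$ in $\T_L$, and use that $L(\eta_X)$ is invertible together with naturality of $\eta$ to show the image lies in $\F_L$. The only cosmetic difference is that the paper deduces that $\eta_{F_X}$ is monic via the naturality square at the mono $m_X$ (using that $\eta_{IL(X)}$ is an isomorphism), whereas you use the naturality square at the epi $e_X$ combined with epi cancellation; both arguments are equivalent.
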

\begin{proof} 

First notice that for any object $X$ the morphism $X \to 0$ factors through $\eta_X$. Then, if $\eta_X=0$ the morphism $\eta_X$ factors through $X \to 0$ and, so $L(X) \cong 0$ since $L$ is a reflection. Hence, for any object $X$ we have that $L(X)=0$  if and only if $\eta_X=0$. 
TT1) For a morphism $f:X \to Y$ consider the diagram given by the naturality of $\eta$:
\[\begin{tikzcd} X\ar[d, "\eta_X"']\ar[r, "f"] & Y\ar[d, "\eta_Y"] \\ IL(X)\ar[r, "IL(f)"'] & IL(Y)\, .  \end{tikzcd}\]
Now, if $\eta_X=0$ and $\eta_Y$ is monic it is clear that $f$ is the zero morphism.

TT2) Consider for an object $X$ the normal epi-mono factorization $(p,m)$ of $\eta_X$ and the short exact sequence
\begin{equation}\label{sesloc}
\begin{tikzcd} 0\ar[r] & ker(\eta_X) \ar[r, "k"] & X \ar[r, "p"] \ar[rd, "\eta_X"'] & \eta_X(X)\ar[d, "m"]\ar[r] &0 \\ && & IL(X)\, .& \end{tikzcd} 
\end{equation}
To see that $ker(\eta_X)$ is torsion, consider the commutative diagram:
\[\begin{tikzcd} ker(\eta_X) \ar[d, "\eta_{ker(\eta_X)}"']\ar[r, "k"] & X \ar[d, "\eta_X"] \\ IL(ker(\eta_X)) \ar[r, "IL(k)"'] & IL(X) \, .  \end{tikzcd}\]
Since $L$ preseves finite limits then $IL(k)$ is monic and since $\eta_X k=0$ this implies that $\eta_{ker(\eta_X)}=0$. To see that $\eta_X(X)$ is torsion-free consider the diagram:
\[\begin{tikzcd} 
X\ar[rr, "\eta_X"]\ar[rd, "p"] \ar[dd, "\eta_X"] &  & IL(X)\ar[dd, "\eta_{IL(X)}"]
\\ & \eta_X(X)\ar[ru, "m"] \ar[dd, "\eta_{\eta_X(X)}" near start] & 
\\IL(X)\ar[rr, "IL(\eta_X)" near start, crossing over]\ar[rd,"IL(p)"'] & & ILIL(X)
\\ & IL(\eta_X(X)) \, .\ar[ru, "IL(m)"'] & \end{tikzcd}\]
Notice that since $\A$ is a reflective subcategory then $IL(\eta_X)$ and $\eta_{IL(X)}$ are isomorphisms. Finally, $\eta_{\eta_X(X)}$ is also a monomorphism.
\end{proof}

It is also worth mentioning that, under the assumptions from above, a localization $L: \X \to \A$ induces a preradical on $\X$ as $r=ker(\eta)$, so we have $\T_L= \T_r$.

\begin{coro} The torsion theory $(\T_L, \F_L)$ induced by a localization $L \dashv i$ of a semi-abelian category $\X$ is hereditary.
\end{coro}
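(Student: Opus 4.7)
The strategy is to verify the definition of a hereditary torsion theory directly: show that the torsion class $\T_L$ is closed under subobjects. Since $\T_L = \{X \mid \eta_X = 0\}$ by Theorem \ref{loctt}, and since the naturality square of $\eta$ is available on every morphism, the whole argument should reduce to a short diagram chase exploiting that $L$ preserves finite limits.

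First I would fix a monomorphism $m\colon X \to Y$ with $Y \in \T_L$, so that $\eta_Y = 0$. From the naturality square of $\eta$ at $m$ one reads off $IL(m) \circ \eta_X = \eta_Y \circ m = 0$. Since $L$, being a localization, preserves finite limits and in particular monomorphisms, and since $I$ is fully faithful, the composite $IL(m)$ is monic. Cancelling it yields $\eta_X = 0$, hence $X \in \T_L$, as desired.

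I do not anticipate a real obstacle: the limit-preservation of $L$ is exactly what the argument needs, and the semi-abelian hypothesis intervenes only in ensuring that the data is packaged as a torsion theory, which is already done in Theorem \ref{loctt}. As an alternative route one could show that the preradical $r = \ker(\eta)$ mentioned in the remark preceding the corollary is hereditary, by checking that for any monomorphism $f\colon X \to Y$ the naturality square (\ref{prds}) is a pullback; the argument is essentially the same chase (any element of $X$ mapped by $f$ into $r(Y)$ lies in $r(X)$ because $IL(f)$ is monic), and heredity of the torsion theory then follows from the bijection with hereditary radicals recalled in Section 2.
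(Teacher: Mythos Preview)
Your proof is correct and follows essentially the same idea as the paper: both arguments use that $L$ preserves monomorphisms (since it preserves finite limits) to conclude that the torsion class is closed under subobjects. The paper phrases this via the characterisation $\T_L=\{X\mid L(X)\cong 0\}$ (a mono into $0$ forces $L(S)\cong 0$), while you use the equivalent description $\T_L=\{X\mid \eta_X=0\}$ and cancel the mono $IL(m)$; one small remark is that the reason $I$ preserves monomorphisms is that it is a right adjoint, not merely that it is fully faithful.
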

\begin{proof} Since $L$  preserves finite limits it preserves monomorphisms so if $m: S \to X$ with $L(X)=0$ then $L(S)=0$. 
\end{proof}

\begin{lemma} Let $(\T, \F)$ be a hereditary torsion theory in a semi-abelian category $\X$. Then $\T$ is closed under finite limits in $\X$. In particular, $\T$ is closed under kernel pairs of morphisms in $\T$.
\end{lemma}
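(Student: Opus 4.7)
The plan is to reduce closure under finite limits to three separate closure conditions: the terminal object, binary products, and equalizers. The terminal object of $\X$ is the zero object $0$, and $0$ belongs to $\T$ (it lies in every torsion subcategory, as it is both torsion and torsion-free). So that piece is immediate, and the work is concentrated in binary products and equalizers.

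For equalizers, I would use the hereditary hypothesis directly. Given parallel arrows $f,g: A \to B$ with $A$ in $\T$, the equalizer $e: E \to A$ is a monomorphism in $\X$ with codomain $A \in \T$, so by hereditariness (which says $\T$ is closed under subobjects in $\X$) we conclude $E \in \T$. The codomain $B$ of $f,g$ does not need to be in $\T$ for this step, so this argument actually gives slightly more than stated.

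For binary products, I would invoke the fact recalled after the definition of torsion theory: $\T$ is closed under extensions in $\X$. Given $A, B \in \T$, the product $A \times B$ fits into the canonical short exact sequence
\[\begin{tikzcd} 0 \ar[r] & A \ar[r, "{(1_A,0)}"] & A\times B \ar[r, "\pi_B"] & B \ar[r] & 0 \end{tikzcd}\]
where $\pi_B$ is split (by the section $(0,1_B)$), hence a normal epimorphism in the semi-abelian category $\X$, with kernel $(1_A,0)$. Since both $A$ and $B$ belong to $\T$, closure under extensions yields $A \times B \in \T$. Combining this with closure under the terminal object and equalizers gives closure under all finite limits via the standard construction of a finite limit as an equalizer of a pair of maps between products of the diagram's objects.

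The last sentence about kernel pairs is then a specialization: given a morphism $f: A \to B$ with $A, B$ in $\T$, the kernel pair $A\times_B A$ is a finite limit of a diagram whose objects ($A, A, B$) all lie in $\T$, so it lies in $\T$. Alternatively, $A\times_B A$ embeds as a subobject of $A\times A \in \T$, and hereditariness finishes it. I do not anticipate a genuine obstacle; the only point that requires mild care is verifying that the product sequence above is a short exact sequence in the semi-abelian sense, which follows because split epimorphisms in a semi-abelian category are normal (being regular) and their kernels are computed as in any pointed category with finite products.
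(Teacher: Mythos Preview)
Your proof is correct. Both your argument and the paper's rely on the same two ingredients---hereditariness (closure under subobjects) and closure under extensions---but they organize the reduction differently. You decompose finite limits as terminal object $+$ binary products $+$ equalizers: equalizers fall to hereditariness immediately, and products are handled via the split short exact sequence $0 \to A \to A\times B \to B \to 0$ together with closure under extensions. The paper instead decomposes finite limits as terminal object $+$ pullbacks, and treats a general pullback $P = A \times_B C$ in one stroke via the short exact sequence $0 \to \ker(p_1) \to P \to p_1(P) \to 0$, observing that $\ker(p_1) \cong \ker(f)$ is a subobject of $A$ and $p_1(P)$ is a subobject of $C$, so both ends are torsion by hereditariness and $P$ is torsion by extension-closure. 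Your route is perhaps marginally more elementary (the product sequence is split, so exactness is transparent), while the paper's single pullback argument is more compact and makes the kernel-pair consequence immediate without a separate remark.
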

\begin{proof}
Since $\T$ is closed under kernels of arrows in $\T,$ we only need to prove that $\T$ is closed under pullbacks of morphisms in $\T$. Consider the commutative diagram
\[\begin{tikzcd} ker(p_1)\ar[r]\ar[d, "p_0'"] & P \ar[r, "p_1"]\ar[d, "p_0"] & C\ar[d, "g"] \\ker(f)\ar[r] & A\ar[r, "f"] & B\, . \end{tikzcd} \] 
with $f$ and $g$ morphisms in $\T$. Since $P$ is a pullback then $ker(p_1)\cong ker(f)$. Now, since $\T$ is closed under subobjects, if $A$ is torsion then $ker(f)$ is also torsion. Consider $e,m$ the normal epi/mono factorization of $p_1$ and the short exact sequence:
\[\begin{tikzcd} 0\ar[r]&ker(p_1)\ar[r] & P\ar[r, "e"]& p_1(P)\ar[r]&0 \end{tikzcd}.\]
Then $p_1(P)$ is a subobject of $C$ so it is torsion, and finally, since $\T$ is closed under extension $P$ is torsion.
\end{proof}

A \textit{quasi-hereditary} torsion theory $(\T, \F)$ in $\X$ is a torsion theory such that $\T$ is closed under regular subobjects, i.e. if $e: X \to T$ is an equalizer with $T$ torsion then $X$ is torsion. In \cite{GrnRss07} quasi-hereditary torsion theories are studied in homological categories.

\begin{teo}\label{grnrss} \cite{GrnRss07} Let $(\T, \F)$ be a torsion theory in a homological category $\X$. The following are equivalent:
\begin{enumerate}
\item $(\T,\F)$ is quasi-hereditary.
\item The associated idempotent radical $t: \X  \to \X$ preserves finite limits.
\item The associated idempotent radical $t: \X \to \X$ preserves equalizers.
\item For every regular subobject $e: E \to A$ in $\X$ then $F(e)$ is a monomorphism in $\F$. 
\end{enumerate}
\end{teo}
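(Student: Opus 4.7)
I would prove the chain $(2) \Rightarrow (3) \Rightarrow (1) \Rightarrow (2)$ and separately the equivalence $(1) \Leftrightarrow (4)$. The direction $(2) \Rightarrow (3)$ is immediate. The easy half $(4) \Rightarrow (1)$ is also short: if $e\colon E \to T$ is a regular monomorphism with $T \in \T$, then $F(T) \cong 0$, so the assumed monomorphism $F(e)\colon F(E) \to F(T) = 0$ forces $F(E) = 0$ and hence $E \in \T$.

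For $(3) \Rightarrow (1)$ I would take $e\colon E \to T$ arising as the equalizer of $f, g\colon T \to A$ with $T \in \T$. By hypothesis, $t(e)\colon t(E) \to t(T)$ is the equalizer of $t(f), t(g)\colon t(T) \to t(A)$, and $\sigma_T$ is an isomorphism. Using the naturality identities $\sigma_A\, t(f) = f \sigma_T$ and $\sigma_A\, t(g) = g\sigma_T$ together with $\sigma_A$ being monic, the relation $fe = ge$ translates into $\sigma_T^{-1} e\colon E \to t(T)$ equalizing $t(f), t(g)$; this produces a unique $u\colon E \to t(E)$ with $t(e)\, u = \sigma_T^{-1} e$. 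Applying the naturality square $\sigma_T\, t(e) = e\, \sigma_E$ then gives $e = e\, \sigma_E\, u$, and since $e$ is monic we deduce $\sigma_E\, u = \mathrm{id}_E$; because $\sigma_E$ is also monic, this upgrades to $u = \sigma_E^{-1}$, so $E \in \T$.

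For $(1) \Rightarrow (2)$, recall first that $\T$ is always closed under finite products in any torsion theory: extension-closure applied to $0 \to T_1 \to T_1 \times T_2 \to T_2 \to 0$ does the job. Under (1), $\T$ is additionally closed under regular subobjects, and since every finite limit in $\X$ can be realized as an equalizer inside a finite product, $\T$ is closed under finite limits of $\X$. A standard cone-comparison argument using the coreflection $J \dashv T$ then shows that the canonical morphism $t(\lim D) \to \lim (t \circ D)$ is an isomorphism for every finite diagram $D$, both objects lying in $\T$ and admitting mutual factorisations through $\lim D$.

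The main obstacle is $(1) \Rightarrow (4)$. Given a regular monomorphism $e\colon E \to A$ equalizing $f, g\colon A \to B$, I would form the pullback $P = E \times_A t(A)$. A direct check using the universal property of equalizers identifies $P$ with the equalizer in $\X$ of $f\sigma_A, g\sigma_A\colon t(A) \to B$; hence $P$ is a regular subobject of $t(A) \in \T$, and by (1) we have $P \in \T$. The coreflection and the pullback square then produce mutually inverse morphisms $t(E) \rightleftarrows P$, so that the left square of
\[\begin{tikzcd} 0 \ar[r] & t(E) \ar[r]\ar[d] & E \ar[r]\ar[d, "e"] & F(E) \ar[r] \ar[d, "F(e)"] & 0 \\ 0 \ar[r] & t(A) \ar[r] & A \ar[r] & F(A) \ar[r] & 0 \end{tikzcd}\]
is a pullback. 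It remains to invoke the semi-abelian fact that in such a diagram with exact rows and monic middle column, the right column is monic if and only if the left square is a pullback. This last step is the most delicate and rests on protomodularity; it can be extracted from Lemma \ref{n3t} and Proposition \ref{iddet} by identifying $ker(F(e))$ with the quotient $e^{-1}(t(A))/t(E)$, which vanishes precisely under the pullback condition just established.
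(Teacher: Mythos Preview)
The paper does not prove this theorem: it is stated with the citation \cite{GrnRss07} and no proof environment follows, so there is no argument in the paper to compare against. Your proposal is therefore not reproducing the paper's approach but supplying a proof where the paper simply defers to the literature.

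That said, your argument is essentially sound. The implications $(2)\Rightarrow(3)$, $(4)\Rightarrow(1)$, and $(3)\Rightarrow(1)$ are handled cleanly. For $(1)\Rightarrow(2)$, your reduction via closure of $\T$ under finite products (extension closure) and regular subobjects, together with the coreflection, is the standard route; just be explicit that in $\X$ every finite limit is a regular subobject of a finite product, so closure under these two operations suffices. For $(1)\Rightarrow(4)$, the identification of the pullback $P=E\times_A t(A)$ with the equalizer of $f\sigma_A,g\sigma_A$ is correct, and the coreflection gives $P\cong t(E)$ as you describe. The concluding step is best phrased directly rather than through Lemma~\ref{n3t} and Proposition~\ref{iddet}: since $\eta_A e = F(e)\,\eta_E$ and $\eta_E$ is a normal epimorphism, $F(e)$ is monic iff $\ker(\eta_A e)=\ker(\eta_E)=t(E)$; but $\ker(\eta_A e)=e^{-1}(t(A))=P\cong t(E)$, so you are done. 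One caveat: the statement is for \emph{homological} categories, while you invoke ``semi-abelian'' facts and the paper's Lemma~\ref{n3t} and Proposition~\ref{iddet}, which are stated there under the semi-abelian hypothesis. These particular ingredients (regular epi $=$ cokernel of its kernel, and the kernel computation above) are valid in any homological category, so the proof goes through, but you should adjust the justification accordingly.
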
 

\begin{lemma}\label{tthexac} Let $(\T, \F)$ be a torsion theory in a semi-abelian category $\X$. Then $\T$ is an exact category.
\end{lemma}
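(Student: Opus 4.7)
The aim is to verify Barr's axioms for $\T$: finite limits, coequalizers of kernel pairs, pullback-stable regular-epi/mono factorizations, and effectiveness of every equivalence relation. I will first set up the basic categorical infrastructure of $\T$, then handle effectiveness, and finally address the regularity obstructions.

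Since the coreflector $T:\X\to\T$ is right adjoint to the fully faithful inclusion $J:\T\hookrightarrow\X$, the subcategory $\T$ has all finite limits, computed by applying $T$ to the corresponding limit in $\X$. Dually, $J$ being a left adjoint, colimits in $\T$ coincide with colimits of the same diagrams in $\X$; closure of $\T$ under quotients and extensions ensures that the relevant finite colimits lie in $\T$, yielding in particular coequalizers of kernel pairs.

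The central step is to show that every internal equivalence relation $R\rightrightarrows X$ in $\T$ is effective. My strategy is to observe that $R\rightrightarrows X$ is already a relation in $\X$: the mono $R\hookrightarrow X\times_{\T} X = T(X\times X)$ composed with the counit $T(X\times X)\to X\times X$ (a normal monomorphism because $\T$ is normal mono-coreflective) gives a monomorphism $(d_0,d_1):R\hookrightarrow X\times X$ in $\X$, and the reflexivity $X\to R$ is preserved verbatim by the fully faithful $J$. Thus $R\rightrightarrows X$ is a reflexive relation in $\X$; since $\X$ is semi-abelian, hence protomodular, hence Mal'cev, $R$ is automatically an equivalence relation in $\X$. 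By Barr-exactness of $\X$, $R$ is the kernel pair of its coequalizer $q:X\to X/R$ in $\X$; closure of $\T$ under quotients places $X/R$ in $\T$, so $q$ is also the coequalizer of $R\rightrightarrows X$ in $\T$ (colimits being inherited), and its kernel pair there is $T(X\times_{X/R} X)=T(R)=R$ because $R\in\T$. Hence $R$ is effective in $\T$.

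For regularity, the normal-epi/mono factorization of any $f:X\to Y$ in $\T$ inherited from $\X$ stays in $\T$ since the image of a torsion object under a normal epi is again torsion. Combined with the Mal'cev property of $\T$ (inherited by the same counit argument applied to reflexive relations) and the effectiveness established above, one derives pullback-stability of regular epimorphisms. The main delicate point is this last step, since pullbacks in $\T$ are computed by applying $T$ to pullbacks in $\X$, and this could a priori spoil the projection being a regular epi; the careful argument has to exploit how $T$ interacts with normal monomorphisms and the closure of $\T$ under quotients to recover stability.
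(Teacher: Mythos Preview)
Your plan follows the paper's overall shape---finite limits via the coreflection, colimits via the left-adjoint inclusion $J$, effectiveness inherited from $\X$---and your treatment of effectiveness (embedding $R\rightrightarrows X$ as a reflexive relation in $\X$, invoking the Mal'cev property there, then using Barr-exactness of $\X$ and closure of $\T$ under quotients) is essentially the paper's final paragraph, spelled out in more detail.

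The genuine gap is pullback stability of regular epimorphisms. You assert that one ``derives pullback-stability'' from the Mal'cev property of $\T$ together with effectiveness, but there is no such implication: Barr-exactness is regularity \emph{plus} effectiveness of equivalence relations, and pullback-stability of regular epis is precisely the regularity ingredient that does not follow from the others. Your closing sentences correctly identify the obstruction---pullbacks in $\T$ are $T$ applied to pullbacks in $\X$---but then stop short of an argument.

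The paper handles this by a different route. Its key intermediate step (which you do not have) is that for an arrow $q$ of $\T$, being a regular epimorphism in $\T$ is equivalent to being one in $\X$: the forward direction holds since $J$ preserves colimits; for the converse, $q$ is the coequalizer in $\X$ of its kernel pair $Eq(q)$, and the paper uses that $Eq(q)\in\T$ (closure of $\T$ under kernel pairs computed in $\X$), so $q$ is already a coequalizer of a parallel pair lying in $\T$. With this equivalence in hand, and with $J$ preserving the relevant pullbacks, a pullback square in $\T$ with a regular-epi leg is a pullback square in $\X$ with a regular-epi leg; regularity of $\X$ then makes the pulled-back leg a regular epi in $\X$, hence in $\T$. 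You should replace the Mal'cev-plus-effectiveness hand-wave with this two-step reduction.
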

\begin{proof}
We will first prove that an arrow $q$ in $\T$ is a regular epimorphism in $\X$ if and only if it is a regular epimorphism in $\T$. Clearly, if $q$ is a regular epimorphism in $\T$ and the inclusion $J: \T \to \X$ preserves colimits then $q$ is a regular epimorphism in $\X$. Now, if $q$ is a regular epimorphism in $\X$ it is a coequalizer of its kernel pair $K[q]$ in $\X$, and since $\T$ is closed under kernel pairs in $\X$ we have the isomorphism $Eq(q)\cong t(Eq(q))$, so $q$ is a regular epimorphism in $\T$. 

To prove pullback stability  of regular epimorphism consider the pullback diagram in $\T$:
\[\begin{tikzcd} P\ar[d, "p'"']\ar[r] & X\ar[d,"p"] \\ A\ar[r] & B  \end{tikzcd} \]
with $p$ a regular epimorphism in $\T$. Since the inclusion $i:\T \to \X$ preserves pullbacks and quotients, we have that $p$ is a regular epimorphism in $\X$, and so is $p'$ in $\T$. 

Finally, since $\T$ is closed under quotients and $\X$ is an exact category, any equivalence relation in $\T$ must be effective and $\T$ is an exact category.  
\end{proof}

\begin{teo}\label{hersemiab} Let $(\T, \F)$ be a hereditary torsion theory in a semi-abelian category $\X$. Then $\T$ is a semi-abelian category.
\end{teo}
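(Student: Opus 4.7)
My plan is to verify the four defining conditions of a semi-abelian category for $\T$: pointed, Barr-exact, Bourn-protomodular, and admitting binary coproducts. Three of these conditions hold for every torsion subcategory; only protomodularity will require the hereditarity hypothesis, which enters through the preceding lemma on closure of $\T$ under finite limits.

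For the easy axioms: the zero object of $\X$ belongs to $\T$ (indeed to both $\T$ and $\F$), so $\T$ is pointed; and Barr-exactness is precisely the content of Lemma \ref{tthexac}. For binary coproducts, I will show that the $\X$-coproduct of two torsion objects is again torsion, so that the ambient coproduct serves as the coproduct in $\T$. Given $A, B \in \T$, consider the unit $\eta \colon A +_\X B \to F_{A +_\X B}$ of the torsion-free reflection. Each composite $A \to A +_\X B \xrightarrow{\eta} F_{A +_\X B}$ and $B \to A +_\X B \xrightarrow{\eta} F_{A +_\X B}$ is a morphism from a torsion object to a torsion-free object, hence vanishes by TT1; by the universal property of the coproduct, $\eta$ itself is the zero morphism. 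Since $\eta$ is a normal epimorphism this forces $F_{A +_\X B} = 0$, so the short exact sequence (\ref{sestt}) for $A+_\X B$ collapses and $A +_\X B \in \T$.

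The main work lies in protomodularity, and this is where hereditarity is used. The preceding lemma guarantees $\T$ is closed under finite limits in $\X$, so the full inclusion $J \colon \T \to \X$ preserves finite limits. I will verify the split short five lemma, which characterises Bourn protomodularity in a pointed Barr-exact setting. Given a morphism of split short exact sequences in $\T$ whose outer vertical maps are isomorphisms, I transport the diagram to $\X$: since $J$ preserves kernels, and since in the semi-abelian $\X$ every split epimorphism is the cokernel of its kernel, the rows remain split short exact sequences in $\X$. Protomodularity of $\X$ then forces the middle vertical morphism to be an isomorphism in $\X$, and fullness of $\T$ in $\X$ transports this back to $\T$. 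The only subtle point, namely that split short exact sequences in $\T$ really restrict to ones in $\X$, is exactly what limit-preservation of $J$ (granted by hereditarity) ensures; I anticipate no further obstacle.
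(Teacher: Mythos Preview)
Your proof is correct and follows essentially the same route as the paper's: pointedness and exactness from Lemma~\ref{tthexac}, coproducts from coreflectivity (you spell out the direct argument, the paper just invokes cocompleteness of coreflective subcategories), and protomodularity from the fact that hereditarity makes $J$ preserve finite limits, hence split short exact sequences. The only cosmetic difference is that the paper cites Theorem~\ref{grnrss} (the quasi-hereditary characterisation) for limit-preservation of $J$, whereas you invoke the preceding lemma on closure under finite limits directly; both are valid and yield the same conclusion.
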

\begin{proof} First since $\T$ is coreflective in $\X$ it is complete and cocomplete as well as pointed. By Lemma \ref{tthexac} $\T$ is exact. Finally, by Theorem \ref{grnrss} the full inclusion $J: \T \to \X$ preserve finite limits and hence short split exact sequences, so if $\X$ is protomodular then so is $\T$.   
\end{proof}

This theorem admits a dual version. A Birkhoff subcategory  $\mathbb{A}$ of a regular category $\X$, is a full regular epi-reflective subcategory that is closed under subobjects and quotients in $\X$. It is known that if $\X$ semi-abelian then so is any Birkhoff subcategory $\mathbb{A}$. 

\begin{coro}\label{cohersemi} Let $(\T, \F)$ be a cohereditary torsion theory in a semi-abelian category $\X$. Then $\F$ is a semi-abelian category.
\end{coro}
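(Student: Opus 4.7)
The plan is to avoid redoing the work of Theorem \ref{hersemiab} in dual form and instead invoke the remark stated immediately before the corollary: every Birkhoff subcategory of a semi-abelian category is itself semi-abelian. So the whole strategy reduces to verifying that, under the cohereditary hypothesis, the torsion-free subcategory $\F$ is a Birkhoff subcategory of $\X$.

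To carry this out I would check the three defining conditions in turn. First, fullness and regular epi-reflectivity: $\F$ is full by assumption, and it is a normal epi-reflective subcategory of $\X$ since in any torsion theory each component $\eta_X$ of the unit of $F \dashv I$ in (\ref{ttadj}) is a normal epimorphism; this is stronger than regular epi-reflectivity, as normal epimorphisms are regular. Second, closure under subobjects: this is the general fact, recorded after Theorem \ref{tfnormal}, that in \emph{any} torsion theory the torsion-free part is closed under subobjects in $\X$. Third, closure under regular quotients: this is precisely the cohereditary hypothesis on $(\T, \F)$.

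With these three points in hand, $\F$ satisfies the definition of a Birkhoff subcategory of $\X$, and the cited result about Birkhoff subcategories of semi-abelian categories yields that $\F$ is semi-abelian. I do not expect any serious obstacle: the one step worth pausing on is the passage from normal epi-reflective to regular epi-reflective, but this is immediate in any pointed category and so poses no difficulty in a semi-abelian $\X$.
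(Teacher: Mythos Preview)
Your proposal is correct and matches the paper's own proof: the paper also observes that the cohereditary hypothesis makes $\F$ closed under quotients, hence a Birkhoff subcategory of $\X$, and then appeals to the fact that Birkhoff subcategories of semi-abelian categories are semi-abelian. The only (cosmetic) slip is the cross-reference: the closure of $\F$ under subobjects is recalled in the paragraph \emph{before} Theorem~\ref{tfnormal}, not after it.
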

\begin{proof}
If $(\T, \F)$ is cohereditary, $\F$ is closed under quotients in $\X$, so $\F$ is a Birkhoff subcategory of $\X$.
\end{proof}

\section{Torsion theories in chain complexes}

Throughout this section $\X$ will denote a semi-abelian category.

A chain complex $M$ in $\X$ is a family of morphisms $\{\delta_i: M_i \to M_{i-1}\}_{i \in \mathbb{Z}}$ with the condition $\delta_i\delta_{i+1}=0$ for all $i$. A morphism of chain complexes $f: M \to N$ is a family of morphisms $f_i:M_i \to N_i$ such that $f_{i-1}\delta_i=\delta_if_{i}$ for all $i$. For a chain complex $M$ and for each $i$ we will write $e_i$ and $m_i$ for the normal epi/mono factorization of each $\delta_i$:
\[\begin{tikzcd}  M_i\ar[rr, "\delta_i"]\ar[rd, "e_i"] & & M_{i-1}\\ & \delta_i(M_i)\ar[ru, "m_i"] & \end{tikzcd}\]
And we call a chain complex $M$ \textit{proper} if each $\delta_i$ is a proper morphism i.e. each $m_i$ is a normal monomorphism for each $i$.
We will write $\chn$ for the category of chain complexes in $\X$ and $\pch$ for the subcategory of proper chain complexes. In \cite{EveLin04} it is noticed that since $\X$ is  a semi-abelian category then $\chn$ is also semi-abelian, but this is not the case for $\pch$, since it may not have kernels. However, $\pch$ does have cokernels as follows.

\begin{lemma} The category $\pch$ has cokernels and they are computed as in $\chn$.
\end{lemma}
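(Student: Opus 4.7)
The plan is to compute the cokernel $q : N \to Q$ of $f : M \to N$ degreewise in $\chn$ and show that $Q$ is automatically proper; this suffices because fullness of $\pch \hookrightarrow \chn$ then transports the universal property back to $\pch$. Since $\chn$ is semi-abelian, set $q_i : N_i \to Q_i$ to be the cokernel of $f_i$ in $\X$, and let $\bar\delta_i : Q_i \to Q_{i-1}$ be induced from $q_{i-1} \delta_i : N_i \to Q_{i-1}$ by the universal property of $q_i$ (this is legitimate because the chain-map condition for $f$ together with $q_{i-1} f_{i-1} = 0$ kills $q_{i-1}\delta_i f_i$). One checks routinely that $\bar\delta_i \bar\delta_{i+1} = 0$, giving a chain complex $Q$ and a chain map $q : N \to Q$ that is a cokernel in $\chn$. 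If we can show $Q \in \pch$, then for any $g : N \to P$ in $\pch$ with $gf = 0$ the factorization through $q$ that exists in $\chn$ has codomain the proper complex $P$ and so lands in $\pch$; hence $q$ is a cokernel in $\pch$ as well.

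To prove that $\bar\delta_i$ is proper for each $i$, I would write $\delta_i = m_i e_i$ for the normal epi-mono factorization of the boundary on $N$, with $m_i$ a normal monomorphism since $N \in \pch$. The identity $\bar\delta_i q_i = q_{i-1} m_i e_i$, combined with the fact that $q_i$ and $e_i$ are regular (equivalently normal) epimorphisms, shows that the image of $\bar\delta_i$ coincides with the image of the composite $q_{i-1} m_i$. Factor this composite as $q_{i-1} m_i = m' e'$ with $e'$ a regular epi and $m'$ a monomorphism; the question then becomes whether $m' : \mathrm{im}(q_{i-1} m_i) \to Q_{i-1}$ is a normal monomorphism.

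This is exactly the hypothesis of Proposition \ref{iddet}. In the commutative square with top arrow $m_i$, left arrow $e'$, bottom arrow $m'$, and right arrow $q_{i-1}$, the top is a normal mono (properness of $N$), both verticals are normal epis ($q_{i-1}$ is a cokernel; $e'$ is a regular epi in a semi-abelian category), and the bottom is a monomorphism by construction. The proposition then forces $m'$ to be normal, so $\bar\delta_i$ is a proper morphism and $Q \in \pch$, as required. The only substantive step is the identification of $\mathrm{im}(\bar\delta_i)$ with $\mathrm{im}(q_{i-1} m_i)$ and the correct packaging of this square so that Proposition \ref{iddet} applies; after that the argument is mechanical, and no deeper obstacle arises.
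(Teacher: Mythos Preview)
Your proof is correct and follows essentially the same approach as the paper: compute the cokernel degreewise, then use Proposition~\ref{iddet} on the square built from the normal mono $m_i$, the cokernel map $q_{i-1}$, and the image factorization of their composite to conclude that the image of the induced differential is a normal subobject. The only cosmetic difference is that the paper obtains your $(e',m')$ via functoriality of the image factorization (producing a map $q'$ between the images of $\delta_i$ and $\delta_i'$) rather than by directly factoring $q_{i-1}m_i$, but the resulting square and the application of Proposition~\ref{iddet} are identical.
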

\begin{proof}  For a morphism $f:M\to N$ of proper chain complexes $(M,d)$, $(N,\delta)$ consider the commutative diagram for each $i$
\[\begin{tikzcd}
M_i  \ar[rr,"f_i"]\ar[dd, "d_i"]
 & 
  & N_i\ar[rr, "p"]\ar[rd,"e_i"]\ar[dd, "\delta_i"]
   &
    & cok(f_i)\ar[dd, "\delta_i'",near start] \ar[rd, "e_i'"] &
\\ & & & \delta_i(N_i) \ar[ld, "m_i"] \ar[rr, "q'",near start,  crossing over] & & \delta_i'(cok(f_i))\ar[dl, "m_i'"]
\\
M_{i-1}\ar[rr, "f_{i-1}"]
 & 
  & N_{i-1} \ar[rr, "q"]
   &
    & cok(f_{i-1}) &
\end{tikzcd} \]
where $\delta_i'$ is induced by universal property of the cokernel $p$ and $m_i$, $e_i$ and $m_i'$, $e_i'$ are the normal epi-mono image factorizations of $\delta_i$ and $\delta_i'$ respectively. Now, since taking images is functorial we have $q'$ such that $q'e_i=e_i'p$ and $m_i'q'=qm_i$, then $q'$ is a normal epimorphism since $p$ and $e_i'$ are also normal epimorphisms. Finally, by Proposition \ref{iddet} if $m_i$ is a normal monomorphism  and $m_i'$ is a monomorphism, then $m_i'$ is a normal monomorphism. So, $\delta_i'$ is a proper morphism and it is the cokernel of $f$ in $\pch$.
\end{proof}

By a short exact sequence in $\pch$ we mean a short exact sequence in $\chn$ such that every object is a proper chain complex. We will introduce well-known functors in very different settings of algebraic topology that still makes sense in our context, we will follow the terminology of \cite{BrHiSi10}.

\begin{strc}\label{adjun} Let $\chn_{n \geq}$ be the category of $n$-truncated (above) chain complexes, i.e. chain complexes defined for degrees $n \geq i$ for a fixed $n\in \mathbb{Z}$. We can identify $\chn_{n \geq}$ with the full subcategory of $\chn$ of chain complexes with $M_i=0$ for $i>n$. Actually, we have the functors:
\begin{itemize}
\item  $\mathbf{tr}_n:\begin{tikzcd} \chn\ar[r] & \chn_{n \geq }\end{tikzcd}$ is the canonical (above) truncation:
 \[ \mathbf{tr}_n(M) = \begin{tikzcd}
M_n\ar[r] & M_{n-1}\ar[r] & M_{n-2}\ar[r] & \hdots
\end{tikzcd} \]
\item $\mathbf{sk}_n:\begin{tikzcd} \chn_{n \geq } \ar[r] & \chn\end{tikzcd}$ is the canonical inclusion or skeleton functor:
\[ \mathbf{sk}_n(M)= \begin{tikzcd}
\hdots\ar[r] &0\ar[r] & 0\ar[r] & M_n\ar[r] & M_{n-1}\ar[r] & \hdots
\end{tikzcd} \]
\item $\mathbf{cosk}_n:\begin{tikzcd} \chn_{n \geq }\ar[r] & \chn \end{tikzcd}$  the coskeleton functor is given by:
\[\mathbf{cosk}_n(M)= \begin{tikzcd}
\hdots\ar[r] & 0\ar[r] & Ker(\delta_n) \ar[r, "ker(\delta_n)"] & M_n\ar[r, "\delta_n"] & M_{n-1}\ar[r] &\hdots
\end{tikzcd} \]
\item  $\mathbf{cot}_n: \begin{tikzcd} \chn \ar[r] & \chn_{n \geq } \end{tikzcd}$ the (above) cotruncation functor:
\[\mathbf{cot}_n(M) =\begin{tikzcd}
Cok(\delta_{n+1})\ar[r, "\delta_n'"] & M_{n-1}\ar[r] & M_{n-2}\ar[r] & \hdots
\end{tikzcd}, \]
where $\delta'_n$ is induced by $\delta_n: M_n \to M_{n-1}$ and the universal property of $cok(\delta_{n+1})$.
\end{itemize}
This functors give a string of adjunctions:
\[ \mathbf{cot}_n \dashv \mathbf{sk}_n \dashv \mathbf{tr}_n \dashv \mathbf{cosk}_n:  
\begin{tikzcd}[row sep=large] 
\chn 
\ar[d, bend left, shift right=.5] \ar[d, bend right, shift right=4] \ar[d, phantom, "\dashv"] \ar[d,phantom,"\dashv", shift right=5.5 ] \ar[d,phantom, "\dashv", shift left=5.5 ] \\
\chn_{n \geq }
\ar[u, bend left, shift right=.5] \ar[u, bend right, shift right=4]
\end{tikzcd} .\] 
We will write $\mathbf{Sk}_n=\mathbf{sk}_n\mathbf{tr}_n$, $\mathbf{Cosk}_n=\mathbf{cosk}_n\mathbf{tr}_n$ and $\mathbf{Cot}_n=\mathbf{sk}_n\mathbf{cot}_n$.
\end{strc}

Notice that for abelian categories the cotruncation functor above is exactly L. Illusie's truncation functor in \cite{ill71}.

\begin{lemma} The category $\chn_{n-1 \geq}$ is a normal epireflective subcategory of $\chn$ with the adjunction $\mathbf{cot}_{n-1}\dashv \mathbf{sk}_{n-1}$. Moreover, the category $\chn_{n-1 \geq}$ is closed under extensions in $\chn$.  
\end{lemma}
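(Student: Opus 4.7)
The plan is to unpack the adjunction $\mathbf{cot}_{n-1}\dashv \mathbf{sk}_{n-1}$ componentwise and reduce both assertions to facts in $\X$. Recall first that since $\X$ is semi-abelian, so is $\chn$, and a morphism of chain complexes is a normal epimorphism (respectively, part of a short exact sequence) iff it is so degreewise. This is the engine that drives both claims.

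For the normal epireflectivity, I first describe the unit $\eta_M\colon M \to \mathbf{Cot}_{n-1}(M)$ explicitly: in degrees $i \geq n$ it is the zero map $M_i \to 0$, in degree $n-1$ it is the canonical quotient $M_{n-1}\to \mathrm{Cok}(\delta_n)$, and in degrees $i \leq n-2$ it is the identity on $M_i$. Compatibility with the differentials is immediate from the construction of $\delta_n'$ via the universal property of $\mathrm{cok}(\delta_n)$ in \ref{adjun}. Each component of $\eta_M$ is a normal epimorphism in $\X$: identities are cokernels of zero morphisms; the zero map $M_i \to 0$ is the cokernel of $\mathrm{id}_{M_i}$; and $M_{n-1} \to \mathrm{Cok}(\delta_n)$ is by definition a cokernel. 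Hence $\eta_M$ is a normal epimorphism in $\chn$, so $\chn_{n-1 \geq}$ is normal epireflective. The adjunction unit $\eta$ also exhibits $\mathbf{sk}_{n-1}$ as fully faithful, so there is no ambiguity in identifying $\chn_{n-1 \geq}$ with its essential image.

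For the closure under extensions, suppose
\[\begin{tikzcd} 0\ar[r] & A\ar[r] & X\ar[r] & B\ar[r] & 0\end{tikzcd}\]
is a short exact sequence in $\chn$ with $A,B\in\chn_{n-1\geq}$, i.e.\ $A_i = B_i = 0$ for all $i\geq n$. Since short exact sequences in $\chn$ are degreewise, for each such $i$ we obtain
\[\begin{tikzcd} 0\ar[r] & 0\ar[r] & X_i\ar[r] & 0\ar[r] & 0\end{tikzcd},\]
which forces $X_i = 0$ (the right-hand map has kernel all of $X_i$, and this kernel equals $A_i = 0$). Therefore $X \in \chn_{n-1\geq}$.

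There is no real obstacle here: both parts reduce to degreewise observations in the semi-abelian category $\X$. The only point requiring a little care is verifying that the unit $\eta_M$ really is a morphism of chain complexes, i.e.\ that the induced differential $\delta_n'$ on $\mathrm{Cok}(\delta_n)$ is compatible with the quotient; this is exactly the universal property already invoked in \ref{adjun}.
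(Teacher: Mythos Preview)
Your proof is correct and follows essentially the same approach as the paper: you describe the unit $\eta_M$ componentwise, observe that each component is a normal epimorphism in $\X$ (hence $\eta_M$ is a normal epimorphism in $\chn$), and then use that short exact sequences in $\chn$ are degreewise to conclude closure under extensions. The paper's proof is the same argument, just presented more tersely with a single diagram for the unit and a one-line remark for extensions.
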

\begin{proof}
For a chain complex $M$ the unit $\eta_M$ of $\mathbf{cot}_{n-1} \dashv \mathbf{sk}_{n-1}$  is given by:
\[\begin{tikzcd} M= \ar[d, "\eta_M"] & \dots\ar[r]  & M_n\ar[r, "\delta_n"]\ar[d] & M_{n-1}\ar[r, "\delta_{n-1}"]\ar[d,"cok(\delta_n)"]& M_{n-2}\ar[r, "\delta_{n-2}"]\ar[d, "1"] & \dots
\\ 
\mathbf{Cot}_{n-1}(M)=&\dots\ar[r] & 0\ar[r] & Cok(\delta_n)\ar[r, "\delta'_{n-1}"] & M_{n-2}\ar[r, "\delta_{n-2}"] & \dots \end{tikzcd} \]
which is a component-wise normal epimorphism, and hence, $\eta_M$ is a normal epimorphism in $\chn$. 

Since a short exact sequence in $\chn$ is a component-wise short exact sequence  then it is clear that $\chn_{n-1 \geq}$ is closed under extension in $\chn$.
\end{proof}

\begin{strc}
In \cite{JT07} conditions are given for a normal epireflective subcategory closed under extensions to be a torsion-free category. Here, $\chn_{n-1 \geq}$ provides a counter-example of this situation, in the sense that $\chn_{n-1\geq}$ is a normal epireflective subcategory closed under extensions of $\chn$ that is not a torsion-free subcategory. Indeed, the functor $\mathbf{cot}_{n-1}$ is not normal. 

For example, we can consider the truncated case of the category $Arr(\X)$ of arrows in $\X$ and $\mathbf{cot}_0=cok: Arr(\X)\to \X$ and the dihedral group $D_4=\langle a,b \mid a^2=b^4=0, aba=b^{-1} \rangle$. Let $X=<a>\to D_4$ be the inclusion. Then $\eta_X$ is given by the vertical morphisms in the diagram
\[\begin{tikzcd}<a>\ar[r, "X"]\ar[d, "\eta_{X,1}"] & D_4\ar[d, "\eta_{X,0}"] \\ 0\ar[r] & D_4/<a,b^2>\end{tikzcd}\]
where $ker(\eta_X)$ is the inclusion $<a>\to <a,b^2>$ which does not have trivial kernel, so $\mathbf{cot}_0(ker(\eta_X))$ is not trivial. 
\end{strc}

However, we obtain a torsion theory when restricted to the case of proper chains and also $\mathbf{cot}_{n-1}$ will be a normal functor.

\begin{lemma}  For each $n\in \mathbb{Z}$ the adjunction $\mathbf{cot}_{n-1}\dashv \mathbf{sk}_{n-1}$ can be restricted to proper chains:
\[\mathbf{cot}_{n-1}\dashv \mathbf{sk}_{n-1}: \begin{tikzcd}\pch\ar[r, bend left]\ar[r, phantom, "\perp"] & \pch_{n-1 \geq}\ar[l, bend left] \end{tikzcd}. \] 
\end{lemma}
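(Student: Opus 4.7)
My plan is to exploit the fullness of the inclusions $\pch \hookrightarrow \chn$ and $\pch_{n-1 \geq} \hookrightarrow \chn_{n-1 \geq}$: once I verify that both functors preserve properness, the restricted hom-set bijection follows automatically from the ambient adjunction. The skeleton $\mathbf{sk}_{n-1}$ is immediate, since it only appends zero objects in degrees above $n-1$, so the sole new boundary is the zero morphism $0 \to M_{n-1}$, which is vacuously proper, while all other differentials are inherited unchanged. The entire content of the lemma therefore reduces to showing that $\mathbf{cot}_{n-1}$ sends $\pch$ to $\pch_{n-1 \geq}$.

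For a proper chain $(M, \delta)$, all differentials of $\mathbf{cot}_{n-1}(M)$ in degrees $\leq n-2$ coincide with those of $M$, hence remain proper. The only genuinely new boundary is
\[\delta'_{n-1}: Cok(\delta_n) \longrightarrow M_{n-2},\]
uniquely characterised by $\delta'_{n-1}\circ cok(\delta_n) = \delta_{n-1}$. My approach would be to compare it directly with the normal epi/mono factorisation $\delta_{n-1} = m_{n-1} e_{n-1}$ already provided by the properness of $M$. From $\delta_{n-1}\delta_n = 0$ together with the monicity of $m_{n-1}$, I obtain $e_{n-1}\delta_n = 0$, and the universal property of $cok(\delta_n)$ then supplies a unique arrow $\bar e_{n-1}: Cok(\delta_n) \to \delta_{n-1}(M_{n-1})$ with $\bar e_{n-1}\circ cok(\delta_n) = e_{n-1}$. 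Composing with $m_{n-1}$ and invoking the uniqueness clause of the cokernel's universal property yields $\delta'_{n-1} = m_{n-1}\bar e_{n-1}$.

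The key technical point, and the one I expect to be the main obstacle, is verifying that this is the normal epi/mono factorisation of $\delta'_{n-1}$: $m_{n-1}$ is still a normal monomorphism by hypothesis, and $\bar e_{n-1}$ is epic because the composite $\bar e_{n-1}\circ cok(\delta_n) = e_{n-1}$ is epic, but one needs to see it is a \emph{regular} (equivalently \emph{normal}, by the semi-abelian assumption on $\X$) epimorphism. This follows from the standard fact in regular categories that if a composite $fg$ is a regular epimorphism then so is its first factor $f$, applied to the factorisation $e_{n-1} = \bar e_{n-1}\circ cok(\delta_n)$. Granting this, $\delta'_{n-1}$ has image represented by $m_{n-1}$, a normal monomorphism, so $\delta'_{n-1}$ is proper and $\mathbf{cot}_{n-1}(M) \in \pch_{n-1 \geq}$.

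With both functors confirmed to land in the proper subcategories, the restriction of the adjunction is formal: the hom-bijection of $\mathbf{cot}_{n-1}\dashv \mathbf{sk}_{n-1}$ transports unchanged, and the unit and counit restrict because their components live in full subcategories that are stable under the functors involved.
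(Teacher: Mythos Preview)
Your proof is correct and follows essentially the same approach as the paper: both reduce the lemma to checking that the single new differential $\delta'_{n-1}$ is proper, and both do so by identifying its regular epi/mono factorisation with the mono part $m_{n-1}$ inherited from $\delta_{n-1}$. The only cosmetic difference is that you build the factorisation $\delta'_{n-1} = m_{n-1}\bar e_{n-1}$ directly, whereas the paper takes the image factorisation $(e'_{n-1}, m'_{n-1})$ of $\delta'_{n-1}$ and shows the induced comparison map $q'_n$ between the two images is both monic and regularly epic, hence an isomorphism; the underlying argument (in particular the use of ``$fg$ regular epi $\Rightarrow$ $f$ regular epi'') is identical.
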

\begin{proof}
It suffices to prove that the $(n-1)$-cotruncation of a proper chain complex $M$ is again proper. Indeed, consider the diagram
\[\begin{tikzcd} 
\dots\ar[r]&M_{n}\ar[r, "\delta_{n}"] & M_{n-1} \ar[rr, "\delta_{n-1}"] \ar[dd, "q_n", two heads] \ar[rd, two heads, "e_{n-1}"] & & M_{n-2} \ar[dd, "1"] \ar[rr, "\delta_{n-2}"] && \dots
\\ && & \delta_{n-1}(M_{n-1})\ar[dd, "q'_n", near start , crossing over] 
\ar[ru, "m_{n-1}"', hook] & & &
\\ \dots\ar[r] & 0\ar[r] & Cok(\delta_{n})\ar[rr, "\delta'_{n-1}", near start] \ar[rd, "e'_{n-1}"', two heads] & & M_{n-2}\ar[rr, "\delta_{n-2}"] && \dots
\\ & & &  \delta'_{n-1}(Cok(\delta_{n})) \ar[ru, "m'_{n-1}"'] &  && \end{tikzcd} \]
where $\delta'_{n-1}$ is induced by the cokernel $q_n=cok(\delta_{n})$ and consider $(e_{n-1},m_{n-1})$ and $(e'_{n-1}, m'_{n-1})$ the image factorizations of $\delta_{n-1}$ and $\delta'_{n-1}$. Since $m_{n-1}=m'_{n-1}q'_n$ is a monomosphism then $q'_n$ is both a monomorphism and a normal epimorphism, hence $q'_n$ is an isomorphism. Then $m'_{n-1}$ is a normal monomorphism.
\end{proof}

\begin{defi} We define the full subcategories in $\pch$ for each $n \in \Z$:
\[\mathcal{EP}_n=\{M \mid \mbox{$\delta_n$ is a normal epi and}\ M_i=0 \ \mbox{for}\ n-1>i \}.  \]
 And, similarly,
 \[\mathcal{MN}_n=\{M \mid \mbox{$\delta_n$ is a normal mono and}\ M_i=0 \ \mbox{for}\ i>n \}. \]
For instance, a proper chain complex $M$ in $\mathcal{EP}_n$ looks like this:
\[ \begin{tikzcd} \dots \ar[r]  & M_{n+1} \ar[r, "\delta_{n+1}"] & M_n\ar[r,two heads , "\delta_n"] & M_{n-1}\ar[r] & 0\ar[r] & 0\ar[r]& \dots \end{tikzcd} \]
with $\delta_n$ a normal epimorphism. Similarly, a proper chain complex $M$ in $\mathcal{NM}_n$ looks like this:
\[ \begin{tikzcd} \dots \ar[r] & 0\ar[r] & 0 \ar[r] & M_n\ar[r, hook,"\delta_n"] & M_{n-1}\ar[r, "\delta_{n-1}"] & M_{n-2}\ar[r] & \dots  \end{tikzcd} \]
where $\delta_n$ is a normal monomorphism.

In addition, we will also consider the full subcategory of $\chn$:
\[Ker(\mathbf{cot}_{n-1})=\{ M \mid \mathbf{cot}_{n-1}(M)\cong 0 \}.\]
\end{defi}

\begin{lemma}\label{lpch} Let $f:A \to B$ a morphism in $\X$. If $f$ is proper and has trivial cokernel, then $f$ is a normal epimorphism.
\end{lemma}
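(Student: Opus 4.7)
The plan is to use the proper normal epi / normal mono factorization of $f$ and then show the mono part is an isomorphism.

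First I would write $f = m_f \circ e_f$, where $e_f$ is the canonical normal epimorphism onto the image and $m_f : f(A) \to B$ is the image inclusion. By hypothesis $f$ is proper, so $m_f$ is a normal monomorphism, not merely a monomorphism. In a semi-abelian category every normal monomorphism is the kernel of its own cokernel, so $m_f = \ker(\mathrm{cok}(m_f))$.

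Next I would compare cokernels: since $e_f$ is a (normal) epimorphism, it is right-cancellable with respect to cokernels, and therefore $\mathrm{cok}(f) = \mathrm{cok}(m_f \circ e_f) = \mathrm{cok}(m_f)$. (Concretely: a morphism $g$ out of $B$ satisfies $g \circ f = 0$ iff $g \circ m_f \circ e_f = 0$, and since $e_f$ is an epimorphism this is equivalent to $g \circ m_f = 0$; hence $f$ and $m_f$ have the same cokernel by universal property.) The hypothesis gives $\mathrm{cok}(f) = 0$, so $\mathrm{cok}(m_f) = 0$ as well.

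Combining the two observations, $m_f$ is the kernel of a zero map $B \to 0$, which means $m_f$ is an isomorphism. Therefore $f = m_f \circ e_f$ is a normal epimorphism, being the composite of a normal epimorphism with an isomorphism.

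There is really no obstacle here; the only subtle point is the identification $\mathrm{cok}(f) = \mathrm{cok}(m_f)$, and this is automatic because $e_f$ is an epimorphism. Everything else is pure bookkeeping in the semi-abelian setting, using that in $\X$ normal monomorphisms are recovered as kernels of their cokernels.
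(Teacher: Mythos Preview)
Your proof is correct and follows essentially the same approach as the paper: both take the regular epi/normal mono factorization $f = m \circ e$, identify $\mathrm{cok}(m)$ with $\mathrm{cok}(f)$ (the paper does this implicitly by writing $q = \mathrm{cok}(f)$ and factoring $m$ through $\ker(q)$), use that a normal mono is the kernel of its cokernel, and conclude that $m$ is an isomorphism. Your version is slightly more streamlined in that you argue directly $m_f = \ker(\mathrm{cok}(m_f)) = \ker(0)$, whereas the paper introduces the intermediate factorization $m = k \circ m'$ through $\ker(q)$ and shows $m'$ and $k$ are each isomorphisms; but the underlying argument is identical.
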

\begin{proof} Consider $e, m$ the normal epi/mono factorization of $f$ and the diagram
\[ \begin{tikzcd} A\ar[r, "f"]\ar[d, "e"'] & B \ar[r, "q"] & Cok(f) \\ f(A)\ar[ru, "m"] \ar[r,"m'"'] & Ker(q) \ar[u, "k"'] &  \end{tikzcd}\]
where $m'$ is induced by the kernel $Ker(q)$. Then if $f$ is a proper morphism then $m'$ is an isomorphism since the normal monomorphism $m$ is the kernel of its cokernel $q=cok(f)$. Also, if $Cok(f)=0$ then $k$ is also an isomorphism. Finally, $m$ is an isomorphism and $f$ is a normal epimorphism.
\end{proof}

\begin{lemma} The restriction of the subcategory $Ker(\mathbf{cot}_{n-1})$ to proper chains is equivalent to $\mathcal{EP}_n$: 
\[Ker(\mathbf{cot}_{n-1})\cap \pch = \mathcal{EP}_n\, .\]
\end{lemma}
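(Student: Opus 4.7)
The plan is to unwind the definition of the cotruncation functor and apply Lemma \ref{lpch}. Recall that for a chain complex $M$,
\[\mathbf{cot}_{n-1}(M) = \begin{tikzcd}[column sep=small] Cok(\delta_n) \ar[r, "\delta'_{n-1}"] & M_{n-2} \ar[r, "\delta_{n-2}"] & M_{n-3} \ar[r] & \dots \end{tikzcd}\]
Hence $\mathbf{cot}_{n-1}(M) \cong 0$ is equivalent to $Cok(\delta_n) \cong 0$ together with $M_i = 0$ for all $i \leq n-2$.

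For the inclusion $Ker(\mathbf{cot}_{n-1}) \cap \pch \subseteq \mathcal{EP}_n$, suppose $M$ is a proper chain complex with $\mathbf{cot}_{n-1}(M) \cong 0$. Then $M_i = 0$ for $n-1 > i$, which is half of the defining condition of $\mathcal{EP}_n$. Moreover, since $M$ is proper, the morphism $\delta_n$ is proper, and the identity $Cok(\delta_n) \cong 0$ together with Lemma \ref{lpch} forces $\delta_n$ to be a normal epimorphism. Thus $M \in \mathcal{EP}_n$.

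For the reverse inclusion, let $M \in \mathcal{EP}_n$. By definition $M$ is proper, $M_i = 0$ for $n-1 > i$, and $\delta_n$ is a normal epimorphism. In particular $Cok(\delta_n) \cong 0$, so every component of $\mathbf{cot}_{n-1}(M)$ vanishes, giving $\mathbf{cot}_{n-1}(M) \cong 0$ and hence $M \in Ker(\mathbf{cot}_{n-1})$.

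The argument is essentially a routine unpacking of definitions; the only non-formal step is the passage from ``proper with trivial cokernel'' to ``normal epimorphism'', which is precisely Lemma \ref{lpch}. I do not anticipate a serious obstacle, but it is worth being careful about the indexing convention in the definition of $\mathbf{cot}_{n-1}$ (the shift by one in the cotruncation index is what makes $\mathcal{EP}_n$ the right category on the right-hand side rather than $\mathcal{EP}_{n-1}$).
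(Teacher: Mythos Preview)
Your proof is correct and follows essentially the same approach as the paper: both identify $Ker(\mathbf{cot}_{n-1})$ as the chain complexes with $M_i=0$ for $n-1>i$ and $Cok(\delta_n)=0$, and then invoke Lemma \ref{lpch} to pass from ``proper with trivial cokernel'' to ``normal epimorphism''. You simply spell out both inclusions and the indexing more explicitly than the paper's one-line proof.
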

\begin{proof} A chain complex $M$ in $Ker(\mathbf{cot}_{n-1})$ has all $M_i=0$ for $n-1>i$ and with the differential $\delta_n$ with trivial cokernel, so it follows from lemma \ref{lpch}.
\end{proof}

\begin{teo}\label{ttcok} For each $n\in \mathbb{Z}$ we have:
\begin{enumerate}
\item The pair $(Ker(\mathbf{cot}_{n-1}), \chn_{n-1 \geq})$ of subcategories in $\chn$ satisfy axiom TT1 of a torsion theory.
\item The pair $(\mathcal{EP}_n, \pch_{n-1 \geq})$ of subcategories in $\pch$ is a torsion theory in $\pch$.
\end{enumerate}
\end{teo}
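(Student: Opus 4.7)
The plan is to analyze the unit $\eta_M$ of the adjunction $\mathbf{cot}_{n-1}\dashv \mathbf{sk}_{n-1}$ degree by degree; both parts reduce to the same behaviour, in which every component of the morphisms in sight is trivially zero except at the critical degree $n-1$.

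For part (1), let $T\in Ker(\mathbf{cot}_{n-1})$, $F\in \chn_{n-1 \geq}$ and $f:T\to F$ a chain map. The hypothesis $\mathbf{cot}_{n-1}(T)\cong 0$ forces $T_i=0$ for $i<n-1$ and $Cok(\delta_n^T)=0$, while the hypothesis on $F$ forces $F_i=0$ for $i\geq n$. Hence $f_i=0$ for every $i\neq n-1$. At $i=n-1$ the naturality square together with $f_n=0$ yields $f_{n-1}\delta_n^T=0$, and since $Cok(\delta_n^T)=0$ the universal property of the cokernel produces a factorization of $f_{n-1}$ through the zero object, so $f_{n-1}=0$.

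For part (2), TT1 is immediate from part (1) together with the lemma $\mathcal{EP}_n=Ker(\mathbf{cot}_{n-1})\cap \pch$, since $\pch_{n-1 \geq}\subseteq \chn_{n-1 \geq}$. For TT2, given $M\in \pch$ I would set $F_M:=\mathbf{Cot}_{n-1}(M)$ with quotient the unit $\eta_M$, and define $T_M$ as the degree-wise kernel of $\eta_M$. Explicitly, $T_M$ agrees with $M$ in degrees $\geq n$, vanishes in degrees $<n-1$, and equals $\delta_n(M_n)$ in degree $n-1$; this is the correct kernel at $n-1$ because $M$ is proper, so $m_n$ is a normal monomorphism and hence the kernel of its own cokernel $cok(\delta_n)=\eta_{M,n-1}$. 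The induced differential $M_n\to \delta_n(M_n)$ is the normal epimorphism $e_n$ from the image factorization of $\delta_n$, and $e_n\delta_{n+1}=0$ follows from $m_n e_n\delta_{n+1}=\delta_n\delta_{n+1}=0$ together with $m_n$ being mono. By construction $T_M\in \mathcal{EP}_n$.

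The main obstacle is to verify that $0\to T_M\to M\to \mathbf{Cot}_{n-1}(M)\to 0$ is a short exact sequence \emph{in} $\pch$, not merely in $\chn$ (where component-wise exactness in $\X$ is transparent). The earlier lemma already gives $\mathbf{Cot}_{n-1}(M)\in \pch$, so what remains is to check $T_M\in \pch$: the differentials above $n+1$ are unchanged from $M$ and hence proper, $\delta_{n+1}^M$ is unchanged hence proper, and the new differential $e_n$ is a normal epimorphism and so trivially proper. Since cokernels in $\pch$ coincide with those in $\chn$ by the earlier lemma, $\eta_M$ is a normal epimorphism of proper chains with kernel $T_M$, delivering the required short exact sequence.
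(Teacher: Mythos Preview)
Your proposal is correct and follows essentially the same route as the paper: for part (1) both arguments reduce to the single nontrivial degree $n-1$ and use that $Cok(\delta_n^T)=0$ forces $f_{n-1}=0$, and for part (2) both build the TT2 short exact sequence from the image factorization $\delta_n=m_n e_n$, with $T_M$ having $e_n$ as its new bottom differential and $F_M=\mathbf{Cot}_{n-1}(M)$. Your write-up is simply more explicit than the paper's in checking that $T_M$ is proper and that $\delta_n(M_n)$ is exactly the kernel of $cok(\delta_n)$ (via normality of $m_n$); the paper just displays the diagram and remarks that properness gives $Cok(\delta_n)\cong M_{n-1}/\delta_n(M_n)$.
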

\begin{proof} 1) Let $f:M\to N$ a morphism in $\chn$ with $M$ in $Ker(\mathbf{cot}_{n-1})$ and $N$ in $\chn_{n-1 \geq}$:
\[
\begin{tikzcd} 
\dots\ar[r] & M_{n+1}\ar[r, "\delta_{n+1}"]\ar[d, "f_{n+1}"] & M_n\ar[r, "\delta_n"]\ar[d, "f_n"] & M_{n-1}\ar[r]\ar[d, "f_{n-1}"] &0\ar[d, "f_{n-2}"]\ar[r] & \dots \\
\dots\ar[r] & 0\ar[r] & 0\ar[r] & N_{n-1}\ar[r, "\delta'_{n-1}"] & N_{n-2}\ar[r] & \dots
\end{tikzcd}\]
Since $\delta_n: M_n \to M_{n-1}$ has trivial cokernel then $f_{n-1}=0$, hence $f=0$.

From 1), we only need to prove axiom TT2 of a torsion theory. For a proper chain complex $M$ the short exact sequence is given by: 
\begin{equation}\label{sescok}
\begin{tikzcd}
\hdots \ar[r]
 & M_{n+1} \ar[r, "\delta_{n+1}"] \ar[d, "id"]
 & M_n \ar[r, "e_n", two heads] \ar[d, "id"]
  & \delta_n(X_n) \ar[r] \ar[d, "m_n", hook]
   & 0 \ar[r] \ar[d]
    & \hdots
 \\ 
\hdots \ar[r]
 & M_{n+1} \ar[r, "\delta_{n+1}"] \ar[d]
 & M_n \ar[r, "\delta_n"] \ar[d]
  & M_{n-1} \ar[r, "\delta_{n-1}"] \ar[d, "cok(\delta_n)", two heads] 
   & M_{n-2} \ar[r] \ar[d, "id"]
    & \hdots
\\
\hdots \ar[r]
 & 0 \ar[r]
 & 0 \ar[r] 
  & Cok(\delta_{n}) \ar[r]
   & M_{n-2} \ar[r]
    & \hdots
\end{tikzcd}
\end{equation}
Notice that since $M$ is a proper chain complex $Cok(\delta_n) \cong M_{n-1}/\delta_n(M_n)$.
\end{proof}

\begin{strc} By duality, let $\chn_{\geq n}$ be the category of $n$-truncated below chain complexes. It is straightforward to define the duals of the functors of \ref{adjun}:
\begin{itemize}
\item $\mathbf{tr}'_n( M )=  \begin{tikzcd}
\hdots \ar[r] & M_{n+2}\ar[r] & M_{n+1}\ar[r] & M_n
\end{tikzcd}$
\item $\mathbf{sk}'_n(M)= \begin{tikzcd} 
\hdots \ar[r]  & M_{n+1}\ar[r] & M_n\ar[r] & 0\ar[r]& 0\ar[r] &\hdots
\end{tikzcd}$
\item $\mathbf{cosk}'_n(M) = \begin{tikzcd}
\hdots \ar[r]  & M_{n+1}\ar[r] & M_n\ar[r, "cok(\delta_{n+1})"] & cok(\delta_{n+1})\ar[r]& 0 \hdots
\end{tikzcd}$
\item $\mathbf{cot}'_n(M)= \begin{tikzcd}
\hdots \ar[r] & M_{n+2}\ar[r] & M_{n+1}\ar[r, "\delta_{n+1}"] & ker(\delta_n)
\end{tikzcd}$
\end{itemize}
and indeed these also give a string of adjunctions:
\[ \mathbf{cosk}'_n \dashv \mathbf{tr}'_n \dashv \mathbf{sk}'_n \dashv \mathbf{cot}'_n: 
\begin{tikzcd}[row sep=large]
\chn_{ \geq n} 
\ar[d, bend left, shift right=.5] \ar[d, bend right, shift right=4] \ar[d, phantom, "\dashv"] \ar[d,phantom,"\dashv", shift right=5.5 ] \ar[d,phantom, "\dashv", shift left=5.5 ] \\
\chn 
\ar[u, bend left, shift right=.5] \ar[u, bend right, shift right=4]
\end{tikzcd} .
\]
We consider $\chn_{n \geq}$ as a subcategory of $\chn$ through $\mathbf{sk}'_n$.
However, unlike in the case of $\chn_{n \geq}$, the category $\chn_{\geq n}$ is a torsion category without the need of the restriction to proper chain complexes with $\mathbf{sk}'_n \dashv \mathbf{cot}'_n$ as the coreflection.
\end{strc}

\begin{teo}\label{ttker}  For each $n\in \mathbb{Z}$ we have:
\begin{enumerate}
\item The adjunction $\mathbf{tr}_{n-1} \dashv \mathbf{cosk}_{n-1}: \chn \to \chn_{n-1 \geq}$ is a localization and thus by theorem \ref{loctt} it induces a hederitary torsion theory $(\T_{\mathbf{tr}_{n-1}}, \F_{\mathbf{tr}_{n-1}})$ in $\chn$.
\item The category $\T_{\mathbf{tr}_{n-1}}$ is equivalent to $\chn_{ \geq n}$.
\item The reflector of $\chn_{ \geq n}$ is given by  $\mathbf{cot}'_n$.
\item The restriction of $(\chn_{ \geq n}, \F_{\mathbf{tr}_{n-1}})$ to $\pch$ is the torsion theory $(\pch_{\geq n}, \mathcal{MN}_n)$ in $\pch$.
\end{enumerate}
\end{teo}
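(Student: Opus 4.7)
For part (1), I would note that limits in $\chn$ are computed degreewise, so the truncation $\mathbf{tr}_{n-1}$ (which merely discards degrees $\geq n$) preserves all finite limits; hence $\mathbf{tr}_{n-1} \dashv \mathbf{cosk}_{n-1}$ is a localization, and Theorem \ref{loctt} directly yields the hereditary torsion theory $(\T_{\mathbf{tr}_{n-1}}, \F_{\mathbf{tr}_{n-1}})$. For (2), unfolding gives $\T_{\mathbf{tr}_{n-1}} = \{M : \mathbf{tr}_{n-1}(M) \cong 0\}$, which describes chain complexes vanishing in degrees $\leq n-1$, hence coincides with $\chn_{\geq n}$ via $\mathbf{sk}'_n$. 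For (3), the previously established adjunction $\mathbf{sk}'_n \dashv \mathbf{cot}'_n$ exhibits $\mathbf{cot}'_n$ as the right adjoint to the inclusion $\mathbf{sk}'_n : \chn_{\geq n} \hookrightarrow \chn$; since the torsion subcategory is coreflective via the torsion functor $T$ in the adjunction (\ref{ttadj}), this identifies $\mathbf{cot}'_n$ with $T$.

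For part (4), I would first compute the unit $\eta_M : M \to \mathbf{cosk}_{n-1}\mathbf{tr}_{n-1}(M)$ of the localization at a proper $M$: it is the identity below degree $n$, the zero map above degree $n$, and at degree $n$ it is the factorization $M_n \to \ker(\delta_{n-1})$ of $\delta_n$ through the kernel (which exists since $\delta_{n-1}\delta_n = 0$). From this, $\eta_M$ is monic iff $M_i = 0$ for $i > n$ and $\delta_n$ is monic; for a proper $M$, any such monic $\delta_n$ is automatically a normal monomorphism via its image factorization, which identifies $\F_{\mathbf{tr}_{n-1}} \cap \pch$ with $\mathcal{MN}_n$. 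The canonical short exact sequence from Theorem \ref{loctt} then reads
\[\begin{tikzcd} 0 \ar[r] & \mathbf{sk}'_n\mathbf{cot}'_n(M) \ar[r] & M \ar[r] & \eta_M(M) \ar[r] & 0 \end{tikzcd}\]
with $\eta_M(M)$ equal to $M$ below $n$, to $\delta_n(M_n)$ at degree $n$, and zero above; its top differential $\delta_n(M_n) \hookrightarrow M_{n-1}$ is a normal monomorphism by properness of $M$, so $\eta_M(M) \in \mathcal{MN}_n$.

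The main obstacle is verifying that the torsion part $\mathbf{sk}'_n\mathbf{cot}'_n(M)$ is also proper. The only non-trivial differential to check is the induced map $M_{n+1} \to \ker(\delta_n)$ factoring $\delta_{n+1}$, whose image equals $\delta_{n+1}(M_{n+1})$ sitting inside $\ker(\delta_n) \hookrightarrow M_n$. Properness of $M$ gives $\delta_{n+1}(M_{n+1}) \hookrightarrow M_n$ normal, and $\ker(\delta_n) \hookrightarrow M_n$ is normal as a kernel; I would then invoke the standard fact in semi-abelian categories that a normal subobject contained in another normal subobject is itself normal in the larger one (which can be extracted from Proposition \ref{iddet} applied to the quotient square supplied by Lemma \ref{n3t}) to conclude that $\delta_{n+1}(M_{n+1}) \hookrightarrow \ker(\delta_n)$ is a normal monomorphism. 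This confirms that the canonical SES lives in $\pch$, and combined with (1)--(3) establishes $(\pch_{\geq n}, \mathcal{MN}_n)$ as the restriction of $(\chn_{\geq n}, \F_{\mathbf{tr}_{n-1}})$ to $\pch$.
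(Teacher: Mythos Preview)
Your proposal is correct and follows essentially the same route as the paper: compute the unit of $\mathbf{tr}_{n-1}\dashv\mathbf{cosk}_{n-1}$ explicitly, read off the torsion and torsion-free parts from the resulting short exact sequence, and identify them with $\chn_{\geq n}$ and $\mathcal{MN}_n$. Two minor differences are worth noting. For (1) the paper argues that $\mathbf{tr}_{n-1}$ preserves finite limits simply because it has a left adjoint $\mathbf{sk}_{n-1}$, which is slicker than invoking degreewise limits. For (4) you are actually more careful than the paper: you explicitly verify that the torsion part $\mathbf{sk}'_n\mathbf{cot}'_n(M)$ is proper by checking that $\delta_{n+1}(M_{n+1})\hookrightarrow\ker(\delta_n)$ is normal, whereas the paper just displays the short exact sequence and declares the restriction to $\pch$ immediate. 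Your justification of that normality is fine, though it can be shortened: since $\delta_{n+1}(M_{n+1})$ is the kernel of some $q:M_n\to C$ (by properness of $M$), its pullback along the mono $\ker(\delta_n)\hookrightarrow M_n$ is the kernel of $q$ restricted to $\ker(\delta_n)$, and this pullback is $\delta_{n+1}(M_{n+1})$ itself.
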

\begin{proof}  1) The functor $\mathbf{tr}_{n-1}$ preserves finite limits since it admits a left adjoint, namely $\mathbf{sk}_{n-1}$. 2) is trivial since by definition $\T_{\mathbf{tr}_{n-1}}=Ker(\mathbf{cot}'_n)$. 3) and 4) follow immediately from the associated short exact sequence of $(\T_{\mathbf{tr}_{n-1}}, \F_{\mathbf{tr}_{n-1}})$, which is given by Theorem \ref{loctt}. 

To be precise, the unit $\eta$ of $\mathbf{tr}_{n-1} \dashv \mathbf{cosk}_{n-1}$ for a chain complex $M$ is:
\[\begin{tikzcd} 
M=\ar[d, "\eta_M"]  \dots\ar[r] & M_{n+1}\ar[r, "\delta_{n+1}"]\ar[d] & M_n\ar[r, "\delta_n"]\ar[d,"\delta_n'"] & M_{n-1}\ar[r, "\delta_{n-1}"]\ar[d,"1"] & M_{n-2}\ar[d, "1"]\ar[r] & \dots
\\ \mathbf{Cosk}_{n-1}(M)=  \dots\ar[r] & 0\ar[r]& Ker(\delta_{n-1})\ar[r, "ker(\delta_{n-1})"] & M_{n-1}\ar[r, "\delta_{n-1}"] & M_{n-2}\ar[r] & \dots  \end{tikzcd}\]
The normal epi-mono factorization of $\delta'_n$ is given by $(e_n, m'_n)$ where $m'_n$ is given by $m_n$ and the universal property of $ker(\delta_{n-1})$:
\[\begin{tikzcd} M_n \ar[r, "\delta_n"]\ar[d, "e_n", two heads] & M_{n-1}\ar[r, "\delta_{n-1}"] & M_{n-2} \\ \delta_n(M_n)\ar[ru,"m_n"]\ar[r, "m'_n"] & Ker(\delta_{n-1})\ar[u,"ker(\delta_{n-1})"', hook] & \end{tikzcd} \]
So the associated short exact sequence for a chain complex $M$ is: 
\begin{equation}\label{sesker}
\begin{tikzcd}
\hdots \ar[r]
 & M_{n+1} \ar[r] \ar[d, "id"]
  & ker(\delta_n) \ar[r] \ar[d, "ker(\delta_n)", hook]
   & 0 \ar[r] \ar[d]
   & 0 \ar[r] \ar[d]
    & \hdots
 \\ 
\hdots \ar[r]
 & M_{n+1}\ar[r, "\delta_{n+1}"] \ar[d]
  & M_n \ar[r, "\delta_n"] \ar[d, "e_n", two heads] 
   & M_{n-1} \ar[r, "\delta_{n-1}"] \ar[d, "id"]
    & M_{n-2} \ar[r] \ar[d, "id"]
    & \hdots
\\
\hdots \ar[r]
 & 0 \ar[r] 
  & \delta(M_n)\ar[r, "m_n"]
   & M_{n-1} \ar[r]
    & M_{n-2} \ar[r] 
    & \hdots
\end{tikzcd}
\end{equation}
\end{proof}

Now, we will now restrict ourselves to the case of proper chains.

\begin{defi}\label{cot}  For each $n \in \mathbf{Z}$, the torsion theories in $\pch$ from theorems \ref{ttcok} and \ref{ttker} will be denoted as
\[\mathcal{COK}_n=(\mathcal{EP}_n,\pch_{n-1\geq }) \quad \mbox{and} \quad \mathcal{KER}_n=(\pch_{ \geq n},\mathcal{MN}_n )\]
with the preradicals $ker_n, cok_n: \pch \to \pch$, respectively.

We will write $\mathcal{COT}(\X)$ for the set of torsion theories $\mathcal{COK}_n, \, \mathcal{KER}_n$ given by the cotruncation functors;
\[\mathcal{COK}_n = \begin{tikzcd} \mathcal{EP}_n\ar[r, bend left]\ar[r, phantom, "\perp"]  & \pch  \ar[l, bend left ]\ar[r, bend left, "\mathbf{cot}_{n-1}" above]\ar[r,phantom, "\perp"] &  \pch_{n-1\geq } \ar[l, bend left, "\mathbf{sk}_{n-1}" below] \end{tikzcd}\] and 
\[\mathcal{KER}_n=\begin{tikzcd}\pch_{ \geq n}\ar[r, bend left, "\mathbf{sk}_n'" above]\ar[r, phantom, "\perp"]  & \pch \ar[l, bend left, "\mathbf{cot}_n'" below]\ar[r, bend left]\ar[r,phantom, "\perp"] &  \mathcal{MN}_n \, . \ar[l, bend left] \end{tikzcd} \]
 The next result asserts that $\mathcal{COT}(\X)$ is a linearly order sublattice of $(\pch) tors$.
\end{defi}

\begin{prop}\label{latinf}  In $\pch$ for each $n \in \Z$ we have the embeddings of full subcategories 
 \[  \hdots \; \leq \; \pch_{ \geq n+1} \; \leq \; \mathcal{EP}_{n+1} \; \leq \; \pch_{ \geq n} \; \leq \; \mathcal{EP}_n  \; \leq \; \pch_{ \geq n-1} \; \leq \; \hdots \]
Equivalently,
\[  \hdots \; \geq \; \mathcal{MN}_{n+1} \; \geq \; \pch_{n+1\geq } \; \geq \; \mathcal{MN}_n \geq \; \pch_{n \geq } \; \geq \; \mathcal{MN}_{n-1} \; \geq \; \hdots \; .\]
Moreover, there is a linearly ordered lattice of torsion theories in $\pch$:
\[ O  \leq  \hdots \leq \mathcal{KER}_{n+1} \leq \mathcal{COK}_{n+1} \leq \mathcal{KER}_n \leq \mathcal{COK}_n \leq \hdots \leq \pch \]
\end{prop}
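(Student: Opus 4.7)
The plan is to verify the two chains of full-subcategory inclusions componentwise from the definitions, and then to read off the chain of torsion theories in $(\pch)tors$ directly from the torsion-side chain using the definition of the lattice order.

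The first chain is essentially immediate from the definitions of the subcategories involved. For instance, the inclusion $\pch_{\geq n+1} \leq \mathcal{EP}_{n+1}$ holds because any $M \in \pch_{\geq n+1}$ has $M_i=0$ for $i<n+1$, so in particular $M_n=0$, which makes $\delta_{n+1}\colon M_{n+1}\to 0$ vacuously a normal epimorphism, while the additional vanishing condition $M_i=0$ for $i<n$ is automatic. The inclusion $\mathcal{EP}_{n+1}\leq \pch_{\geq n}$ is exactly the vanishing clause in the definition of $\mathcal{EP}_{n+1}$. The two remaining links in the displayed chain are the same arguments with indices shifted down by one.

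The second displayed chain of torsion-free subcategories then follows from the standard fact, recalled at the start of Section~2, that for two torsion theories $(\T,\F)$ and $(\mathcal{S},\mathcal{G})$ one has $\T\subseteq \mathcal{S}$ if and only if $\mathcal{G}\subseteq \F$. Applying this pairwise to $\mathcal{KER}_k=(\pch_{\geq k},\mathcal{MN}_k)$ and $\mathcal{COK}_k=(\mathcal{EP}_k,\pch_{k-1\geq})$, as provided by Theorems~\ref{ttcok} and~\ref{ttker}, simply reverses the chain. A direct verification is equally straightforward: the key remark is that any differential into or out of the zero object is automatically both a normal monomorphism and a normal epimorphism, which makes the extra structural conditions defining $\mathcal{EP}_n$ and $\mathcal{MN}_n$ vacuous for complexes truncated far enough.

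Finally, the third display is immediate from the lattice order on $\X tors$ recalled earlier: the inclusions $\pch_{\geq n+1}\subseteq \mathcal{EP}_{n+1}\subseteq \pch_{\geq n}\subseteq \mathcal{EP}_n$ translate verbatim into $\mathcal{KER}_{n+1}\leq \mathcal{COK}_{n+1}\leq \mathcal{KER}_n\leq \mathcal{COK}_n$, and totality of the chain of subcategories immediately yields a linearly ordered sublattice. I do not anticipate any serious obstacle; once the vanishing/normality observations above are in hand, the proposition is pure index-chasing.
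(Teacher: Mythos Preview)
Your proposal is correct and follows essentially the same approach as the paper's proof: verify $\mathcal{EP}_n \leq \pch_{\geq n-1}$ directly from the vanishing clause in the definition, observe that a map to the zero object is automatically a normal epimorphism to get $\pch_{\geq n+1}\leq \mathcal{EP}_{n+1}$, and then invoke the order-reversing correspondence between torsion and torsion-free sides to obtain the remaining displays. You spell out slightly more detail than the paper, but there is no substantive difference.
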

\begin{proof}
By definition we have $\mathcal{EP}_n \leq \pch_{ \geq n-1}$ and since a morphism $M_{n+1} \to 0$ is a normal epimorphism we have $\pch_{\geq n+1} \leq \mathcal{EP}_n$. Recall that the order is reverse for the torsion-free subcategories.
\end{proof}

This construction works with truncated or bounded chains complexes, in particularly we will be interested in the case for $\pch_{ \geq 0}$, and $\pch_{n \geq 0}$, the category of chain complexes bounded above $n$ and below $0$ for a fixed $n$.

\begin{coro}\label{lati0}  In $\pch_{ \geq 0}$ there is a linearly ordered lattice of torsion theories given by:
\[ O  \leq  \hdots \leq \mathcal{KER}_n \leq \mathcal{COK}_n \leq \hdots  \leq \mathcal{KER}_1 \leq \mathcal{COK}_1 \leq \pch_{ \geq 0} \]
\end{coro}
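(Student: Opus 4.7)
The plan is to obtain Corollary \ref{lati0} as a direct restriction of Proposition \ref{latinf} to the subcategory $\pch_{\geq 0}$, truncating the infinite lattice at $\mathcal{KER}_0$. First I would observe that $\pch_{\geq 0}$ is itself semi-abelian: by Theorem \ref{ttker} it is the torsion subcategory of the hereditary torsion theory $\mathcal{KER}_0$ in $\pch$, so Theorem \ref{hersemiab} applies. In particular $\pch_{\geq 0}$ is closed in $\pch$ under subobjects (by hereditariness) and under quotients (a general property of torsion subcategories), so the lattice $(\pch_{\geq 0})\mathrm{tors}$ is well-defined.

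Next, for each $n \geq 1$ I would verify that $\mathcal{KER}_n$ and $\mathcal{COK}_n$ restrict to torsion theories in $\pch_{\geq 0}$. The torsion subcategories $\pch_{\geq n}$ and $\mathcal{EP}_n$ are already contained in $\pch_{\geq 0}$ whenever $n \geq 1$: trivially for the first, and because the defining condition $M_i = 0$ for $n-1 > i$ in $\mathcal{EP}_n$ forces $M_i = 0$ for $i < 0$ once $n \geq 1$. Given any $X \in \pch_{\geq 0}$, the short exact sequences (\ref{sescok}) and (\ref{sesker}) describing the torsion/torsion-free decomposition in $\pch$ have all three terms in $\pch_{\geq 0}$, since the torsion part lies in $\pch_{\geq 0}$ and the torsion-free part is a quotient of $X$. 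Consequently the restricted pairs $(\pch_{\geq n},\, \mathcal{MN}_n \cap \pch_{\geq 0})$ and $(\mathcal{EP}_n,\, \pch_{n-1 \geq} \cap \pch_{\geq 0})$ satisfy axioms TT1 and TT2 inside $\pch_{\geq 0}$.

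Finally, the linear ordering transfers verbatim: the chain of inclusions from Proposition \ref{latinf} restricted to indices $n \geq 1$ lives entirely in $\pch_{\geq 0}$, and the top element is the trivial torsion theory $\pch_{\geq 0} := (\pch_{\geq 0}, 0)$, because the next term $\mathcal{KER}_0$ of the original lattice has torsion subcategory equal to all of $\pch_{\geq 0}$. Since Proposition \ref{latinf} already contains the substantial content, the only matter requiring attention is the restriction of the torsion-free subcategories, which is essentially a bookkeeping verification using closure of $\pch_{\geq 0}$ under quotients in $\pch$; I do not anticipate any real obstacle.
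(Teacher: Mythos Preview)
Your approach is the same as the paper's: the corollary is stated without proof, as an immediate restriction of Proposition \ref{latinf} to non-negatively graded proper complexes. Your verification of TT1 and TT2 via the explicit short exact sequences (\ref{sescok}) and (\ref{sesker}) is exactly the intended argument, just spelled out in more detail than the paper bothers with.

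One minor caution: your appeal to Theorem \ref{hersemiab} to conclude that $\pch_{\geq 0}$ is semi-abelian is not quite licit, because that theorem assumes the ambient category is semi-abelian, and the paper explicitly notes that $\pch$ need not be (it may lack kernels). This does not damage your argument, since you never actually use semi-abelianness of $\pch_{\geq 0}$ afterwards; the direct check of TT1 and TT2 that you outline suffices, in keeping with the paper's convention that a short exact sequence in $\pch$ simply means one in $\chn$ whose terms are proper. You can safely drop that step.
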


\begin{coro}\label{latn0}  In $\pch_{n \geq 0}$ there is a linearly ordered lattice of torsion theories given by:
\[ O \leq \mathcal{KER}_n \leq \mathcal{COK}_n \leq \hdots \leq \mathcal{COK}_2 \leq \mathcal{KER}_1 \leq \mathcal{COK}_1 \leq \pch_{n \geq 0} \]
\end{coro}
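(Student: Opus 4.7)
The plan is to deduce the statement directly from Proposition \ref{latinf} (equivalently, from Corollary \ref{lati0}) by restricting the lattice of torsion theories in $\pch$ to the full subcategory $\pch_{n \geq 0}$ of complexes additionally vanishing in degrees above $n$. The argument has two ingredients: first, verify that each $\mathcal{KER}_k, \mathcal{COK}_k$ with $1 \leq k \leq n$ restricts to a torsion theory in $\pch_{n \geq 0}$; second, observe that for $k > n$ the corresponding torsion theories collapse to the trivial $O$, so the bi-infinite chain of Proposition \ref{latinf} truncates from below at $\mathcal{KER}_n$.

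For the first ingredient, I would inspect the short exact sequences (\ref{sesker}) and (\ref{sescok}) that witness axiom TT2 for $\mathcal{KER}_k$ and $\mathcal{COK}_k$. Both sequences are built degree by degree from the kernels, cokernels and images of the differentials of $M$, so if $M$ is concentrated in degrees $[0,n]$, then so are both the torsion subcomplex and the torsion-free quotient. Hence each restricted pair is again a torsion theory in $\pch_{n \geq 0}$, and the ordering $\mathcal{KER}_k \leq \mathcal{COK}_k \leq \mathcal{KER}_{k-1}$ is inherited verbatim from Proposition \ref{latinf}.

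For the collapse from below, I would argue that for $k > n$ the torsion subcategories $\pch_{\geq k} \cap \pch_{n \geq 0}$ and $\mathcal{EP}_k \cap \pch_{n \geq 0}$ contain only the zero complex. Indeed, a complex in $\pch_{\geq k} \cap \pch_{n \geq 0}$ has $M_i = 0$ for both $i < k$ and $i > n$, leaving no degree available. Similarly, a complex in $\mathcal{EP}_k \cap \pch_{n \geq 0}$ with $k > n$ has $M_k = 0$, so $\delta_k \colon 0 \to M_{k-1}$ being a normal epimorphism forces $M_{k-1} = 0$; combined with the $\mathcal{EP}_k$ condition $M_i = 0$ for $i < k-1$, this gives $M \cong 0$. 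The top of the chain is identified with $\pch_{n \geq 0}$ itself, corresponding to the restriction of $\mathcal{KER}_0$ whose torsion subcategory $\pch_{\geq 0}$ already covers the whole ambient category in the bounded setting.

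The main --- and only modest --- obstacle is the careful bookkeeping of index ranges: checking that the chain starts non-trivially at $\mathcal{KER}_n$ (its torsion subcategory consists of objects of $\X$ placed in degree $n$) and that it terminates properly below $\pch_{n \geq 0}$ at $\mathcal{COK}_1$, whose torsion-free subcategory $\pch_{0 \geq} \cap \pch_{n \geq 0}$ consists of complexes concentrated in degree $0$ and is therefore strictly smaller than $\pch_{n \geq 0}$ as soon as any nontrivial higher degree appears.
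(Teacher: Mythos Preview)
Your proposal is correct and follows exactly the route the paper intends: the corollary is stated in the paper without proof, as an immediate restriction of Proposition~\ref{latinf} (via Corollary~\ref{lati0}) to the bounded category $\pch_{n \geq 0}$. Your careful verification that the short exact sequences (\ref{sesker}) and (\ref{sescok}) stay within $\pch_{n \geq 0}$ and that the torsion theories for $k>n$ collapse to $O$ simply spells out what the paper leaves implicit.
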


We will write $\mathcal{COT}(\pch_{ \geq 0})$ and $\mathcal{COT}(\pch_{n \geq 0})$ for the corresponding lattice of torsion theories in the bounded cases of $\pch_{\geq 0}$ and of $\pch_{n \geq 0}$.

\begin{strc} \textit{Example}. For the bounded case of $\pch_{2 \geq 0}$ the lattice $\mathcal{COT}(\pch_{2 \geq 0})$ is:
\[ O \leq \mathcal{KER}_2 \leq \mathcal{COK}_2 \leq \mathcal{KER}_1 \leq \mathcal{COK}_1 \leq \pch_{2 \geq 0}\, . \]
This lattice induces a lattice of preradicals $\pch_{2 \geq 0}$  and hence, for a fixed proper chain complex $M$, a lattice of torsion subobjects of $M$:
\[\begin{tikzcd}[row sep=tiny] 
   M=         & M_2\ar[r, "\delta_2"]  & M_1    \ar[r, "\delta_1"] & M_0 
\\ cok_1(M)=  & M_2\ar[r, "\delta_2"]  & M_1    \ar[r, "e_1", two heads] & \delta_1(M_1)
\\ ker_1(M)=  & M_2\ar[r, "\delta'_2"]  & ker(\delta_1) \ar[r]    & 0
\\ cok_2(M)=  & M_2\ar[r, "e_2", two heads]  & \delta_2(M_2) \ar[r]      & 0
\\ ker_2(M)=  & ker(\delta_2) \ar[r] & 0               \ar[r]  & 0
\\ 0=         & 0 \ar[r]             & 0              \ar[r]   & 0\, .
\end{tikzcd}  \]   
\end{strc}

\section{Homology}

In abelian categories, the $n$th-homology objects of a chain complex $M$ is usually defined as $H_n(M)= ker(\delta_n)/\delta_{n+1}(M_{n+1})$.  We can also consider the dual homology object $K_n$. In other words, consider the commutative diagram:
\[\begin{tikzcd} M_{n+1}\ar[d, "e_{n+1}"'] \ar[rr, "\delta_{n+1}"]& & M_n \ar[rr, "\delta_n"] \ar[rd, "q_{n+1}"'] \ar[rrd, "e_{n}"] && M_{n-1} 
 \\ \delta_{n+1}(M_{n+1})\ar[rru, "m_{n+1}"] \ar[r, "m'_{n+1}"'] & Ker(\delta_n)\ar[ru, "k_n"'] & & Cok(\delta_{n+1}) \ar[r, "e'_n"'] & \delta_n(M_n) \ar[u, "m_n"'] \end{tikzcd}\]
where $\delta_n=m_ne_n, \, \delta_{n+1}=m_{n+1}e_{n+1}$ are the normal epi/mono factorizations and $m'_{n+1}$ and $e'_n$ are induced by  $Ker(\delta_n)$ and $Cok(\delta_{n+1})$, respectively. Then we have 
\[H_n(M)=Cok(M_{n+1} \to Ker(\delta_n))=Cok(m'_{n+1})\]
and \[K_n(M)= Ker(Cok(\delta_{n+1}\to M_{n-1}))=Ker(e'_n).\] 
It is well-known that in abelian categories the objects $H_n(M)$ and $K_n(M)$ are naturally isomorphic, and in \cite{EveLin04} this was proved to be the case also for exact homological categories provided that the chain complex $M$ is proper. The following result provides an alternative proof of this fact as well as showing the connection of the objects $H_n(M), K_n(M)$ with the preradicals in $\mathcal{COT}$.

\begin{lemma}\label{homo} Let $M$ be a proper chain complex then the objects $H_n(M)$, $K_n(M)$ are isomorphic and are given by
\[H_n(M)\cong K_n(M) \cong ker_n(M)/cok_{n+1}(M) \]
where $cok_{n+1}(M)$, $ker_n(M)$ are the torsion subojects of $M$ given by the torsion theories as in Definition \ref{cot} and where $H_n(M)$, $K_n(M)$ are considered as trivial chain complexes except at the order $n$ that have the objects $H_n(M)$, $K_n(M)$ respectively.
\end{lemma}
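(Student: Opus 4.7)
The plan is to compute $ker_n(M)/cok_{n+1}(M)$ degree-by-degree and match it against both $H_n(M)$ and $K_n(M)$, using Lemma \ref{n3t} as the essential tool. Since $\delta_n\delta_{n+1}=0$, the image $m_{n+1}:\delta_{n+1}(M_{n+1})\to M_n$ factors through $k_n:ker(\delta_n)\to M_n$ as $m_{n+1}=k_n m'_{n+1}$. Because $M$ is proper, $m_{n+1}$ is a normal monomorphism, and $k_n$ is always normal. A standard semi-abelian fact, namely $\delta_{n+1}(M_{n+1})=Ker(cok(m_{n+1})\circ k_n)$ since $\delta_{n+1}(M_{n+1})\leq ker(\delta_n)$ inside $M_n$, then shows that the induced morphism $m'_{n+1}$ is itself a normal monomorphism in $\X$.

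For the first isomorphism I would exhibit a short exact sequence in $\pch$
\[\begin{tikzcd} 0 \ar[r] & cok_{n+1}(M) \ar[r] & ker_n(M) \ar[r] & H_n(M) \ar[r] & 0, \end{tikzcd}\]
in which $H_n(M)$ is regarded as the (trivially proper) chain complex concentrated in degree $n$ with zero differentials. Componentwise, the left map is the identity in degrees $\geq n+1$, the normal monomorphism $m'_{n+1}$ in degree $n$, and zero below; the right map is the quotient $ker(\delta_n)\twoheadrightarrow ker(\delta_n)/\delta_{n+1}(M_{n+1})=H_n(M)$ in degree $n$, and zero elsewhere. Commutation with differentials reduces to the identity $m'_{n+1}e_{n+1}=\delta'_{n+1}$, which follows from $k_n$ being monic. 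Since short exact sequences in $\pch$ are precisely degree-wise short exact sequences, exactness follows from the defining short exact sequence of $H_n(M)$ at degree $n$ and from triviality elsewhere. This yields $ker_n(M)/cok_{n+1}(M)\cong H_n(M)$.

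For the comparison with $K_n(M)$ I would apply Lemma \ref{n3t} in $\X$ to the pair of normal subobjects $\delta_{n+1}(M_{n+1})\leq ker(\delta_n)$ of $M_n$, obtaining
\[\begin{tikzcd} 0 \ar[r] & ker(\delta_n)/\delta_{n+1}(M_{n+1}) \ar[r] & M_n/\delta_{n+1}(M_{n+1}) \ar[r] & M_n/ker(\delta_n) \ar[r] & 0. \end{tikzcd}\]
Writing $\delta_{n+1}=m_{n+1}e_{n+1}$ with $e_{n+1}$ a normal epimorphism identifies the middle term with $Cok(\delta_{n+1})$; the image factorization of $\delta_n$ identifies the right-hand term with $\delta_n(M_n)$; and because $e_n\delta_{n+1}=0$, the universal property of $Cok(\delta_{n+1})$ identifies the connecting morphism with $e'_n$. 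Exactness then yields $H_n(M)\cong Ker(e'_n)=K_n(M)$.

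The only delicate point is the normality of $m'_{n+1}$, which relies squarely on $M$ being proper; once that is in hand, both identifications reduce to applications of Lemma \ref{n3t} and the degree-wise structure of short exact sequences in $\pch$.
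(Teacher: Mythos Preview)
Your proof is correct and follows essentially the same idea as the paper: both arguments rest on Lemma~\ref{n3t} applied to the chain of normal subobjects $\delta_{n+1}(M_{n+1})\leq \ker(\delta_n)\leq M_n$. The paper packages this as a single application of Lemma~\ref{n3t} in the semi-abelian category $\chn$ to the inclusion $cok_{n+1}(M)\leq ker_n(M)\leq M$, so that the resulting short exact sequence $0\to ker_n(M)/cok_{n+1}(M)\to M/cok_{n+1}(M)\to M/ker_n(M)\to 0$ exhibits $H_n(M)$ as the quotient on the left and $K_n(M)$ as the kernel on the right simultaneously; you instead extract the degree-$n$ component and apply Lemma~\ref{n3t} directly in $\X$. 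Your explicit verification that $m'_{n+1}$ is normal is a point the paper leaves implicit.
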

\begin{proof}
Since $\chn$ is semi-abelian it follows from Theorem (\ref{n3t}) when we consider the normal subobjects  $cok_{n+1}(M) \leq ker_n(M)$ of $M$. So we have a short exact sequence in $\chn$: 
\[\begin{tikzcd}[column sep=scriptsize]  0\ar[r] & ker_n(M)/ cok_{n+1}(M)\ar[r] & M/ cok_{n+1}(M)\ar[r] &  M/ker_n(M)\ar[r] & 0 \end{tikzcd} .\]
To be more precise observe the short exact sequences that define $H_n(M)$ and $K_n(M)$.  Then $H_n(M)$ is the cokernel of the inclusion $cok_{n+1}(M) \leq ker_n(M)$:
\[
\begin{tikzcd} 
0\ar[d] &&&&&\\
cok_{n+1}(M)= \ar[d] & \hdots \ar[r]
 & M_{n+1}  \ar[r] \ar[d]
  & \delta_{n+1}(M_{n+1}) \ar[r] \ar[d, "m_{n+1}'", hook]
   & 0  \ar[r] \ar[d]     
     & \hdots     \\
ker_n(M)= \ar[d] & \hdots \ar[r]
 & M_{n+1} \ar[r] \ar[d]
  & ker(\delta_n) \ar[r] \ar[d]
   & 0  \ar[r] \ar[d]
     & \hdots  \\
H_n(M)\ar[d]=& \hdots \ar[r]
 & 0  \ar[r]
  & H_n(X) \ar[r]
   & 0 \ar[r]
     & \hdots \\ 0&&&&& 
\end{tikzcd}
\]
so $H_n(M)\cong ker_n(M)/cok_{n+1}(M)$; and on the other hand
\[
\begin{tikzcd} 
0 \ar[d]&&&&& \\
K_n(M) = \ar[d] & \hdots \ar[r]
 & 0  \ar[r] \ar[d]
  & K_n(M) \ar[r] \ar[d]
   & 0  \ar[r] \ar[d]     
     & \hdots     \\
M/cok_{n+1}(M) = \ar[d] & \hdots \ar[r]
 & 0 \ar[r] \ar[d]
  & cok(\delta_{n+1}) \ar[r] \ar[d, "e'_n", two heads]
   & M_{n-1}  \ar[r] \ar[d]
     & \hdots  \\
M/ker_n(M) = \ar[d]& \hdots \ar[r]
 & 0  \ar[r]
  & \delta_n(M) \ar[r]
   & M_{n-1} \ar[r]
     & \hdots \\ 0&&&&& 
\end{tikzcd}
\]
so $K_n(M)\cong ker(M/cok_{n+1}(M) \to M/ker_n(M))$. Theorem (\ref{n3t}) yields the isomorphism $H_n(M)\cong K_n(M)$.
\end{proof}

\begin{prop}  For a proper chain complex $M$ the following are equivalent:
\begin{enumerate}
\item $H_n(M)=0$ ;
\item $M/ker_{n+1}(M) \cong \mathbf{Cosk}_n(M)$.
\end{enumerate}
Similarly, the following are equivalent:
\begin{enumerate}
\item $H_n(M)=0$ ;
\item $cok_n(M) \cong \mathbf{Cosk}'_n(M)$.
\end{enumerate}
where $ker_{n+1}(M)$ and $cok_n(M)$ are the torsion subobjects of $M$ given by the torsion theories in \ref{cot}. 
\end{prop}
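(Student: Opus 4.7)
The plan is to reduce each equivalence to a one-degree comparison between two chain complexes that agree with $M$ everywhere else. For the first equivalence, I unpack both $M/ker_{n+1}(M)$ and $\mathbf{Cosk}_n(M)$ using the explicit descriptions of the torsion subobjects from Definition~\ref{cot} (and the example at the end of Section~3) together with the description of $\mathbf{Cosk}_n$ from~\ref{adjun}. Both chain complexes coincide with $M$ in degrees $\leq n$, vanish in degrees $>n+1$, and differ only at degree $n+1$, where the first is $\delta_{n+1}(M_{n+1})$ with differential $m_{n+1}\colon \delta_{n+1}(M_{n+1})\to M_n$ and the second is $Ker(\delta_n)$ with differential $ker(\delta_n)\colon Ker(\delta_n)\to M_n$. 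The canonical monomorphism $m'_{n+1}\colon \delta_{n+1}(M_{n+1})\to Ker(\delta_n)$ introduced before Lemma~\ref{homo}, characterized by $ker(\delta_n)\circ m'_{n+1}=m_{n+1}$, then assembles with identities elsewhere into a chain morphism $\varphi\colon M/ker_{n+1}(M)\to\mathbf{Cosk}_n(M)$; the only non-trivial commutativity square, at the $(n+1)\to n$ slot, is precisely this defining identity. Hence $\varphi$ is an isomorphism iff $m'_{n+1}$ is, and since $m'_{n+1}$ is already a monomorphism this is equivalent to $Cok(m'_{n+1})=0$, i.e.\ to $H_n(M)=0$ by Lemma~\ref{homo}.

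The second equivalence is handled dually. Using Definition~\ref{cot} and the dual truncation/coskeleton functors described after Theorem~\ref{ttker}, both $cok_n(M)$ and $\mathbf{Cosk}'_n(M)$ agree with $M$ in degrees $\geq n$, vanish in degrees $<n-1$, and differ only at degree $n-1$: the former is $\delta_n(M_n)$ (with differential $e_n$ into it from degree $n$), the latter is $Cok(\delta_{n+1})$ (with differential $cok(\delta_{n+1})$ from degree $n$). The normal epimorphism $e'_n\colon Cok(\delta_{n+1})\to\delta_n(M_n)$ defined before Lemma~\ref{homo}, characterized by $e'_n\circ cok(\delta_{n+1})=e_n$, extends to a chain morphism $\psi\colon \mathbf{Cosk}'_n(M)\to cok_n(M)$ that is the identity in all other degrees; commutativity at the $n\to n-1$ slot reduces to the defining equation of $e'_n$. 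Hence $\psi$ is an isomorphism iff $e'_n$ is, and since $e'_n$ is a normal epimorphism this is equivalent to $Ker(e'_n)=K_n(M)=0$, equivalently $H_n(M)=0$ by Lemma~\ref{homo}.

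There is no substantial obstacle; the only real work is the bookkeeping needed to identify $M/ker_{n+1}(M)$ and $cok_n(M)$ explicitly as chain complexes, including their induced differentials after quotienting or taking images, and to verify that the natural comparison maps at degree $n+1$ (respectively, $n-1$) are genuine morphisms of chain complexes. Once this is done, each equivalence collapses to the elementary fact that a monomorphism (respectively, normal epimorphism) in a semi-abelian category is an isomorphism iff its cokernel (respectively, kernel) vanishes.
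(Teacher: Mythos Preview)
Your proposal is correct and follows essentially the same approach as the paper: the paper notes that the unit $M\to\mathbf{Cosk}_n(M)$ factors through the $\mathcal{MN}_{n+1}$-reflection $M/ker_{n+1}(M)$ (this factorization is exactly your chain map $\varphi$, with $m'_{n+1}$ in degree $n+1$), and then observes that $\delta_{n+1}(M_{n+1})\cong Ker(\delta_n)$ iff $H_n(M)=0$; the dual part is treated in one sentence. Your write-up simply makes the comparison map and the final cokernel/kernel argument more explicit.
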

\begin{proof} First, recall from  Theorem \ref{ttker}  that the unit $M \to \mathbf{Cosk}_n(M)$ factors through the reflection of $\mathcal{MN}_{n+1}$:
\[
\begin{tikzcd}[column sep=scriptsize] 
M = \ar[d, two heads] & \hdots \ar[r]
 & M_{n+2}  \ar[r, "\delta_{n+2}"] \ar[d]
  & M_{n+1} \ar[r, "\delta_{n+1}"] \ar[d, "e_{n+1}" ,two heads]
   & M_n  \ar[r, "\delta_n"] \ar[d]     
     & \hdots     \\
M/ker_{n+1}(M) = \ar[d] & \hdots \ar[r]
 & 0 \ar[r] \ar[d]
  & \delta_{n+1}(M_{n+1}) \ar[r, "m_{n+1}",hook] \ar[d]
   & M_n  \ar[r, "\delta_n"] \ar[d]
     & \hdots  \\
\mathbf{Cosk}_n(M) = & \hdots \ar[r]
 & 0  \ar[r]
  & ker(\delta_n) \ar[r, "k(\delta_n)", hook]
   & M_n \ar[r, "\delta_n"]
     & \hdots 
\end{tikzcd}
\]
And by definition, $\delta_{n+1}(M_{n+1}) \cong ker(\delta_n)$ if and only if $H_n(M)=0$. The second part is similar, since $cok(\delta_{n+1}) \cong \delta_n(M)$ if and only if $H_n(M)=0$.
\end{proof}

From \cite{ill71}, it is known that the contruncation functor $\mathbf{cot}_n, \, \mathbf{cot}'_n$ give truncations in the homology objects. The following lemma generalises these facts.

\begin{lemma}\label{homology}  Let $ker_n, \, cok_n$ be the preradicals of the torsion theories in Definition \ref{cot}. For a proper chain complex $M$ we have:
\begin{enumerate}
\item For all $n>0$
\[H_i(cok_n(M))=H_i(ker_n(M))= \left\{\begin{array}{cc}
  H_i(M) & i \geq n \\
 0     & n >i \, .
 \end{array}\right. \]
\item For all $n>0$
\[H_i \left( \frac{M}{cok_n(M)} \right)= H_i\left( \frac{M}{ker_n(M)} \right)= \left\{\begin{array}{cc}
  0 & i \geq n \\
 H_i(M)     & n >i \, .
 \end{array}\right.\]
\item For all $n>0$ 
\[ H_i\left( \frac{cok_n(M)}{ker_n(M)} \right) = 0 \ \mbox{for all} \ i \, .\]
\item For $m>n$
\[ H_i\left( \frac{cok_n(M)}{ker_m(M)} \right) =H_i\left( \frac{cok_n(M)}{cok_m(M)} \right)= \left\{\begin{array}{cc}
  H_i(M) &  m > i \geq n \\
 0  & \mbox{otherwise} \, .
 \end{array}\right.\]
\item Moreover, for $m>n$ 
\[H_i\left( \frac{cok_n(M)}{cok_m(M)} \right)=H_i\left( \frac{ker_n(M)}{cok_m(M)} \right)=H_i\left( \frac{ker_n(M)}{ker_m(M)} \right)\, .\]
\item In particular, for $m=n+1$
\[ H_i\left( \frac{cok_n(M)}{ker_{n+1}(M)} \right)= \left\{\begin{array}{cc}
  H_i(M) & i=n \\
 0  & i\neq n \, .
 \end{array}\right.\]
\end{enumerate} 
\end{lemma}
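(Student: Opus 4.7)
The strategy is to prove each part by direct computation, using the explicit degreewise description of the torsion subobjects $cok_n(M)$ and $ker_n(M)$ and their quotients. From the short exact sequences (\ref{sescok}) and (\ref{sesker}) in the proofs of Theorems \ref{ttcok} and \ref{ttker}, the torsion subobjects have the form
\[ cok_n(M)=(\,\dots \to M_{n+1}\xrightarrow{\delta_{n+1}} M_n\xrightarrow{e_n}\delta_n(M_n)\to 0 \to \dots\,), \]
\[ ker_n(M)=(\,\dots \to M_{n+1}\xrightarrow{\delta'_{n+1}}ker(\delta_n)\to 0\to\dots\,), \]
and $M/cok_n(M)$, $M/ker_n(M)$ are read off from the torsion-free rows of the same diagrams. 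Since cokernels in $\pch$ are computed componentwise, all iterated quotients of the form $cok_n(M)/ker_m(M)$, $cok_n(M)/cok_m(M)$, $ker_n(M)/cok_m(M)$ and $ker_n(M)/ker_m(M)$ with $m>n$ admit analogous explicit descriptions. Properness of $M$ will be used throughout in the form $\delta_i=m_ie_i$ with $m_i$ a normal monomorphism, which guarantees $ker(e_i)=ker(\delta_i)$ and that maps factoring through $ker(\delta_{i-1})$ preserve the relevant kernels and images.

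For Parts 1 and 2, applying the definition $H_i(N)=ker(\delta_i^N)/\delta_{i+1}^N(N_{i+1})$ to the explicit descriptions above reduces to checking three sorts of degrees: those where the complex agrees with $M$ (giving $H_i(M)$); the pivot degree $n$ or $n-1$, where one uses $ker(e_n)=ker(\delta_n)$ or the induced differential $\delta'_{n-1}:M_{n-1}/\delta_n(M_n)\to M_{n-2}$ with kernel $ker(\delta_{n-1})/\delta_n(M_n)=H_{n-1}(M)$; and degrees where the complex vanishes. For Part 3, a direct inspection shows that $cok_n(M)/ker_n(M)$ is concentrated in degrees $n$ and $n-1$ with value $\delta_n(M_n)$ in both and with the identity as differential, so the quotient is acyclic.

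For Parts 4 and 5, writing out the four quotient complexes componentwise for $m>n$ reveals the same underlying shape in the interior: the objects $M_i$ in degrees $n<i<m$ with the original $\delta_i$, connected at the top degree to either $\delta_m(M_m)$ via $m_m$ or $M_{m-1}/\delta_m(M_m)$ via $\delta'_{m-1}$, and at the bottom degree to either $\delta_n(M_n)$ via $e_n$ or $ker(\delta_n)$ via $\delta'_{n+1}$. In each of the four cases the top and bottom contributions are acyclic for the same reasons as in Parts 1 through 3, while at each interior degree $n\leq i\leq m-1$ the kernel and image of the induced differentials coincide with $ker(\delta_i)$ and $\delta_{i+1}(M_{i+1})$ respectively, giving $H_i=H_i(M)$. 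Part 6 is the specialization $m=n+1$, in which only $i=n$ remains in the interior.

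The principal bookkeeping obstacle is verifying that the induced differentials of the quotient complexes have the same kernels and images, up to the relevant normal subobjects, as the original $\delta_i$. This rests on the factorization $\delta_i=m_ie_i$ together with Proposition \ref{iddet}, which ensures that induced maps remain normal monomorphisms when passing to quotients. Once this is established the proof of each of the six statements reduces to the same uniform degreewise calculation.
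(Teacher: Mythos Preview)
Your proposal is correct and follows essentially the same approach as the paper: the paper's proof simply says ``It is straightforward to calculate the homology of each chain complex'' and then lists, for each part, the explicit degreewise form of the relevant complex (exactly the descriptions you wrote down). Your write-up is in fact more detailed than the paper's, spelling out why the pivot degrees behave correctly via $ker(e_n)=ker(\delta_n)$ and the induced differentials, whereas the paper leaves these verifications entirely to the reader.
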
  
\begin{proof}
It is straightforward to calculate the homology of each chain complex.
For 1) consider:
\[\begin{tikzcd}[row sep=small] 
cok_n(M)= & \dots\ar[r] & M_{n+1}\ar[r, "\delta_{n+1}"] & M_n \ar[r, "e_n"]& \delta_n(M_n)\ar[r]   &0\ar[r] & \dots  
\\
ker_n(M)= & \dots\ar[r] & M_{n+1}\ar[r, "\delta_{n+1}'"] & ker(\delta_n) \ar[r]& 0\ar[r]   &0\ar[r] & \dots 
\end{tikzcd} \]

For 2) consider:
\[\begin{tikzcd}[row sep=small] 
\frac{M}{cok_n(M)}= & \dots\ar[r] & 0\ar[r] & 0 \ar[r]& \frac{M_n}{\delta_n(M_{n+1})}\ar[r, "\delta_{n-1}'"]   & M_{n-2}\ar[r] & \dots  
\\
\frac{M}{ker_n(M)}= & \dots\ar[r] & 0\ar[r] & \delta_n(M_n) \ar[r, "m_n"]& M_{n-1}\ar[r]   &M_{n-2}\ar[r] & \dots 
\end{tikzcd} \]

For 3) consider:
\[\begin{tikzcd}[row sep=small] 
\frac{cok_n(M)}{ker_n(M)}= & \dots\ar[r] & 0\ar[r] & \frac{M_n}{ker(\delta_n)} \ar[r, "\cong"]& \delta_n(M_n) \ar[r]  & 0\ar[r] & \dots
\end{tikzcd}
\]

For 4) and 5) the following chain complexes have the same homology:
\[
\begin{tikzcd}[row sep=small, column sep=scriptsize] 
\frac{ker_n(M)}{ker_m(M)}=   &\dots\delta_m(M_m) \ar[r] & M_{m-1} \ar[r]& \dots \ar[r] & M_{n+1} \ar[r] & ker(\delta_n)\ar[r] &0\dots \\
\frac{ker_n(M)}{cok_m(M)}=   & \dots 0 \ar[r] & \frac{M_{m-1}}{\delta_m(M_m)} \ar[r]& \dots \ar[r] & M_{n+1} \ar[r] & ker(\delta_n)\ar[r] &0\dots \\  
\frac{cok_n(M)}{ker_m(M)}=   &\dots\delta_m(M_m) \ar[r] & M_{m-1} \ar[r]& \dots \ar[r] & M_{n+1} \ar[r] & M_n \ar[r]& \delta_n(M_n)\dots  \\
\frac{cok_n(M)}{cok_m(M)}=   &\dots 0 \ar[r] & \frac{M_{m-1}}{\delta_m(M_m)} \ar[r]& \dots \ar[r] & M_{n+1} \ar[r] &  M_n \ar[r]& \delta_n(M_n)\dots  
\end{tikzcd}
\]
\end{proof}

\section{Torsion theories induced by $tr_n \dashv cosk_n$}

We define the torsion theories $\mu_{\geq n}$, simplicial analogues for $\mathcal{MN}_n$ which similarly to the case of chain complexes, are defined by a localization $tr_n \dashv cosk_n$ in simplicial objects. The torsion category of $\mu_{\geq n}$ is the category of simplcial groups such that they are trivial below degree $n$. First, we recall some basic properties of simplicial objects.

Following \cite{GabZis67}, the simplicial category $\Delta$ has, as objects, finite ordinals $[n]=\{0,1,\dots n\}$ and as morphisms  monotone functions. In particular, we have the morphisms $\delta^n_i:[n-1] \to [n]$ the  injection which does not take the value $i\in [n]$ and $\sigma^n_i:[n+1] \to [n]$ the surjection where $\sigma(i)=\sigma(i+1)$. Any morphism $\mu$ in $\Delta$ can written uniquely as: \[\mu=\delta_{i_s}^n\delta_{i_s-1}^{n-1}\dots\delta^{n-t+1}_{i_1}\sigma^{m-t}_{j_t}\dots\sigma_{j_2}^{m-2}\sigma_{j_1}^{m-1}\, .\]
such that $n \geq i_s>\dots >i_1\geq 0$, $0\leq j_t< \dots <j_1<m$ and $n=m-t+s$.

The category of simplicial objects in a category $\X$ is the functor category $Simp(\X)=[\Delta^{op}, \X]$. Thus, a simplicial object $X: \Delta^{op}\to \X$ corresponds to: a family of objects $\{X_n\}_{n\in \mathbb{N}}$ in $\X$, the \textit{face morphisms} $d_i: X_n\to X_{n-1}$ and the \textit{degeneracies morphisms} $s_i: X_n \to X_{n+1}$ satisfying the \textit{simplicial identities}:
\begin{align*}
 d_i d_j &= d_{j-1} d_i   \quad   \mbox{if} \quad i<j  \\ 
 s_i s_j &= s_{j+1} s_i   \quad  \mbox{if} \quad i\leq j \\
 d_i s_j &= 
 \left\{\begin{array}{cc}
  s_{j-1} d_i & \mbox{if} \quad i<j  \\
 1  & \mbox{if} \quad i=j\, \mbox{or}\,\ i=j+1 \\
  s_j d_{i-1} & \mbox{if} \quad i>j+1\, .
 \end{array}\right. 
\end{align*} 

\begin{strc} Let $X$ be a simplicial object in a pointed category with pullbacks. The Moore normalization functor $N: Simp(\X) \to \chn$ is given by $N(X)_0=X_0$,
\[N(X)_n = \bigcap_{i=0}^{n-1} ker(d_i: X_n \to X_{x-1})\]
and differentials $\delta_n= d_n \circ \cap_i ker(d_i): N(X)_n \to N(X)_{n-1}$ for $n \geq 1$.

The functor $N$ preserves finite limits. In \cite{EveLin04}, it was proved that if $\X$ is a semi-abelian category $N$ also preserves regular epimorphisms, and hence, it preserves short exact sequences. Moreover, for a simplicial object $X$ the Moore complex $N(X)$ is a proper chain complex and we can define the $n$-homology object of a simplicial object $X$ as:
\[H_n(X)= H_n(N(X)). \]
The objects $H_n(N(X))$ are internal abelian objects for $n \geq 1$ (see \cite{EveLin04}). 

 This generalises the results proven by Moore for the case of simplicial groups, where the homotopy groups of a simplicial group $X$ can be calculated as $\pi_n(X)= H_n(N(X))$ (see, for example \cite{May67}).
\end{strc}

\begin{strc} If $\Delta_n$ is the full subcategory of $\Delta$ with objects $[m]$ for $m \leq n$, an $n$-truncated simplicial object $X$ in $\X$ is a functor $X: \Delta_n^{op} \to \X$. Let $Simp_n(\X)$ be the category of $n$-truncated simplicial objects, then there is truncation functor:
\[tr_n: \begin{tikzcd} Simp(\X)\ar[r] & Simp_n(X) \end{tikzcd}\]
which simply forgets the objects $X_i$  and the morphisms $s_i,\, d_i$ for degrees $i> n$.

It is a standard application of Kan extensions that if $X$ is finitely complete/cocomplete (as is the case if $\X$ is semi-abelian) each functor $tr_n$ admits a left/ right adjoint named the $n$-skeleton and $n$-coskeleton, respectively: $sk_n \dashv tr_n \dashv cosk_n$.  We will write $Sk_n=sk_n tr_n$ and $Cosk_n=cosk_ntr_n$. 

If $\X$ has finite limits, the $n$-coskeleton of an $n$-truncated simplicial object $X$ is described as follows. For an $n$-truncated simplicial object $X$ the \textit{simplicial kernel} of the face morphisms $d_0, \dots, d_n: X_n \to X_{n-1}$ is an object $\Delta_{n+1}$ with morphisms $\pi_0, \dots, \pi_{n+1}: \Delta_{n+1} \to X_n$ such that $d_i\pi_j=d_{j-1}\pi_i$ for all $i<j$ and it is universal with this property: given a family of morphisms $p_0, \dots, p_{n+1}: Y \to X_n$ such that $d_ip_j=d_{j-1}p_i$ for all $i<j$ then there is a unique morphism $\alpha:Y \to \Delta_{n+1}$ such that $\pi_i\alpha= p_i$. Moreover, the universal property of the simplicial kernel $\Delta_{n+1}$ allows to define degeneracies morphisms $s_i: X_n \to \Delta_{n+1}$. So, the simplicial kernel of $X$ defines an $(n+1)$-truncated simplicial object. Finally, the $cosk(X)$ is given by the iteration of successive simplicial kernels. 

For $n=0$, the $0$-coskeleton is known as the indiscrete functor $Ind: \X \to Simp(\X)$ given by:
\[\begin{tikzcd}[row sep=huge, column sep=large] 
Ind(X)=  & \dots\ar[r, shift left=3, "\pi_4"]\ar[r, shift right=3, "\pi_0"']\ar[r, phantom, "\vdots", shift left] 
&  X^4\ar[r, shift left=3, "\pi_4"]\ar[r, shift right=3, "\pi_0"']\ar[r, phantom, "\vdots", shift left] 
& X^3\ar[r, shift left=3, "\pi_4"]\ar[r, shift right=3, "\pi_0"']\ar[r, phantom, "\vdots", shift left] & X^2  \ar[r, shift left=3, "\pi_1"]\ar[r, shift right=3, "\pi_0"'] 
& X\, . \ar[l, "s_0"] \end{tikzcd} \] 
where $X^n$ is $n$-fold product of $X$ and the degeneracies are defined by the product projections.

For $\X=Grps$, the simplicial kernel $\Delta_{n+1}$ of a $n$-truncated simplicial group $X$ can be described as the subgroup of $X_n^{n+2}$ of $(n+2)$-tuples $(x_0, \dots, x_{n+1})$ such that $\pi_i(x_j)=\pi_{j-1}(x_i)$ for $i<j$ and where $\pi_i$ are the product projections.
\end{strc}

The following result was first proved for simplicial groups in \cite{Con84}. The generalization is straighforward. 

\begin{teo}\label{norcosk} Let $\X$ be a pointed category with finite limits. For a simplicial object $X$ with corresponding Moore complex $M$, then the Moore complex of $n$-coskeleton $Cosk_n(X)$ satifies:
\begin{itemize}
\item $N(Cosk_n(X))_i= M_i$ for $n \leq i$;
\item $N(Cosk_n(X))_{n+1}= ker(\delta_n: M_n \to M_{n+1})$;
\item $N(Cosk_n(X))_i=0$ for $i> n+1$.
\end{itemize}
\end{teo}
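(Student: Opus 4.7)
The plan is to unpack the iterative simplicial-kernel construction underlying $Cosk_n(X) = cosk_n(tr_n(X))$ and verify the three bullets one at a time; I read the first as ``$i \leq n$'', since otherwise it would be inconsistent with the $i = n+1$ case. For degrees $i \leq n$ one observes that the coskeleton of an $n$-truncated object leaves the truncated part untouched, so $Cosk_n(X)_i = X_i$ compatibly with all relevant face and degeneracy maps, and consequently $N(Cosk_n(X))_i = M_i$.

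The essential work is at degree $n+1$, where $Cosk_n(X)_{n+1} = \Delta_{n+1}$ is the simplicial kernel of the faces $d_0,\dots,d_n : X_n \to X_{n-1}$, equipped with structure maps $\pi_0,\dots,\pi_{n+1}: \Delta_{n+1} \to X_n$ that are precisely the face maps of $Cosk_n(X)$ at this level and that satisfy $d_i\pi_j = d_{j-1}\pi_i$ for $i<j$. I would compute $N(Cosk_n(X))_{n+1} = \bigcap_{k=0}^n \ker(\pi_k)$ by specialising these relations to $j=n+1$: on the subobject where $\pi_0=\cdots=\pi_n = 0$ one obtains $d_i\pi_{n+1} = d_n\pi_i = 0$ for every $i = 0,\dots,n$. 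The vanishing for $i < n$ forces $\pi_{n+1}$ to factor through $M_n = \bigcap_{i<n}\ker(d_i)$, and the case $i = n$ forces this factorisation further through $\ker(\delta_n)$. Conversely, any morphism $Z \to \ker(\delta_n)$ extends uniquely via the universal property of $\Delta_{n+1}$ to a morphism $Z \to \Delta_{n+1}$ with first $n+1$ projections equal to $0$, the required compatibility equations collapsing to $0 = 0$. This gives the desired isomorphism $N(Cosk_n(X))_{n+1} \cong \ker(\delta_n)$.

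For $i > n+1$ I would induct on $i$. At each stage $Cosk_n(X)_i$ is the simplicial kernel of the face maps of $Cosk_n(X)_{i-1}$ with projections $\rho_0,\dots,\rho_i$, and an element of $\bigcap_{k=0}^{i-1}\ker(d_k)$ has $\rho_0 = \cdots = \rho_{i-1} = 0$; the simplicial-kernel relations with $j = i$ then force the remaining projection $\rho_i: Z \to Cosk_n(X)_{i-1}$ to satisfy $d_k \rho_i = 0$ for every $k = 0,\dots,i-1$, i.e.\ to factor through $\bigcap_{k=0}^{i-1}\ker(d_k: Cosk_n(X)_{i-1} \to Cosk_n(X)_{i-2})$. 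For $i > n+2$ this intersection is contained in $N(Cosk_n(X))_{i-1}$, which vanishes by induction; for the base case $i = n+2$ it equals $\bigcap_{k=0}^{n+1}\ker(\pi_k)$, which is zero because the $\pi_k$ are jointly monic by the very definition of $\Delta_{n+1}$ as a limit.

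The step I expect to be the main obstacle is the identification at degree $n+1$: it must be executed element-free in a pointed finite-limit category, threading the universal properties of $\Delta_{n+1}$, of $M_n$ as an iterated kernel, and of $\ker(\delta_n)$ through the simplicial identities carefully. Once that step is secured, the higher-degree vanishing reduces to a mechanical induction with the same relations.
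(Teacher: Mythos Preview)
Your proof is correct and is exactly the standard direct computation via iterated simplicial kernels. The paper itself does not supply a proof of this theorem: it attributes the result for simplicial groups to Conduch\'e \cite{Con84} and remarks that the generalisation is straightforward, so there is no argument to compare against. Your identification of the typo in the first bullet (it should read $i \leq n$) is also correct; note there is a second typo in the statement, where $\ker(\delta_n : M_n \to M_{n+1})$ should be $\ker(\delta_n : M_n \to M_{n-1})$.
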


In other words, the Moore normalization $N$ commutes up to isomorphism with the coskeleton functors for simplicial objects and chain complexes:
\[\begin{tikzcd} Simp(\X)\ar[r, "N"]\ar[d, "tr_n"', bend right]\ar[d, phantom, "\dashv"] & \chn \ar[d, "\mathbf{tr}_n"', bend right]\ar[d, phantom, "\dashv"] \\ Simp_n(X)\ar[u, "cosk_n"', bend right]\ar[r,"N"] & \chn_{n \geq}\ar[u,"\mathbf{cosk}_n"', bend right] \end{tikzcd} \]

\begin{defi} Let $\X$ be a semi-abelian category. For each $n>1$ we have that $tr_{n-1} \dashv cosk_{n-1}$ is a localization since $tr_{n-1}$ admits a left adjoint, namely $sk_{n-1}$. Then, by Theorem \ref{loctt}, it defines a hereditary torsion theory in $Simp(\X)$ which will be written as:
\[\mu_{\geq n}:=(\T_{tr_{n-1}}, \F_{tr_{n-1}}).\]
We will also write $\mu_{\geq n}: Simp(\X)\to Simp(\X)$ for the associated idempotent radical.

By definition, the category $\T_{tr_{n-1}}$ is the full subcategory of simplicial objects $X$ with $X_i=0$ for $ n-1 \geq i$.
\end{defi}
 
The torsion theory $\mu_{\geq 1}=(\T_{tr_{0}}, \F_{tr_{0}})$ naturally extends the torsion theory $(Ab(\X), Eq(\X))$ in $Grpd(\X)$, to this end we need to recall a result about Mal'tsev categories.
 
\begin{strc} A category $\X$ is called a Mal'tsev category if any internal reflexive relation is an equivalence relation. Semi-abelian categories are Mal'tsev categories. 
\end{strc} 

\begin{prop}\label{duv} \cite{Duv19} Let $\X$ be a regular Mal'tsev category. The category $Grpd(\X)$ is closed under subobjects in $Simp(\X)$. 
\end{prop}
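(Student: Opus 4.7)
The plan is to show that any monomorphism $\iota : X \hookrightarrow \mathcal{N}(G)$ in $Simp(\X)$, with $G \in Grpd(\X)$, is isomorphic to $\mathcal{N}(H)$ for some internal subgroupoid $H$ of $G$. The natural candidate for $H$ is the reflexive subgraph of $G_1 \rightrightarrows G_0$ obtained by taking $H_0 := X_0$ and $H_1 := X_1$, with source, target, and identity-section inherited from the simplicial structure of $X$. The proof then hinges on showing (a) that $H$ admits an internal groupoid structure, i.e.\ that the composition morphism $c : G_1 \times_{G_0} G_1 \to G_1$ of $G$ restricts to $H_1 \times_{H_0} H_1 \to H_1$, and (b) that the iterated Segal pullbacks identify $\mathcal{N}(H)$ with $X$ in all degrees.

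Both (a) and (b) will follow once one knows that the Segal map
\[ \sigma := (d_2, d_0) : X_2 \longrightarrow X_1 \times_{X_0} X_1 \]
is an isomorphism: indeed $d_1 \circ \sigma^{-1}$ then furnishes the restricted composition on $H$, and an induction using the factorisations $\mathcal{N}(G)_n \cong \mathcal{N}(G)_2 \times_{\mathcal{N}(G)_1} \mathcal{N}(G)_{n-1}$ pulled back along $\iota$ yields $X_n \cong X_1 \times_{X_0} \cdots \times_{X_0} X_1$ in every degree, whence $X \cong \mathcal{N}(H)$.

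That $\sigma$ is a monomorphism is immediate, since the composite
\[ X_2 \xrightarrow{\sigma} X_1 \times_{X_0} X_1 \hookrightarrow G_1 \times_{G_0} G_1 \cong \mathcal{N}(G)_2 \]
coincides with the componentwise inclusion $\iota_2$ and the second arrow is mono by pullback-stability of monos. The main obstacle, and the point at which the regular Mal'tsev hypothesis on $\X$ is essential, is to show that $\sigma$ is also a regular epimorphism, and hence an isomorphism. Here one invokes the standard fact that in a Mal'tsev category the property of being an internal groupoid is determined entirely by the underlying reflexive graph: a reflexive subgraph of an internal groupoid inherits at most one groupoid structure, and this structure is witnessed precisely by the $2$-simplices of the ambient nerve. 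Combined with the Segal isomorphism for $\mathcal{N}(G)$ and the naturality of $\sigma$ with respect to $\iota$, this forces $\sigma$ to be an isomorphism, whence the internal groupoid $H$ exists and is a subgroupoid of $G$ with $X \cong \mathcal{N}(H)$.
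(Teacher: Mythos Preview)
The paper does not prove this proposition; it merely cites \cite{Duv19}. So there is no in-paper argument to compare against, and your task is really to supply a self-contained proof.

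Your strategy is the right one: reduce to showing that the Segal map $\sigma = (d_2,d_0)\colon X_2 \to X_1 \times_{X_0} X_1$ is an isomorphism. The monomorphism half is fine. The gap is in the surjectivity half. You write that the Mal'tsev hypothesis ensures ``the property of being an internal groupoid is determined entirely by the underlying reflexive graph,'' and that ``combined with the Segal isomorphism for $\mathcal{N}(G)$ and the naturality of $\sigma$ with respect to $\iota$, this forces $\sigma$ to be an isomorphism.'' But the first fact is a \emph{uniqueness} statement (the forgetful functor $Grpd(\X)\to RG(\X)$ is full and faithful), not an \emph{existence} statement; and the naturality square
\[
\begin{tikzcd}
X_2 \ar[r,"\sigma"] \ar[d,hook] & X_1 \times_{X_0} X_1 \ar[d,hook] \\
\mathcal{N}(G)_2 \ar[r,"\cong"] & G_1 \times_{G_0} G_1
\end{tikzcd}
\]
only tells you that $\sigma$ is mono, which you already knew. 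Nothing here forces $\sigma$ to be epi: in a general regular category a subsimplicial object of a nerve need not be a nerve, and the failure is exactly that $X_2$ can be a proper subobject of $X_1\times_{X_0}X_1$. So as written, the argument is circular at the crucial point.

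What is actually needed is a genuine use of the Mal'tsev property of $\X$ to produce the missing simplices. One standard route (essentially the one in \cite{Duv19}) is to observe that for an internal groupoid the three maps $(d_i,d_j)\colon \mathcal{N}(G)_2 \to G_1\times_{G_0}G_1$ are \emph{all} isomorphisms, so that $X_2$, viewed inside $G_1\times_{G_0}G_1$ via $(d_2,d_0)$, also carries the reflexive pair induced by $(d_1,d_0)$ (and $(d_2,d_1)$); the Mal'tsev axiom then upgrades these reflexive relations to equivalence relations, and the resulting symmetry/transitivity is exactly what forces $X_2 = X_1\times_{X_0}X_1$. Equivalently, one checks that the kernel pairs of $d_0,d_1\colon X_1\rightrightarrows X_0$ commute as equivalence relations on $X_1$ (a condition equivalent, in the regular Mal'tsev setting, to the reflexive graph underlying a groupoid), and this commutation is inherited from $G$. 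Either way, you must invoke the Mal'tsev condition at the level of relations on $X_1$, not merely cite uniqueness of groupoid structures.
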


\begin{prop} Let $\X$ be a semi-abelian category and, for $n=0$, consider the torsion theory $\mu_{\geq 1}=(\T_{tr_0}, \F_{tr_0})$ in $Simp(\X)$. The torsion-free subcategory $\F_{tr_0}$ is equivalent to $Eq(\X)$. On the other hand, $Ab(\X) \subset \T_{tr_0}$.
\end{prop}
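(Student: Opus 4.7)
The plan is to unpack the descriptions of $\T_{tr_0}$ and $\F_{tr_0}$ provided by Theorem \ref{loctt}. By definition $X \in \T_{tr_0}$ iff $X_0 \cong 0$, and $X \in \F_{tr_0}$ iff the unit $\eta_X : X \to cosk_0 tr_0(X) = Cosk_0(X) = Ind(X_0)$ is a monomorphism. Since monomorphisms in $Simp(\X)$ are computed levelwise, the latter amounts to each component $X_n \to X_0^{n+1}$ of $\eta_X$ being monic.

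For the forward inclusion $Eq(\X) \hookrightarrow \F_{tr_0}$, I would note that for an equivalence relation $E$ the nerve $\mathcal{N}(E)$ has $\mathcal{N}(E)_n$ the $n$-fold pullback $E_1\times_{E_0}\cdots\times_{E_0}E_1$, and the level-$n$ component of the unit is the induced map to $E_0^{n+1}$. Since $(d_0,d_1):E_1 \to E_0^2$ is monic and monomorphisms are stable under finite limits, the induced morphism $\mathcal{N}(E)_n \to E_0^{n+1}$ is again monic, so $\mathcal{N}(E) \in \F_{tr_0}$. Conversely, suppose $X \in \F_{tr_0}$. Then the unit exhibits $X$ as a subobject of $Ind(X_0)$ in $Simp(\X)$, and $Ind(X_0)$ is precisely the nerve of the indiscrete internal groupoid $X_0^2 \rightrightarrows X_0$, hence belongs to $Grpd(\X) \subset Simp(\X)$. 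By Proposition \ref{duv}, $Grpd(\X)$ is closed under subobjects in $Simp(\X)$, so $X$ is itself (the nerve of) an internal groupoid. Since $Ind(X_0)_1 = X_0\times X_0$ with $(d_0,d_1)$ being the identity, the level-$1$ inclusion $X_1 \hookrightarrow X_0^2$ is exactly $(d_0,d_1)$, so $X$ is an equivalence relation. These two assignments are mutually inverse because the nerve $\mathcal{N}: Grpd(\X) \to Simp(\X)$ is fully faithful, which yields the equivalence $\F_{tr_0} \simeq Eq(\X)$.

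For the inclusion $Ab(\X) \subset \T_{tr_0}$, I would view an internal abelian object $A$ inside $Grpd(\X)$ as the one-object internal group $A \rightrightarrows 0$; its nerve $X$ has $X_0 = 0$, so $tr_0(X) \cong 0$ and $X \in \T_{tr_0}$ by the description above. The main obstacle I anticipate is the reverse half of the first claim: only the level-$1$ monomorphism $(d_0,d_1)$ has a direct interpretation in terms of equivalence relations, and it is not a priori clear that monicness of $X_n \to X_0^{n+1}$ at all levels forces $X$ to be the nerve of a groupoid. Proposition \ref{duv} does precisely this work, promoting the subobject-of-$Ind(X_0)$ condition to a full internal groupoid structure on $X$, after which the level-$1$ condition suffices to conclude it is an equivalence relation.
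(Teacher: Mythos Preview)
Your proposal is correct and follows essentially the same route as the paper: both identify $\F_{tr_0}$ with simplicial objects having monic unit into $Ind(X_0)$, invoke Proposition \ref{duv} to deduce that such an $X$ is an internal groupoid, and then read off the equivalence-relation condition from the level-$1$ component $(d_0,d_1)$; the treatment of $Ab(\X)\subset\T_{tr_0}$ via $X_0=0$ is also identical. Your argument is slightly more explicit in justifying the converse inclusion $Eq(\X)\subseteq\F_{tr_0}$ (via pullback stability of monomorphisms), whereas the paper simply asserts that an equivalence relation has monic unit.
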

\begin{proof} 
For $n=0$ we have that $tr_0 \dashv cosk_0  \cong ()_0 \dashv Ind$. The unit of $()_0 \dashv Ind$ for a simplicial object $X$ is given by
\[\begin{tikzcd}[row sep=huge, column sep=large] 
X=\ar[d, "\eta_X"]  & \dots\ar[r, shift left=3, "d_4"]\ar[r, shift right=3, "d_0"']\ar[r, phantom, "\vdots", shift left] 
& X_3\ar[r, shift left=3, "d_3"]\ar[r, shift right=3, "d_0"']\ar[r, phantom, "\vdots", shift left] \ar[d, "{(d_0,d_1,d_2,d_3)}"] 
& X_2\ar[r, shift left=3, "d_2"]\ar[r, shift right=3, "d_0"'] \ar[r, phantom, "\vdots", shift left]\ar[d, "{(d_0,d_1,d_2)}"] 
& X_1\ar[r, shift left=3, "d_1"]\ar[r, shift right=3, "d_0"']\ar[d, "{(d_0,d_1)}"]
& X_0\ar[l, "s_0"]\ar[d, "1"] 
\\ Ind(X)=  & \dots\ar[r, shift left=3, "\pi_4"]\ar[r, shift right=3, "\pi_0"']\ar[r, phantom, "\vdots", shift left] 
&  X^4_0\ar[r, shift left=3, "\pi_4"]\ar[r, shift right=3, "\pi_0"']\ar[r, phantom, "\vdots", shift left] 
& X_0^3\ar[r, shift left=3, "\pi_4"]\ar[r, shift right=3, "\pi_0"']\ar[r, phantom, "\vdots", shift left] & X_0^2  \ar[r, shift left=3, "\pi_1"]\ar[r, shift right=3, "\pi_0"'] 
& X_0\, . \ar[l, "s_0"] \end{tikzcd} \] 
Since $\X$ is a Mal'tsev category and by Proposition \ref{duv}, if $X$ has a monic unit $\eta_X$ then $X$ is a groupoid since $Ind(X)$ is an equivalence relation. Finally, since $(d_0,d_1): X_1 \to X_0^2$ is monic then $X$ is an equivalence relation and $\F_{tr_0} \subseteq Eq(\X)$. Conversely, an equivalence relation always has a monic unit $\eta_X$.

On the other hand, an internal abelian group $X$ has $X_0=0$, so $Ab(\X) \subset \T_{tr_0}$.
\end{proof}

\begin{teo} Let $\X$ be a semi-abelian category and $X$ a simplicial object with Moore chain complex $M$. Then, the normalization functor $N$ maps the short exact sequence of $X$ given the torsion theory $\mu_{\geq n+1 }$ in $Simp(X)$ into the short exact sequence of $M$ given by the torsion theory $\mathcal{KER}_{n+1}$ in $\pch$. 

Moreover, $N$ maps the torsion theory $\mu_{\geq n+1}$ into $\mathcal{KER}_{n+1}$:
\[ \begin{tikzcd}[row sep=large]
\T_{tr_n}\ar[r, bend left]\ar[r, phantom, "\perp"]\ar[d, "N"] & Simp(\X)\ar[r, bend left] \ar[l, bend left]\ar[r, phantom, "\perp"]\ar[d, "N"]& \F_{tr_n}\ar[l, bend left]\ar[d, "N"]
\\ \pch_{\geq n+1}\ar[r, bend left] \ar[r, phantom, "\perp"]& \pch \ar[r, bend left]\ar[l, bend left]\ar[r, phantom, "\perp"]& \mathcal{MN}_n\ar[l, bend left]
\end{tikzcd} \]
i.e. the subcategory $\T_{tr_n}$ is mapped into $\pch_{\geq n+1}$ and $\F_{tr_n}$ into $\mathcal{MN}_n$. 
\end{teo}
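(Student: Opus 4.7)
The plan is to apply the normalization functor $N$ to the short exact sequence of $X$ produced by $\mu_{\geq n+1}$ and identify the result, termwise, with the sequence given by $\mathcal{KER}_{n+1}$ on $M=N(X)$. By Theorem \ref{loctt}, the former sequence is
\[ 0 \to ker(\eta_X) \to X \to \eta_X(X) \to 0, \]
where $\eta_X : X \to Cosk_n(X)$ is the unit of $tr_n \dashv cosk_n$ and $\eta_X(X)$ is its normal epi-mono image. Since $N$ preserves finite limits and short exact sequences \cite{EveLin04}, applying it yields a short exact sequence in $\pch$, so the task reduces to recognising its two endpoints as $ker_{n+1}(M)$ and $M/ker_{n+1}(M)$.

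The crux is the identification of $N(\eta_X)$ with $\eta_M : M \to \mathbf{Cosk}_n(M)$, the unit of $\mathbf{tr}_n \dashv \mathbf{cosk}_n$. Theorem \ref{norcosk} provides an isomorphism $N(Cosk_n(X)) \cong \mathbf{Cosk}_n(M)$, and under it $N(\eta_X)$ is the identity in degrees $\leq n$, the map $M_{n+1} \to ker(\delta_n)$ factoring $\delta_{n+1}$ through the kernel in degree $n+1$, and zero above --- exactly the unit displayed in (\ref{sesker}) with its index shifted to $n+1$. Because $N$ preserves kernels and normal epi-mono factorisations, this yields $N(ker(\eta_X)) = ker_{n+1}(M) \in \pch_{\geq n+1}$ and $N(\eta_X(X)) = M/ker_{n+1}(M) \in \mathcal{MN}_{n+1}$, simultaneously establishing the short-exact-sequence statement and one direction of the moreover claim on the torsion-free side.

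The remaining torsion inclusion $N(\T_{tr_n}) \subseteq \pch_{\geq n+1}$ is immediate: if $X \in \T_{tr_n}$ then $X_i=0$ for $i \leq n$, and since each $N(X)_i$ is a subobject of $X_i$, we have $M \in \pch_{\geq n+1}$. For the torsion-free side I would argue for general $X \in \F_{tr_n}$: here $\eta_X$ is monic, so $N(\eta_X) : M \to \mathbf{Cosk}_n(M)$ is monic. This forces $M_i = 0$ for $i > n+1$ and exhibits $\delta_{n+1}$ as the composite of a mono $M_{n+1} \hookrightarrow ker(\delta_n)$ with the kernel inclusion $ker(\delta_n) \hookrightarrow M_n$, hence as a monomorphism. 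Properness of $M$ then promotes it to a normal mono: in the factorisation $\delta_{n+1} = m_{n+1} e_{n+1}$, $m_{n+1}$ is a normal mono by assumption, and $e_{n+1}$ must be an isomorphism, being both a normal epi and a monomorphism. So $M \in \mathcal{MN}_{n+1}$.

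The main obstacle I anticipate is the careful verification that $N(\eta_X)$ coincides with $\eta_M$ under the isomorphism of Theorem \ref{norcosk}. This is a naturality statement expressing commutativity of $N$ with the units of the adjunctions $tr_n \dashv cosk_n$ and $\mathbf{tr}_n \dashv \mathbf{cosk}_n$ (the square displayed after Theorem \ref{norcosk}), and it amounts to matching, degree by degree, the universal property defining the simplicial coskeleton with that of the chain coskeleton. Once this identification is recorded, exactness of $N$ together with Theorem \ref{norcosk} handles everything else mechanically.
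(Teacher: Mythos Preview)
Your proposal is correct and follows the same approach as the paper: use that $N$ preserves short exact sequences and normal epi--mono factorisations, together with the commutation $N \circ Cosk_n \cong \mathbf{Cosk}_n \circ N$ from Theorem \ref{norcosk}, to identify the image under $N$ of the $\mu_{\geq n+1}$-sequence with the $\mathcal{KER}_{n+1}$-sequence of $M$. The paper deduces the subcategory inclusions in one line from preservation of this sequence (when $X$ lies in $\T_{tr_n}$ or $\F_{tr_n}$ the sequence degenerates), whereas your separate direct arguments for each inclusion are a harmless elaboration; your flagging of the naturality identification $N(\eta_X)\cong \eta_M$ as the one point needing care is apt, and the paper simply asserts it.
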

\begin{proof}
Since $\X$ is semi-abelian the normalization functor $N$ preserves short exact sequences and also preserves the normal epi/mono factorization of morphisms in $Simp(X)$. Since $N$ commutes (up to isomorphism) with the truncation and coskeleton functors we have that for a simplicial object $X$ and its Moore complex $M$, the functor $N$ maps the short exact sequence in $Simp(\X)$:
\[ \begin{tikzcd} 0\ar[r] & ker(\eta_X) \ar[r, "k"] & X \ar[r, "e"] \ar[rd, "\eta_X"'] & \eta_X(X)\ar[d, "m"]\ar[r] &0 \\ && & Cosk_n(X)& \end{tikzcd} \]
into
\[
\begin{tikzcd}[column sep=scriptsize] 
N(ker(\eta_X)) = \ar[d, hook, "m(k)"] & \hdots \ar[r]
 & M_{n+2}  \ar[r, "\delta_{n+2}"] \ar[d]
  & ker(\delta_{n+1}) \ar[r] \ar[d, "e_{n+1}" ,two heads]
   & 0  \ar[r] \ar[d]     
     & \hdots     \\
N(X) = \ar[d, two heads, "M(e)"] \ar[dd, "N(\eta_x)"', bend right, shift right=10] & \hdots \ar[r]
 & M_{n+2}  \ar[r, "\delta_{n+2}"] \ar[d]
  & M_{n+1} \ar[r, "\delta_{n+1}"] \ar[d, "e_{n+1}" ,two heads]
   & M_n  \ar[r, "\delta_n"] \ar[d]     
     & \hdots     \\
N(\eta_X(X)) = \ar[d, "M(m)"] & \hdots \ar[r]
 & 0 \ar[r] \ar[d]
  & \delta_{n+1}(N_{n+1}) \ar[r, "m_{n+1}",hook] \ar[d]
   & M_n  \ar[r, "\delta_n"] \ar[d]
     & \hdots  \\
N(Cosk_n(X)) = & \hdots \ar[r]
 & 0  \ar[r]
  & ker(\delta_n) \ar[r, "k(\delta_n)", hook]
   & M_n \ar[r, "\delta_n"]
     & \hdots 
\end{tikzcd}\]

Since the short exact sequence of the torsion theories is preserved from $\mu_{\geq n+1 }$ to $\mathcal{KER}_{n+1}$, it follows that $N(\T_{tr_n}) \subseteq \pch_{\geq n+1}$ and $N(\F_{tr_n}) \subseteq \mathcal{MN}_n$. 
\end{proof}

\section{Torsion theories of truncated Moore complexes in simplicial groups}

In order to define the torsion theories $\mu_{n \geq}$, analogues for the torsion theories $\mathcal{COK}_n$ in simplicial objects, we will restrict ourselves to the case of the category of $\X=Grps$. With this stronger assumption the subcategories $\mathcal{M}_{ \geq n}$ and $\mathcal{M}_{n \geq}$ of simplicial groups with trivial Moore complex below/above at order $n$ will appear as torsion/torsion-free subcategories of the torsion theories $\mu_{n \geq}$ and $\mu_{\geq n}$, respectively. 

We need to recall some results of D. Conduch\'e \cite{Con84}. In particular, in a simplicial group $X$ each $X_n$ can be decomposed as successive semi-direct products of the objects of its Moore complex $M_i$ with $i \leq n$. 

\begin{strc}\label{ordersn} \cite{Con84} In order to avoid multiple subscripts we will write $\sigma_i= \bar{i}$ for the degeneracy maps of $\Delta$.
 
 For any  object $[n]=\{0<1<\dots <n\}$ of the simplicial category $\Delta$ we will introduce an order in $S(n)$ the set of surjective maps of $\Delta$ with domain $[n]$.  Any surjective map $\sigma: [n] \to [m]$ is written uniquely as $\sigma=\bar{i_1}\bar{i_2} \dots \bar{i}_{n-m}$ with $i_1<i_2< \dots <i_{n-m}$. We introduce the inverse lexicographic order in $S(n,m)$ the set of surjective maps form $[n]$ to $[m]$: 
\[\bar{i_1}\bar{i_2} \dots \bar{i}_{n-m} < \bar{j_1}\bar{j_2} \dots \bar{j}_{n-m} \quad  \mbox{if} \quad i_{n-1}=j_{n-m}, \dots, i_{s+1}=j_{s+1}, \\\ \mbox{and} \\\ i_s>j_s. \]  
This order extends to $S(n)$ by setting $S(n,m)< S(n,l)$ if $m>l$. 

As an example, for $S(4)$ we have:

\[id_{[4]}<  \bar{3} < \bar{2} < \bar{2}\bar{3} < \bar{1}< \bar{1}\bar{3} < \bar{1}\bar{2}< \bar{1}\bar{2}\bar{3} < \bar{0} <\bar{0}\bar{3} < \bar{0}\bar{2} < \bar{0}\bar{2}\bar{3} < \bar{0}\bar{1}< \bar{0}\bar{1}\bar{3} < \bar{0}\bar{1}\bar{2}< \bar{0}\bar{1}\bar{2}\bar{3}  \]

For a simplicial group $X$ with Moore complex $M$ and a surjective map $\mathbf{i}=\bar{i_1}\bar{i_2} \dots \bar{i}_{r}$ we have $s_\mathbf{i}=s_{i_r}  \dots s_{i_1}$ and $d_\mathbf{i}=d_{i_1} \dots d_{i_r}$. Using the order of $S(n)$ we have a filtration of $X_n$ by the subgroups

\[G_{n, \mathbf{i}}= \bigcap_{\mathbf{j} \geq \mathbf{i}} ker(d_\mathbf{j}).\]
Notice that $G_{n, id}=0$ and $G_{n, \bar{n-1}}=M(X)_n$.
 The order $S(n)$ satisfies for a surjective map $\mathbf{i}:[n] \to [r]$ and its  successor $\mathbf{j}$ we have the semidirect product
\[G_{n, \mathbf{j}} \cong G_{n, \mathbf{i}} \rtimes_{s_\mathbf{i}} M_r. \]
Finally, this implies that $X_n$ decomposes as a sequence of semi-direct products:
\[ X_n=( \dots( M_n \rtimes_{s_{n-1}} M_{n-1}) \rtimes_{s_{n-2}} \dots)\rtimes_{s_{p-1}\dots s_{0}} M_0  \]
\end{strc}

\begin{coro} For each $n \in \mathbb{N}$, the category $\mathcal{M}_{\geq n+1}$ and $\T_{tr_n}$ are equivalent. 

Moreover, the category $\mathcal{M}_{\geq n+1}$ is a torsion subcategory in $Simp(\X)$;
\[(\T_{tr_n}, \F_{tr_n}) \cong (\mathcal{M}_{\geq n+1}, \F_{tr_n}). \]
\end{coro}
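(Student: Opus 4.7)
The plan is to read the statement as an identification of the two subcategory descriptions: $\T_{tr_n}$ was defined via vanishing of the whole $X_i$ in low degrees, while $\mathcal{M}_{\geq n+1}$ imposes vanishing only on the Moore-complex pieces $M_i$. So the task reduces to comparing the condition ``$X_i=0$ for $i\leq n$'' with the condition ``$M_i=0$ for $i\leq n$'', and the bridge is the semidirect-product decomposition of $X_n$ recalled in \ref{ordersn}.

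First I would establish the easy inclusion. If $X$ lies in $\T_{tr_n}$, then $X_i=0$ for $i\leq n$ by definition, and since $M_i=\bigcap_{j<i}\ker(d_j)$ is a subgroup of $X_i$ one immediately gets $M_i=0$ in the same range. Hence $X$ lies in $\mathcal{M}_{\geq n+1}$, and this inclusion is full because both are full subcategories of $Simp(\X)$.

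For the reverse inclusion I would use Conduch\'e's formula from \ref{ordersn}: for every $k$,
\[ X_k \;=\; \bigl(\cdots\bigl(M_k\rtimes_{s_{k-1}} M_{k-1}\bigr)\rtimes_{s_{k-2}}\cdots\bigr)\rtimes_{s_{k-1}\cdots s_0} M_0. \]
If $X\in\mathcal{M}_{\geq n+1}$, i.e.\ $M_i=0$ for all $i\leq n$, then for any $k\leq n$ every factor $M_i$ with $i\leq k\leq n$ is trivial, so the iterated semidirect product collapses and $X_k=0$. Thus $X\in\T_{tr_n}$, giving the equivalence $\T_{tr_n}\simeq\mathcal{M}_{\geq n+1}$.

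Once the two subcategories are shown to coincide, the ``moreover'' part is automatic: by the definition of $\mu_{\geq n+1}$ preceding the corollary, $\T_{tr_n}$ is the torsion subcategory of the torsion theory $\mu_{\geq n+1}$ in $Simp(\X)$, and transporting this through the identification produces the torsion theory $(\mathcal{M}_{\geq n+1},\F_{tr_n})$. The only subtlety I anticipate is purely bookkeeping around the indexing conventions $n\geq i$ vs.\ $i\geq n+1$; the nontrivial mathematical input is exactly Conduch\'e's decomposition, which is quoted above and handles the passage from Moore-complex vanishing to vanishing of the whole simplicial object.
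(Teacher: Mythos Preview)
Your proposal is correct and follows exactly the same route as the paper: both directions of the equivalence are deduced from Conduch\'e's semidirect-product decomposition in \ref{ordersn}, and the ``moreover'' clause is then immediate from the definition of $\mu_{\geq n+1}$. The paper compresses this into a single sentence, but the mathematical content is identical to what you wrote.
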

\begin{proof} It follows immediately from the semidirect decomposition that a simplicial group $X$ has $X_i=0$ for $n>i$ if and only if $M_i=0$ for $n>i$.
\end{proof}

The analogue of the cotruncation functor for simplicial groups was introduced by T. Porter as follows.

\begin{strc} \cite{Por93}\label{porter} There is a cotruncation functor \[Cot_n: Simp(Grp) \to Simp(Grp)\] such that
\begin{equation}\label{cotnorm}
 \begin{tikzcd} Simp(Grp) \ar[d, "Cot_n"]\ar[r,"N"] & chn(Grp)\ar[d, "\mathbf{Cot}_n"]\\ Simp(Grp) \ar[r, "N"] & chn(Grp) \end{tikzcd} 
\end{equation}
commutes up to natural isomorphism, where $N$ is the Moore normalization functor. 
The functor $Cot_n(X)$ is defined as follows:
\[\begin{tikzcd}[row sep=tiny]  Cot_n(X)_i=X_i\quad \mbox{for} \quad n>i\, , \\ Cot_n(X)_n=X_n\, ,  \end{tikzcd} \]
and for $i>n$ the object $Cot_n(X)_i$ is obtained by deleting all $M_k$ for $k>n$ and replacing $M_n$ by $M_n/\delta_{n+1}(M_{n+1})$ in the semi-direct decomposition. 
\end{strc}

We recall some useful properties of this functor.

\begin{prop} \cite{Por93}\label{porter2} Let $\mathcal{M}_{n \geq}$ be the full subcategory of $Simp(Grp)$ defined  by those simplicial groups whose Moore complex is trivial for dimensions greater than $n$. Let $i_n: \mathcal{M}_{n \geq} \to Simp(Grp)$ the inclusion functor then
\begin{enumerate}
\item $Cot_n$ is left adjoint of $i_n$;
\item the unit $\eta_X: X \to Cot_n(X)$ of the adjunction  is a regular epimorphism which induces an isomorphism in $\pi_i(X)$ for $i \leq n$;
\item for any simplicial group $X$, $\pi(Cot_n(X))=0$ for $i>n$;
\item the inclusion $\mathcal{M}_{n \geq} \to \mathcal{M}_{n+1 \geq}$ correspond to a natural epimorphism \[\eta_n: Cot_{n+1} \to Cot_{n}\] and, for a simplicial group $X$, then $Ker(\eta_n(X))$ is a $K(\pi_{n+1}(X), n+1)$-simplicial group (an Eilenberg-Mac Lane simplicial group). 
\end{enumerate}
\end{prop}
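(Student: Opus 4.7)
The plan is to use the commuting square (\ref{cotnorm}) together with the semi-direct product decomposition recalled in \ref{ordersn} to reduce all four items to properties of $\mathbf{Cot}_n$ on the Moore complex, several of which have already been established earlier in the paper.

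For item (1), I would establish the natural bijection $Hom_{Simp(Grp)}(X, i_n Y) \cong Hom_{\mathcal{M}_{n \geq}}(Cot_n X, Y)$ directly. Given $f: X \to Y$ with $Y \in \mathcal{M}_{n \geq}$, the induced chain map $N(f)$ vanishes in all degrees $k > n$ since $N(Y)_k = 0$ there; in particular the restriction of $f_n$ to $M_n \subseteq X_n$ must annihilate $\delta_{n+1}(M_{n+1})$. Reading this observation through the semi-direct decomposition of each $X_k$ forces $f_k$ to factor uniquely through the quotient defining $Cot_n(X)_k$ for every $k \geq n$, and the factorization is compatible with faces and degeneracies by naturality. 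Uniqueness is immediate since $\eta_X$ is degreewise surjective.

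For items (2) and (3), the unit $\eta_X: X \to Cot_n X$ is, in each degree, either the identity or a quotient by $\delta_{n+1}(M_{n+1})$ in the top factor of the semi-direct decomposition; hence it is degreewise a normal epimorphism of groups, which makes $\eta_X$ a regular epimorphism in $Simp(Grp)$. Since $N(Cot_n X) \cong \mathbf{Cot}_n(N X)$ coincides with $N(X)/cok_{n+1}(N X)$ in the notation of Definition \ref{cot}, Lemma \ref{homology}(2) yields
\[
\pi_i(Cot_n X) \;=\; H_i\bigl(N(X)/cok_{n+1}(N X)\bigr) \;=\; \begin{cases} \pi_i(X) & i \leq n, \\ 0 & i > n, \end{cases}
\]
which covers both statements simultaneously.

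For item (4), the identity $\mathbf{Cot}_n \mathbf{Cot}_{n+1} \cong \mathbf{Cot}_n$ at the chain complex level combined with (\ref{cotnorm}) yields a natural isomorphism $Cot_n \circ Cot_{n+1} \cong Cot_n$, so the unit of $Cot_n \dashv i_n$ evaluated at $Cot_{n+1} X$ furnishes the required natural epimorphism $\eta_n: Cot_{n+1} \to Cot_n$. For the kernel, the short exact sequence $0 \to Ker(\eta_n(X)) \to Cot_{n+1}(X) \to Cot_n(X) \to 0$ in simplicial groups is a Kan fibration, hence induces a long exact sequence of homotopy groups; combining with items (2) and (3) one obtains $\pi_i(Ker(\eta_n(X))) = 0$ for $i \neq n+1$ and $\pi_{n+1}(Ker(\eta_n(X))) \cong \pi_{n+1}(X)$, which is the defining condition for a $K(\pi_{n+1}(X), n+1)$. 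The main obstacle will be in (1), since one must carefully track how a morphism of simplicial groups interacts with the semi-direct decomposition level by level rather than merely with the underlying Moore complex.
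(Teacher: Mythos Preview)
The paper does not give its own proof of this proposition: it is quoted verbatim from \cite{Por93} and used as a black box. There is therefore nothing in the paper to compare your argument against directly.

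That said, your reconstruction is sound and in fact anticipates how the paper itself later re-derives parts (2)--(4). Your use of Lemma~\ref{homology}(2) for items (2) and (3), via the identification $N(Cot_n X)\cong M/cok_{n+1}(M)$, is exactly the mechanism the paper employs in Theorem~\ref{funsimgrp} to generalise these statements; and your long-exact-sequence argument for (4) is equivalent to what the paper does in Corollary~\ref{kanEL} via Theorem~\ref{funsimgrp}(6). For (4) you can streamline slightly: once (1) is established, the inclusion $\mathcal{M}_{n\geq}\subseteq\mathcal{M}_{n+1\geq}$ immediately gives a factorisation of the unit $X\to Cot_n(X)$ through $Cot_{n+1}(X)$, which produces $\eta_n$ without appealing to the chain-level identity $\mathbf{Cot}_n\mathbf{Cot}_{n+1}\cong\mathbf{Cot}_n$ and the conservativity of $N$.

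The only genuine gap is the one you flag yourself in item (1): showing that $f_k$ factors through $Cot_n(X)_k$ requires knowing that the kernel of $X_k\twoheadrightarrow Cot_n(X)_k$ is the normal closure of the images under the various $s_{\mathbf i}$ of $M_j$ for $j>n$ together with $\delta_{n+1}(M_{n+1})\subseteq M_n$. This is implicit in Porter's construction (\ref{porter}) but is not spelled out in the present paper, so you would need either to cite \cite{Por93} for it or to extract it carefully from the filtration in \ref{ordersn}.
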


This cotruncation functor for simplicial groups is normal and thus defines a torsion theory in $Simp(Grp)$ as in Theorem \ref{tfnormal}.

\begin{coro} The subcategory $\mathcal{M}_{n\geq }$ of $Simp(Grp)$ given by the simplicial groups with trivial Moore complex for dimension greater than $n$ is a torsion-free subcategory of $Simp(Grp)$. The torsion theory is given by the pair
\[\mu_{n \geq}=(Ker(Cot_n), \mathcal{M}_{n \geq}).\]
\end{coro}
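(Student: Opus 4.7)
The plan is to verify the hypotheses of Theorem \ref{tfnormal}, in the form of condition (4), for the adjunction $Cot_n \dashv i_n$. First I would record that this adjunction is normal epi-reflective: by Proposition \ref{porter2}(1) it is an adjunction, by Proposition \ref{porter2}(2) each component $\eta_X$ of the unit is a regular epimorphism, and in the semi-abelian category $Simp(Grp)$ regular epimorphisms coincide with normal epimorphisms. It therefore remains to establish that the reflector $Cot_n$ is normal, i.e.\ that $Cot_n(ker(\eta_X))=0$ for every simplicial group $X$; Theorem \ref{tfnormal} will then immediately produce the torsion theory $(Ker(Cot_n), \mathcal{M}_{n \geq})$, delivering both claims at once.

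The key computation is to describe the Moore complex of $ker(\eta_X)$. Since $N$ is exact on simplicial groups \cite{EveLin04}, applying it to the short exact sequence
\[\begin{tikzcd} 0 \ar[r] & ker(\eta_X) \ar[r] & X \ar[r,"\eta_X"] & Cot_n(X) \ar[r] & 0 \end{tikzcd}\]
and using the natural isomorphism $N \circ Cot_n \cong \mathbf{Cot}_n \circ N$ from (\ref{cotnorm}) gives a degreewise short exact sequence of proper chain complexes whose third term is $\mathbf{Cot}_n(N(X))$. Writing $M = N(X)$ and reading off each degree yields $N(ker(\eta_X))_i = 0$ for $i<n$, $N(ker(\eta_X))_n = \delta_{n+1}(M_{n+1})$ (the image of the differential in the Moore complex of $X$), and $N(ker(\eta_X))_i = M_i$ for $i>n$, with differentials inherited from $M$.

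Applying $\mathbf{Cot}_n$ to this complex replaces the degree-$n$ term by the cokernel of the surjection $\delta_{n+1}: M_{n+1} \to \delta_{n+1}(M_{n+1})$, which vanishes by construction, and erases the higher degrees. Hence $\mathbf{Cot}_n(N(ker(\eta_X)))=0$, and by (\ref{cotnorm}) also $N(Cot_n(ker(\eta_X)))=0$. To conclude $Cot_n(ker(\eta_X))=0$ itself, I would invoke the iterated semi-direct product decomposition of \ref{ordersn}: a simplicial group whose Moore complex is trivial in every degree must satisfy $Y_n=0$ for all $n$, so $N$ reflects the zero object on $Simp(Grp)$. This gives the normality condition, and Theorem \ref{tfnormal} yields $\mu_{n \geq}=(Ker(Cot_n), \mathcal{M}_{n \geq})$. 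The main obstacle I anticipate is the degree-$n$ bookkeeping: verifying precisely that $N(ker(\eta_X))_n = \delta_{n+1}(M_{n+1})$ from the exactness of $N$ together with the explicit Moore-complex description of $Cot_n$ in \ref{porter}; once that identification is in place the rest is a formal consequence of Theorem \ref{tfnormal}.
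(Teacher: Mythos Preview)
Your proposal is correct and follows essentially the same route as the paper: both invoke Theorem \ref{tfnormal} and verify normality of $Cot_n$ by computing $N(ker(\eta_X))$ via exactness of $N$ and the commutation (\ref{cotnorm}), then observing that $\mathbf{Cot}_n$ kills this complex. Your version is in fact more careful than the paper's, since you explicitly justify the final step (that $N$ reflects the zero object via the semi-direct decomposition of \ref{ordersn}) rather than leaving it implicit.
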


\begin{proof} By Theorem \ref{tfnormal} it suffices to prove that the functor $Cot_n$ is normal. Let $\eta$ be the unit as in \ref{porter2}, for a simplicial group $X$ with a Moore complex $M$. Since taking normalization preserves short exact sequences we have that the Moore complex of $ker(\eta_X)$ is:
\[\begin{tikzcd} \dots\ar[r] & M_{n+2}\ar[r, "d_{n+2}"] & M_{n+1}\ar[r, "e_{n+1}", two heads]& d_{n+1}(M_{n+1})\ar[r] & 0\ar[r]  & \dots \end{tikzcd}\]
which is trivial under the chain cotruncation $\mathbf{cot}_n$. Since the functor $\mathbf{Cot}_n$ and $Cot_n$ commute with the Moore normalization as in \ref{norcosk} we have that $Cot_n(ker(\eta_X))=0$ for any simplicial group $X$.
\end{proof}

\begin{defi} For each $n$ we will denote $\mu_{n \geq}$ the torsion theory in $Simp(Grp)$ given by the functor $Cot_n$, i.e.:
\[\mu_{n \geq}=(Ker(Cot_n), \mathcal{M}_{n \geq})\]
 and also the associated idempotent radical will be denoted by $\mu_{n \geq}: Simp(\X) \to Simp(\X)$.
\end{defi}

The category $\mathcal{M}_{0 \geq}$ is equivalent to the category $Dis(Grp)$ of discrete simplicial groups, simplicial groups where all degenerecies and face morphisms are the identity.  And it is well-know from Loday's article \cite{Lod82} that the category $\mathcal{M}_{1 \geq}$ is equivalent to the category of internal grupoids $Grpd(Grp)$.

\begin{coro}\label{disgrpdtf} The categories $Dis(Grp)$ of discrete simplicial groups and the category $Grpd(Grp)$ of internal grupoids are torsion-free subcategories of $Simp(\X)$.
\end{coro}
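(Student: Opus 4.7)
The plan is to specialize the preceding corollary---which established that $\mu_{n \geq} = (Ker(Cot_n), \mathcal{M}_{n\geq})$ is a torsion theory in $Simp(Grp)$ for every $n$---to the cases $n=0$ and $n=1$. This immediately identifies $\mathcal{M}_{0 \geq}$ and $\mathcal{M}_{1 \geq}$ as torsion-free subcategories of $Simp(Grp)$, so the only remaining task is to transport this conclusion across the equivalences $\mathcal{M}_{0 \geq} \simeq Dis(Grp)$ and $\mathcal{M}_{1 \geq} \simeq Grpd(Grp)$ that were recalled just before the statement.

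For $n=0$, I would spell out the equivalence $\mathcal{M}_{0 \geq} \simeq Dis(Grp)$ by appealing to the semi-direct product decomposition in \ref{ordersn}: if a simplicial group $X$ has Moore complex concentrated in degree zero, then the formula $X_n = (\cdots(M_n \rtimes M_{n-1})\cdots)\rtimes M_0$ collapses to $X_n \cong M_0$, the relevant degeneracies become identities, and the simplicial identities force each face map to be the identity as well, so $X$ is discrete; the converse is immediate. For $n=1$ the equivalence $\mathcal{M}_{1 \geq} \simeq Grpd(Grp)$ is Loday's classical theorem \cite{Lod82}, which I would invoke as a black box.

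Because torsion-free subcategories are replete by definition, transporting $\mu_{0 \geq}$ and $\mu_{1 \geq}$ along these equivalences yields torsion theories $(Ker(Cot_0), Dis(Grp))$ and $(Ker(Cot_1), Grpd(Grp))$ in $Simp(Grp)$, which is exactly the statement. No real obstacle arises: all the substantive work has already been carried out in the preceding corollary and in the results quoted from \cite{Con84}, \cite{Por93} and \cite{Lod82}; the only point worth emphasizing is the repleteness remark, which ensures that torsion-freeness survives the equivalences and hence that the specific presentations $Dis(Grp)$ and $Grpd(Grp)$---rather than the a priori different subcategories $\mathcal{M}_{0\geq}$ and $\mathcal{M}_{1\geq}$---can be taken as the torsion-free parts.
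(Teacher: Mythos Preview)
Your proposal is correct and matches the paper's approach exactly: the paper states this corollary without proof, treating it as immediate from the preceding corollary (that $\mu_{n\geq}=(Ker(Cot_n),\mathcal{M}_{n\geq})$ is a torsion theory) together with the equivalences $\mathcal{M}_{0\geq}\simeq Dis(Grp)$ and $\mathcal{M}_{1\geq}\simeq Grpd(Grp)$ recalled in the paragraph just before it. Your write-up simply spells out these implicit steps, with the added (and appropriate) care of noting repleteness to justify transporting torsion-freeness across the equivalences.
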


\begin{teo} Let be $X$ a simplicial group with Moore complex $M$. The normalization functor $N$ maps the short exact sequence of $X$ given by the torsion theory $\mu_{n \geq}$ into the short exact sequence of $M$ given by $\mathcal{COK}_{n+1}$.

Moreover, $N$ maps the torsion category $Ker(Cot_n)$ into the torsion category $\mathcal{EP}_{n+1}$ and, respectively, the torsion-free category $\mathcal{M}_{n \geq}$ into $pch(Grp)_{n \geq}$;
\[ \begin{tikzcd}[row sep=large]
Ker(Cot_n)\ar[r, bend left]\ar[r, phantom, "\perp"]\ar[d, "N"] & Simp(\X)\ar[r, bend left] \ar[l, bend left]\ar[r, phantom, "\perp"]\ar[d, "N"]& \mathcal{M}_{n \geq}\ar[l, bend left]\ar[d, "N"]
\\ \mathcal{EP}_{n+1} \ar[r, bend left] \ar[r, phantom, "\perp"]& \pch \ar[r, bend left]\ar[l, bend left]\ar[r, phantom, "\perp"]& pch(Grp)_{n \geq}\ar[l, bend left]
\end{tikzcd} \]
\end{teo}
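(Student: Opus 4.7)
The plan is to start from the short exact sequence associated to the torsion theory $\mu_{n \geq}$, namely
\[\begin{tikzcd} 0\ar[r] & ker(\eta_X)\ar[r] & X\ar[r, "\eta_X"] & Cot_n(X)\ar[r] & 0, \end{tikzcd}\]
and push it through the Moore normalization $N$. Since $\X = Grps$ is semi-abelian and $N: Simp(\X) \to pch(Grps)$ preserves short exact sequences (as recalled after the introduction of $N$, following \cite{EveLin04}), the resulting sequence
\[\begin{tikzcd} 0\ar[r] & N(ker(\eta_X))\ar[r] & N(X)\ar[r, "N(\eta_X)"] & N(Cot_n(X))\ar[r] & 0 \end{tikzcd}\]
is exact in $pch(Grps)$.

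The heart of the argument is to identify the right-hand term with $\mathbf{Cot}_n(M)$. This is exactly Porter's commutative diagram \eqref{cotnorm} from paragraph \ref{porter}: $N \circ Cot_n \cong \mathbf{Cot}_n \circ N$. Consequently $N(Cot_n(X)) \cong \mathbf{Cot}_n(M)$, which by Theorem \ref{ttcok} (and the explicit form of sequence \eqref{sescok} applied at index $n+1$) is precisely the torsion-free quotient $M/cok_{n+1}(M)$ of $M$ under the torsion theory $\mathcal{COK}_{n+1}$. By uniqueness (up to isomorphism) of the short exact sequence associated with a torsion theory, the kernel $N(ker(\eta_X))$ must then be isomorphic to the torsion subobject $cok_{n+1}(M)$ of $M$. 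This establishes that $N$ maps the $\mu_{n \geq}$ short exact sequence of $X$ onto the $\mathcal{COK}_{n+1}$ short exact sequence of $M$.

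For the second assertion, I need to verify the two inclusions $N(Ker(Cot_n)) \subseteq \mathcal{EP}_{n+1}$ and $N(\mathcal{M}_{n \geq}) \subseteq pch(Grps)_{n \geq}$. The latter is immediate from the definition: $X \in \mathcal{M}_{n \geq}$ means $M_i = 0$ for $i > n$, so $N(X) = M$ is already a proper chain complex concentrated in degrees $\leq n$. For the former, if $Cot_n(X) = 0$ then by Porter's identification $\mathbf{Cot}_n(M) = N(Cot_n(X)) = 0$, so $M/cok_{n+1}(M) = 0$, i.e. $M \cong cok_{n+1}(M)$. Unpacking the explicit form of $cok_{n+1}(M)$ from sequence \eqref{sescok}, this forces $M_i = 0$ for $i < n$ and $\delta_{n+1}(M_{n+1}) \cong M_n$; since Moore complexes are proper, Lemma \ref{lpch} shows $\delta_{n+1}$ is a normal epimorphism, which places $M$ in $\mathcal{EP}_{n+1}$.

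The only point that requires real care, rather than bookkeeping, is the indexing: Porter's $Cot_n$ truncates by killing $M_k$ for $k > n$ and replacing $M_n$ by $M_n/\delta_{n+1}(M_{n+1})$, so it corresponds on Moore complexes to $\mathbf{Cot}_n$ (in the paper's convention), whereas the matching chain torsion theory is $\mathcal{COK}_{n+1}$, whose reflector is $\mathbf{cot}_n$. I expect the main friction to be aligning these shifts against the definitions of $\mathcal{EP}_{n+1}$ (where $\delta_{n+1}$ is a normal epi and $M_i = 0$ for $n > i$) and $pch(Grps)_{n \geq}$, but once the indices are pinned down all the identifications follow mechanically from Porter's square \eqref{cotnorm} and exactness of $N$.
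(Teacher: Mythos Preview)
Your proposal is correct and follows essentially the same approach as the paper: both use exactness of $N$ together with Porter's commuting square \eqref{cotnorm} to carry the $\mu_{n\geq}$ short exact sequence to the $\mathcal{COK}_{n+1}$ one, and then read off the inclusions of torsion and torsion-free parts. The only cosmetic difference is that you identify $N(ker(\eta_X))$ via uniqueness of the torsion short exact sequence, whereas the paper writes out the three Moore complexes explicitly; the substance is the same.
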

\begin{proof}
Since the cotruncation functors commute up to isomorphism with normalization as in diagram (\ref{cotnorm}) and $N$ preserves short exact sequences,  the short exact sequence in $Simp(Grp)$: 
\[\begin{tikzcd} 0\ar[r]& ker(\eta_X)\ar[r] &  X\ar[r, "\eta_X"] & Cot_n(X)\ar[r]&0\end{tikzcd} \] 
is mapped under $N$ into the short exact sequence (written vertically) in $pch(Grp)$:
\[\begin{tikzcd}[column sep=small]
N(ker(\eta_x))=\ar[d]&\dots\ar[r] & M_{n+1}\ar[d]\ar[r, "e_{n+1}" ] & \delta_{n+1}(M_{n+1})\ar[d, "m_{n+1}"]\ar[r]  & 0\ar[d] \ar[r] &  \dots \\ 
M=\ar[d]&\dots\ar[r] & M_{n+1}\ar[d]\ar[r, "\delta_{n+1}"] & M_n \ar[d]\ar[r, "\delta_n"]& M_{n-1}\ar[d]  \ar[r]& \dots\\
N(Cot_n(X))=&\dots\ar[r]&0\ar[r]  & M_n/\delta_{n+1}(M_{n+1})\ar[r, "\delta_n'"] & M_{n-1}  \ar[r]& \dots\, .
\end{tikzcd}\]

Since the associated short exact sequence of the torsion theory is preserved it follows that $N(Ker(Cot_n)) \subset \mathcal{EP}_{n+1} $ and $N(\mathcal{M}_{n \geq}) \subset pch(Grp)_{n \geq}$.
\end{proof}

\begin{teo} The torsion subcategories of the torsion theories $\mu_{n \geq}$ and $\mu_{\geq n+1}$ in $Simp(Grp)$ are linearly ordered as:
\[0 \subseteq \dots  \subseteq  Ker(Cot_{n+1}) \subseteq  \mathcal{M}_{\geq n+1} \subseteq  Ker(Cot_n) \subseteq  \mathcal{M}_{\geq n} \subseteq   \dots  \subseteq Simp(Grp).\]

 Moreover, the torsion theories $\mu_{n \geq}$ and $\mu_{\geq n+1}$ form a linearly ordered lattice $\mu(Grp)$: 
\begin{multline*}
0  \leq \dots \leq \; \mu_{ n+1\geq}  \;  \leq \;   \mu_{\geq n+1} \;  \leq \; \mu_{n\geq} \;   \leq \;  \mu_{\geq n} \;  \leq \dots \\  \dots \leq \; \mu_{\geq 2} \; \leq  \; \mu_{1\geq} \; \leq \;  \mu_{\geq 1} \;  \leq  \; \mu_{0\geq} \;   \leq Simp(Grp)\, . 
\end{multline*}
\end{teo}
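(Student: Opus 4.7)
The plan is to reduce the chain of inclusions of torsion subcategories to the analogous chain in $pch(Grp)$ established in Proposition \ref{latinf}, and then conclude the ordering of torsion theories from the definition of the partial order on torsion theories.

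First I would translate membership in each torsion subcategory into a condition on the Moore complex. By the semi-direct product decomposition of \ref{ordersn}, a simplicial group $X$ satisfies $X_i = 0$ for $i \leq n$ if and only if $M_i(X) = 0$ for $i \leq n$; hence $X \in \mathcal{M}_{\geq n+1}$ if and only if $N(X) \in pch(Grp)_{\geq n+1}$. In particular, $N$ reflects the zero object on $Simp(Grp)$: if $N(Y) \cong 0$, then each $Y_i$ is an iterated semi-direct product of trivial groups and hence vanishes. For the other torsion subcategory I would use the commutation $N \circ Cot_n \cong \mathbf{Cot}_n \circ N$ from \ref{porter}, which together with the previous reflection property gives that $X \in Ker(Cot_n)$ if and only if $\mathbf{Cot}_n(N(X)) \cong 0$; by the explicit form of $\mathbf{cot}_n$, this reduces to $M_i(X) = 0$ for $i < n$ together with $\delta_{n+1}$ being a normal epimorphism, i.e., $N(X) \in \mathcal{EP}_{n+1}$.

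With these translations in hand, the chain in Proposition \ref{latinf},
\[\dots \leq pch(Grp)_{\geq n+1} \leq \mathcal{EP}_{n+1} \leq pch(Grp)_{\geq n} \leq \mathcal{EP}_n \leq \dots,\]
lifts directly to the chain of torsion subcategories
\[\dots \subseteq \mathcal{M}_{\geq n+1} \subseteq Ker(Cot_n) \subseteq \mathcal{M}_{\geq n} \subseteq Ker(Cot_{n-1}) \subseteq \dots\]
in $Simp(Grp)$. Each individual inclusion can equivalently be checked by inspection: $\mathcal{M}_{\geq n+1} \subseteq Ker(Cot_n)$ holds because $M_n = 0$ makes $\delta_{n+1}$ the zero morphism to $0$, which is vacuously a normal epimorphism; and $Ker(Cot_n) \subseteq \mathcal{M}_{\geq n}$ holds because $\mathbf{Cot}_n(N(X)) \cong 0$ forces $M_i(X) = 0$ for $i < n$, which is precisely the defining condition of $\mathcal{M}_{\geq n}$.

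Finally, since the partial order on torsion theories is defined by containment of the torsion subcategories, the chain above immediately yields the linearly ordered lattice $\mu(Grp)$ claimed in the statement. The main obstacle is the second equivalence of the first step, which rests on the commutation between $Cot_n$ and the chain-level cotruncation $\mathbf{Cot}_n$ from \ref{porter} together with the reflection of the zero object by $N$ arising from the Conduch\'e decomposition of \ref{ordersn}; the rest of the argument is then a formal transfer through $N$ of the already established ordering in $pch(Grp)$.
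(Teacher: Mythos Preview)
Your proof is correct and follows essentially the same strategy as the paper: both arguments use the Conduch\'e semi-direct decomposition to pass between vanishing of $X_i$ and vanishing of $M_i$, and both identify $Ker(Cot_n)$ via the commutation of $N$ with the cotruncation functors. Your framing is slightly more systematic in that you explicitly translate both torsion subcategories into conditions on the Moore complex and then invoke Proposition \ref{latinf} to lift the chain from $pch(Grp)$, whereas the paper checks the two inclusions $\mathcal{M}_{\geq n+1} \subseteq Ker(Cot_n)$ and $Ker(Cot_n) \subseteq \mathcal{M}_{\geq n}$ directly (the first via the unit $\eta_X$ being a normal epimorphism, the second via $Cot_n(X)_i = X_i$ for $i \leq n-1$); but the underlying content is the same.
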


\begin{proof}
First we will prove $\mathcal{M}_{\geq n+1} \subseteq Ker(Cot_n)$. For a simplicial group $X$ and $M$ its Moore normalization and $\eta$ the unit as in Proposition \ref{porter2}. Then, if $M_i=0$ for $n \geq i$ then $X_i=0$ for $n \geq i$ and since $\eta_X$ is a normal epimorphism then $Cot_n(X)_i=0$ for $n \geq i$. It follows from the semi-direct decomposition that $Cot_n(X)=0$.

Now we prove $Ker(Cot_n) \subseteq \mathcal{M}_{\geq n}$. From  it is clear that if $Cot_n(X)=0$ we have $X_i=0$ for $ n-1 \geq i$ then $M_i=0$ for $ n-1 \geq i$ and $X$ is in $\mathcal{M}_{\geq n}$.
\end{proof}

\begin{defi} We will write $\mu(Grp)$ for the linearly order lattice of torsion theories in $Simp(Grp)$ given by $\mu_{n \geq}$ and $\mu_{\geq n}$. 
\end{defi}

\begin{teo} The torsion theory $\mu_{n \geq }$ is hereditary and $\mu_{\geq n}$ is cohereditary. Moreover, the subcategories $\mathcal{M}_{n \geq}$ and $\mathcal{M}_{\geq n}$ are semi-abelian.
\end{teo}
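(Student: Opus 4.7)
The plan is to derive the two claimed closure properties by transferring them through the Moore normalization functor $N$ and the semi-direct decomposition of \ref{ordersn}, and then to deduce the semi-abelianness of the subcategories via Theorem \ref{hersemiab} and Corollary \ref{cohersemi}.

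Two of the four pieces are essentially already on record or routine. First, $\mu_{\geq n}$ is hereditary because it arises from the localization $tr_{n-1}\dashv cosk_{n-1}$ (corollary following Theorem \ref{loctt}); by Theorem \ref{hersemiab} this gives $\mathcal{M}_{\geq n}\cong \T_{tr_{n-1}}$ semi-abelian. Second, to apply Corollary \ref{cohersemi} to $\mathcal{M}_{n\geq}$ I also need that $\mu_{n\geq}$ is cohereditary; this is immediate because $N$ preserves short exact sequences, so a normal epimorphism $X\twoheadrightarrow Y$ with $X\in\mathcal{M}_{n\geq}$ forces $N(Y)_i=0$ for $i>n$ and hence $Y\in\mathcal{M}_{n\geq}$.

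The two harder pieces --- $\mu_{n\geq}$ hereditary and $\mu_{\geq n}$ cohereditary --- both amount to preserving a boundary-degree condition on the Moore complex (normal epi at degree $n+1$, respectively normal mono at degree $n-1$) which is not preserved in $\pch$ in general but is rescued here by the semi-direct decomposition of \ref{ordersn}. For the hereditary side: given $Y\hookrightarrow X$ with $X\in Ker(Cot_n)$, the conditions $N(X)_i=0$ for $i<n$ and $\delta_{n+1}^X$ a normal epi translate to $X_i=0$ for $i<n$ and $X_{n+1}\cong M_{n+1}^X\rtimes_{s_n} M_n^X$; closure of $Y$ under $s_n$ yields the inherited decomposition $Y_{n+1}\cong M_{n+1}^Y\rtimes_{s_n} Y_n$, and given $y\in Y_n$ I will use a lift $\tilde m\in M_{n+1}^X$ of $y$ supplied by $\delta_{n+1}^X$ together with $s_n(y)\in Y_{n+1}$ to extract the required lift in $M_{n+1}^Y$. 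For the cohereditary side of $\mu_{\geq n}$: for a normal epi $p\colon Y\twoheadrightarrow Z$ with $N(Y)\in \mathcal{MN}_{n-1}$, triviality of $N(Z)$ in degrees $\geq n$ transfers as above, and Proposition \ref{iddet} applied to the naturality square reduces the normal-mono condition on $\delta_{n-1}^Z$ to plain monicity, which I obtain from the inherited semi-direct decomposition of $Z$ together with the monicity of $\delta_{n-1}^Y$.

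The main obstacle, in both hard cases, is precisely this transfer at the boundary degree: extracting (or detecting vanishing of) the Moore-component of an element requires a genuinely non-abelian semi-direct-product calculation, and it is here that the simplicial group structure --- beyond mere chain-complex data --- is indispensable. Once both closure properties are in place, the semi-abelianness of $\mathcal{M}_{n\geq}$ and $\mathcal{M}_{\geq n}$ follows by direct application of Corollary \ref{cohersemi} and Theorem \ref{hersemiab}, respectively.
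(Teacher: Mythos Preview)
Your ``easy'' observations are exactly right, and in fact they are the whole story. The paper's one-line proof (``it follows from the fact that $N$ is an exact functor'') is establishing precisely that $\mu_{n\geq}$ is \emph{cohereditary} (since $N$ preserves regular epimorphisms, a quotient of an $X$ with $N(X)_i=0$ for $i>n$ again has this property) together with the already-recorded fact that $\mu_{\geq n}$ is \emph{hereditary} (it was defined via the localization $tr_{n-1}\dashv cosk_{n-1}$). From these, $\mathcal{M}_{\geq n}$ is semi-abelian as the torsion part of a hereditary torsion theory (Theorem~\ref{hersemiab}) and $\mathcal{M}_{n\geq}$ is semi-abelian as the torsion-free part of a cohereditary one (Corollary~\ref{cohersemi}). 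In other words, the words ``hereditary'' and ``cohereditary'' in the first sentence of the theorem are transposed; both the paper's own proof and the internal logic of the ``Moreover'' clause (which does not follow from the sentence as printed) confirm this.

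The two ``hard'' claims you attempt to prove are in fact \emph{false}, so no semi-direct product manipulation can rescue them. For $\mu_{0\geq}$ hereditary: the diagonal embedding $Dis(\mathbb{Z}/2)\hookrightarrow Ind(\mathbb{Z}/2)=Cosk_0(\mathbb{Z}/2)$ is a monomorphism of simplicial groups; the target lies in $Ker(Cot_0)$ (its Moore complex is $\mathbb{Z}/2\xrightarrow{\mathrm{id}}\mathbb{Z}/2$, so $\pi_0=0$), but the source does not ($\pi_0(Dis(\mathbb{Z}/2))=\mathbb{Z}/2$). This pinpoints the gap in your lifting argument: the element $\tilde m\in M_{n+1}^X$ need not lie in $Y_{n+1}$, and combining it with $s_n(y)$ cannot land you in $M_{n+1}^Y$ because $d_n(s_n(y))=y\neq e$, so any product involving $s_n(y)$ fails the Moore condition at the face $d_n$. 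Dually, $\mu_{\geq 1}$ is not cohereditary: quotienting $Ind(\mathbb{Z}/2)\in\F_{tr_0}$ by the normal subobject $Dis(\mathbb{Z}/2)$ produces (using exactness of $N$) a simplicial group with Moore complex $\mathbb{Z}/2$ concentrated in degree~$1$, hence $\delta_1\colon\mathbb{Z}/2\to 0$ is not monic and the quotient is not in $\mathcal{MN}_1$. So you should drop the ``hard'' parts entirely; what you labelled routine is already the complete proof of the intended statement, and matches the paper's argument.
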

\begin{proof}
It follows from the fact that $N$ is an exact functor. Then, $\mathcal{M}_{n \geq}$ and $\mathcal{M}_{\geq n}$ are semi-abelian by  theorem \ref{hersemiab} and corollary \ref{cohersemi}, respectively. 
\end{proof}

\begin{teo}
\begin{enumerate}
\item For $n \geq 1$, a simplicial group $X$ is torsion for $\mu_{n \geq}$ (i.e. it belongs to $Ker(Cot_n)$) if and only if and $tr_{n-1}(X)=0$ and $\eta_X$ is a normal epimorphism, where $\eta$ is the unit of $tr_n \dashv cosk_n$.	

\item For $n=0$, $X$ belongs to $Ker(Cot_0)$ if and only if $\eta_X$ is a normal epimorphism with $\eta$ the unit of $tr_0\dashv cosk_0$.
\end{enumerate} 
\end{teo}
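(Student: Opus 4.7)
The plan is to transport both sides of the equivalence to the Moore complex $M = N(X)$ via Porter's commutation identity (\ref{cotnorm}), Theorem \ref{norcosk}, and the semi-direct decomposition of \ref{ordersn}, reducing the comparison to an elementary one at the chain level.

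First I would unwind the torsion condition $X \in Ker(Cot_n)$. From $N \circ Cot_n \cong \mathbf{Cot}_n \circ N$ together with the explicit form $\mathbf{Cot}_n = \mathbf{sk}_n \mathbf{cot}_n$, the chain complex $N(Cot_n(X))$ has $M_i$ in degrees $i<n$, $M_n/\delta_{n+1}(M_{n+1})$ in degree $n$, and $0$ above. Since a simplicial group is trivial iff its Moore complex is trivial (by \ref{ordersn}), the condition $Cot_n(X) = 0$ is equivalent to the conjunction of (i) $M_i = 0$ for $i<n$ and (ii) $\delta_{n+1}: M_{n+1} \to M_n$ is a normal epimorphism. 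The same decomposition identifies $tr_{n-1}(X)=0$ (i.e., $X_i = 0$ for $i \leq n-1$) with (i), so the task reduces to showing that, under (i), $\eta_X$ being a normal epimorphism is equivalent to (ii).

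Next I would describe $\eta_X: X \to Cosk_n(X)$ at the Moore level. By Theorem \ref{norcosk}, the Moore complex of $Cosk_n(X)$ is $M_i$ for $i \leq n$, $ker(\delta_n)$ at degree $n+1$, and $0$ for $i > n+1$. Therefore $N(\eta_X)$ is the identity in degrees $\leq n$, the canonical factorization $\delta'_{n+1}: M_{n+1} \to ker(\delta_n)$ of $\delta_{n+1}$ through the kernel at degree $n+1$ (such a factorization exists since $\delta_n \delta_{n+1} = 0$), and the zero map in higher degrees. Since $N$ preserves regular epimorphisms (\cite{EveLin04}) and, conversely, each $(\eta_X)_i$ decomposes along \ref{ordersn} as a combination of its Moore components $N(\eta_X)_j$ for $j \leq i$, I conclude that $\eta_X$ is a normal epimorphism of simplicial groups iff $\delta'_{n+1}$ is surjective.

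Finally, invoking (i) ($M_{n-1} = 0$), $\delta_n$ vanishes so $ker(\delta_n) = M_n$ and $\delta'_{n+1} = \delta_{n+1}$; hence under $tr_{n-1}(X) = 0$ the condition that $\eta_X$ be a normal epimorphism becomes precisely (ii). The case $n=0$ goes through unchanged with the conventions $M_{-1} = 0$ (so that $\delta_0 = 0$ and $ker(\delta_0) = M_0$) and $tr_{-1}(X) = 0$ declared vacuous. The main subtlety is the reflection claim used in the previous paragraph — that surjectivity of each Moore-complex component of $\eta_X$ forces $\eta_X$ to be level-wise surjective — and this is exactly where the semi-direct decomposition of \ref{ordersn} is essential, since it lets one read off surjectivity of $(\eta_X)_i$ from that of the $N(\eta_X)_j$ with $j \leq i$.
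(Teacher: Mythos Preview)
Your proposal is correct and follows essentially the same route as the paper: both arguments translate the two sides of the equivalence to the Moore complex via (\ref{cotnorm}) and Theorem~\ref{norcosk}, reducing everything to whether $\delta_{n+1}$ is a normal epimorphism once $M_i=0$ for $i<n$. The only noteworthy difference is in the step showing that $\eta_X$ is a normal epimorphism iff $N(\eta_X)$ is: the paper invokes conservativity of $N$ (together with its exactness) in one line, whereas you argue it by hand through the semi-direct decomposition of~\ref{ordersn}; both work, the paper's version is just a little quicker.
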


\begin{proof} 1) Recall that $\mu_{n \geq} \leq \mu_{\geq n}$ we have the inclusion of torsion subcategories $Ker(Cot_n) \subseteq Ker(tr_{n-1})$. Then, a simplicial group $X$ belongs to $Ker(tr_{n-1})$ if and only if its Moore complex $M$ is trivial for degrees $n-1 \geq i$, and moreover $X$ belongs to $Ker(Cot_n)$ if and only if, in addition, $\delta_{n+1}: M_{n+1} \to M_n$ is a normal epimorphism.

Let $X$ belong to $Ker(tr_{n-1})$ and $\eta_X:X \to Cosk_n$ where $\eta$ is the unit of the adjunction $tr_n \dashv cosk_n$. The normalization of $\eta_X$ is
\[\begin{tikzcd} M(X)=\dots\ar[r]\ar[d, "M(\eta_X)"] & M_{n+2}\ar[r]\ar[d] & M_{n+1}\ar[r, "\delta_{n+1}"]\ar[d, "\delta_{n+1}"] & M_n\ar[r]\ar[d, "1"] & 0\ar[r]\ar[d]& \dots  \\ M(Cosk_n)=\dots\ar[r] & 0\ar[r] &M_n\ar[r, "1"] & M_n\ar[r] & 0\ar[r]& \dots      \end{tikzcd} \] 

Since the normalization functor is conservative we have that $\delta_{n+1}$ is a normal epimorphism if and only if $\eta_X$ is a normal epimorphism. 2) It is similar to 1).
\end{proof}

Notice, as expected from the torsion theory $(Conn(Grpd), Dis(Grp))$ in $Grpd(Grp)$, that the torsion category $Ker(Cot_0)$ in $Simp(Grp)$ contains the subcategory of connected internal groupoids $Conn(Grpd)$, i.e. internal groupoids $X$ with the condition that $(d_0,d_1): X_1 \to X_0^2$ is a normal epimorphism. 

\section{Homotopy groups and torsion subobjects}

\begin{defi}\label{funsimpgrp}  For $m\geq n$ and the idempotent radicals of the torsion theories of $\mu(Grp)$:
\[  \dots\leq \quad \mu_{m \geq} \quad \leq \quad \mu_{ \geq m} \quad \leq \dots \leq \quad \mu_{n \geq} \quad \leq \quad \mu_{\geq n} \quad \leq \dots ,\]
we consider the quotients of preradicals of $Simp(Grp)$:
\[
\Pi^{\geq n}_{m \geq}:=  \frac{\mu_{\geq n}}{\mu_{m \geq}}\, , \quad \Pi^{n\geq }_{ \geq m}:=  \frac{\mu_{n\geq }}{\mu_{ \geq m}} \, ,\quad \Pi^{\geq n}_{ \geq m}:=  \frac{\mu_{\geq n}}{\mu_{ \geq m}}\, , \quad \Pi^{n\geq }_{ m \geq }:=  \frac{\mu_{n\geq }}{\mu_{m \geq }}\, ; 
\]
as well as, for all $n$ the trivial quotients:
\[\Pi^{\geq n}:= \frac{\mu_{\geq n}}{0}\cong \mu_{\geq n}\, , \quad \Pi_{\geq n}:=\frac{Id}{\mu_{\geq n}}\, , \quad \Pi^{n \geq }:= \frac{\mu_{n \geq }}{0}\cong \mu_{n \geq }\, , \quad \Pi_{n \geq }:=\frac{Id}{\mu_{n \geq }} \, . \]
For a simplicial group $X$ the objects 
\[\Pi^{\geq n}_{m \geq}(X), \quad \Pi^{n\geq }_{ \geq m}(X), \quad \Pi^{\geq n}_{ \geq m}(X), \quad \Pi^{n\geq }_{ m \geq }(X), \quad \Pi^{\geq n}(X), \quad \Pi_{\geq n}(X), \quad \Pi^{n \geq }(X), \quad \Pi_{n \geq }(X)\] will be called the \textit{fundamental simplicial groups} of $X$. Accordingly, the family of functors:
 \[\Pi^{\geq n}_{m \geq}, \Pi^{n\geq }_{ \geq m}, \Pi^{\geq n}_{ \geq m}, \Pi^{n\geq }_{ m \geq }, \Pi^{\geq n}, \Pi_{\geq n}, \Pi^{n \geq }, \Pi_{n \geq }:\begin{tikzcd} Simp(Grp)\ar[r] & Simp(Grp) \end{tikzcd} \]
will be called \textit{fundamental simplicial functors}.
\end{defi}

Following Proposition \ref{porter2}, the homotopy groups of $\Pi_{n \geq}(X)=Id/\mu_{n \geq}(X)=Cot_n(X)$ are the same as $X$ for $ n \geq i$ and trivial elsewhere. The homotopy groups of the fundamental simplicial groups are the same as $X$ or trivial at some degrees. The following result generalizes 3) and 4) of \ref{porter2}.

\begin{teo}\label{funsimgrp}  Let be $X$ a simplicial group with Moore complex $M$. The homotopy groups of the fundamental simplicial group of $X$ are calculated as follows:
\begin{enumerate}
\item For all $n\geq 0$
\[\pi_i(\Pi^{n \geq}(X))=\pi_i(\Pi^{\geq n+1})= \left\{\begin{array}{cc}
  \pi_i(M) & i \geq n+1 \\
 0     & n+1 >i \, .
 \end{array}\right. \]
\item For all $n \geq 0$
\[\pi_i (\Pi_{n \geq}(X))= \pi_i(\Pi^{\geq n+1}(X))= \left\{\begin{array}{cc}
  0 & i \geq n+1 \\
 \pi_i(X)     & n+1 >i \, .
 \end{array}\right.\]
\item For all $n \geq 0$ 
\[ \pi_i(\Pi^{n \geq}_{\geq n+1}(X)) = 0 \ \mbox{for all} \ i \, .\]
\item For $m>n\geq 0$
\[ \pi_i (\Pi^{n \geq}_{\geq m+1}(X)) = \left\{\begin{array}{cc}
  \pi_i(X) &  m+1 > i \geq n+1 \\
 0  & \mbox{otherwise} \, .
 \end{array}\right.\]
\item Moreover, for $m>n \geq 0$ and for all $i$ 
\[\pi_i (\Pi^{n \geq}_{\geq m+1}(X)) = \pi_i(\Pi^{n \geq }_{m \geq}(X)) =\pi_i(\Pi^{\geq n+1}_{ \geq m+1}(X)) = \pi_i(\Pi^{\geq n+1}_{m \geq}(X))\, .\]
\item In particular, for $m=n+1$
\[ \pi_i (\Pi^{n \geq}_{\geq n+2} (X))= \left\{\begin{array}{cc}
  \pi_i(X) & i=n+1 \\
 0  & i\neq n \, .
 \end{array}\right.\]
\end{enumerate}  
\end{teo}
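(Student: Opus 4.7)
The plan is to reduce every case to Lemma \ref{homology} by applying the Moore normalization $N\colon Simp(Grp)\to pch(Grp)$, exploiting two facts: $\pi_i(X)=H_i(N(X))$ for every simplicial group $X$, and $N$ preserves short exact sequences (and hence quotients of normal subobjects) because $Grp$ is semi-abelian.

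First I would use the two final theorems of Sections 5 and 6 to identify the normalization of the relevant torsion subobjects: for a simplicial group $X$ with Moore complex $M$ one has $N(\mu_{n \geq}(X))\cong cok_{n+1}(M)$ and $N(\mu_{\geq n+1}(X))\cong ker_{n+1}(M)$. Since $N$ is exact it takes the quotient of any comparable pair of preradicals $\tau\leq\sigma$ at $X$ to the quotient of the corresponding chain preradicals at $M$, with all indices shifted up by one. Applying this to each of the eight fundamental simplicial functors of Definition \ref{funsimpgrp}, where the trivial radicals $0$ and $Id$ correspond to $0$ and $Id$ on the chain side, I identify $N(\Pi(X))$ with the analogous quotient of $cok_{k+1}$'s and $ker_{k+1}$'s applied to $M$.

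With those identifications in hand, every clause of the theorem follows by taking $H_i$ and invoking the corresponding numbered part of Lemma \ref{homology} at shifted indices: part (1) of the lemma at $n+1$ gives part (1) here; parts (2) and (3) translate in the same way; parts (4)--(6) use the full statement of parts (4)--(6) of Lemma \ref{homology} with $n,m$ replaced by $n+1,m+1$. The main obstacle is purely bookkeeping: the simplicial lattice alternates between $\mu_{n \geq}$ and $\mu_{\geq n+1}$ while the chain lattice alternates between $cok_{n+1}$ and $ker_{n+1}$, so one must verify carefully that $N$ respects the specific comparisons used in forming each quotient $\sigma/\tau$. This is ensured by the exactness of $N$ together with Lemma \ref{n3t} applied in both $Simp(Grp)$ and $pch(Grp)$.
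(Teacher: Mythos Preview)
Your proposal is correct and follows essentially the same route as the paper: the paper's proof simply observes that $N$ preserves short exact sequences and sends the preradicals $\mu_{n\geq}$, $\mu_{\geq n}$ to the chain preradicals $cok_{n+1}$, $ker_{n+1}$, so everything reduces to Lemma \ref{homology}. Your version is in fact more explicit about the index shift and about why $N$ commutes with the quotient $\sigma/\tau$ (via exactness and Lemma \ref{n3t}), which the paper leaves implicit.
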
  

\begin{proof} Since the Moore Normalization preserves short exact sequences and the preradicals $\mu_{n \geq}$ and $\mu_{\geq n }$ are mapped into the preradicals $cok_n$ and $ker_n$, this follows from the calculations of \ref{homology}.
\end{proof}

\begin{strc}\label{eilmac} For an abelian group $A$, a simplicial group $X$ is an Eilenberg-Mac Lane simplicial group of type $K(A,n)$ or a $K(A,n)$-simplicial group, if it has $\pi_n(X)=A$ and all other homotopy groups trivial. 

In particular, the $n$-th Eilenberg-Mac Lane simplicial group $K(A,n)$ for an abelian group $A$ (in symmetric form) is defined as follows. Consider the $(n+1)$-truncated simplicial group $k(A,n)$:
\[k(A,n)= \begin{tikzcd}  A^{n+1} \ar[r, shift left=3, "d_{n+1}"]\ar[r, shift right=3, "d_0"']\ar[r, phantom, "\vdots", shift left] & A \ar[r, shift left=3, "0"]\ar[r, shift right=3, "0"']\ar[r, phantom, "\vdots", shift left] & 0 \ar[r, shift left=3, "0"]\ar[r, shift right=3, "0"']& \dots \ar[r, shift left=3, "0"]\ar[r, shift right=3, "0"']& 0 \end{tikzcd}   \]
where the non-trivial face morphisms are 
\[(d_0, d_1, \dots, d_{n+1})=(p_0, p_0+p_1, p_1+p_2, \dots, p_{n-1}+p_n , p_n)\, , \]
where $p_i$ are the product projections and the degeneracies are given by $s_i=(0, \dots, 1_A, \dots, 0)$ with $1_A$ in the $i$th-place for $0 \leq i \leq n$. Then, we define 
\[K(A,n)=cosk_{n+1}(k(A,n)).\]
 It can be observed that for $m \geq n+1$, $K(A,n)_m=A^{\left( \frac{p}{n} \right)}$ where $\left( \frac{p}{n} \right)$ is the binomial coefficient.

Indeed, it is easy to see that the the Moore complex of $K(A,n)$ is:
\begin{equation}\label{K(A,n)}
M(K(A,n))= \begin{tikzcd}  \dots\ar[r] & 0\ar[r] & A\ar[r] & 0\ar[r] & \dots\ar[r] & 0\end{tikzcd}.
\end{equation}

This construction yields an embedding of the category $Ab$ of abelian groups into the category of simplicial groups $Simp(Grp)$ at any degree $n\geq 1$:
\[K(\_ ,n): \begin{tikzcd} Ab \ar[r]  & Simp(Grp)  \end{tikzcd}.\]
At $n=1$, it correspond with the usual definition of an abelian group as a simplicial group. For $n=0$ we will also consider the embedding of discrete simplicial groups $Dis: Grp \to Simp(Grp)$.
\end{strc}

\begin{strc}
The Dold-Kan Theorem  gives an equivalence between the categories of simplicial abelian groups $Simp(Ab)$ and chain complexes in abelian groups $chn(Ab)_{\geq 0}$, where the equivalence is given by the Moore normalization. In \cite{CaCe91} this equivalence was further extended to an equivalence between $Simp(Grp)$ and the category of hypercrossed modules in $Grp$. An hypercrossed module is a group chain complex $M$ with group actions for all $n$:
\[\Phi_\alpha^n: \begin{tikzcd} M_{r(\alpha)}\ar[r] & Aut(M_n) \end{tikzcd}\, \mbox{for}\, \alpha \in S(n)  \]
and binary operations
\[\Gamma^n_{\alpha, \sigma}: \begin{tikzcd} M_{r(\alpha)} \times M_{r(\sigma)}\ar[r] & M_n \end{tikzcd}\, \mbox{for}\, \alpha,\sigma \in S(n), \, 1< \sigma < \alpha, \, \alpha \cap \sigma= \emptyset \]
satisfying some equations. $S(n)$ has the order introduced in \ref{ordersn} and $\alpha \cap \sigma= \emptyset$ means that the maps  $\alpha, \sigma$ do not share a common index in their factorization by the degeneracies $\bar{i}$.
\end{strc}

\begin{lemma}\label{coroeilmac} Let $X$ be a simplicial group with Moore complex
\[M=\begin{tikzcd} \dots\ar[r] & 0\ar[r] & 0\ar[r] & A\ar[r] & 0\ar[r] & 0\ar[r] & \dots \end{tikzcd}\]
 then $X$ isomorphic to the Eilenbeg-Mac Lane simplicial group $K(A,n)$.
\end{lemma}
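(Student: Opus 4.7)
The plan is to apply the Carrasco-Cegarra equivalence between $Simp(Grp)$ and hypercrossed modules recalled just before the lemma. Under this equivalence, a simplicial group is determined by its Moore complex together with actions $\Phi^k_\alpha$ and brackets $\Gamma^k_{\alpha,\sigma}$ indexed by surjections in $S(k)$. I would show that when the Moore complex is concentrated in a single degree $n$ with value $A$, all of these actions and brackets are forced to be trivial, so the hypercrossed datum is determined entirely by the pair $(A,n)$. Since $K(A,n)$ has the same Moore complex by \eqref{K(A,n)}, the same argument gives it the same trivial hypercrossed structure, whence $X \cong K(A,n)$.

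First I would settle the preliminary point that $A$ is abelian when $n \geq 1$: because $M_{n+1} = 0$ we have $\delta_{n+1} = 0$, so by Lemma \ref{homo} the $n$th homotopy group is $\pi_n(X) \cong M_n = A$, and homotopy groups of simplicial groups in positive degree are abelian. The case $n = 0$ can be handled directly using the semi-direct product decomposition of \ref{ordersn}: it collapses to $X_i \cong M_0 = A$ with all face and degeneracy maps equal to the identity, so that $X$ is already forced to be $Dis(A)$, as expected from the $n = 0$ convention in \ref{eilmac}.

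Next I would run through the hypercrossed data in the case $n \geq 1$. An action $\Phi^k_\alpha : M_{r(\alpha)} \to Aut(M_k)$ is trivial whenever $k \neq n$, since then $M_k = 0$ and hence $Aut(M_k)$ is trivial; when $k = n$ and $\alpha$ is a non-identity surjection, the source $M_{r(\alpha)}$ vanishes because $r(\alpha) < n$; and the only remaining possibility, the inner action of $M_n$ on itself, is trivial precisely because $A$ is abelian. The same case analysis applies to each bracket $\Gamma^k_{\alpha,\sigma}$: either the target $M_k$ or one of the sources $M_{r(\alpha)}, M_{r(\sigma)}$ must vanish. Consequently the hypercrossed module attached to $X$ reduces to the bare chain complex $M$ with no additional structure, and by \eqref{K(A,n)} exactly the same holds for $K(A,n)$; Carrasco-Cegarra then delivers the isomorphism $X \cong K(A,n)$. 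The main point worth checking carefully is that no component of the hypercrossed structure escapes this case analysis — the inner action on $M_n$, where the abelian-ness of $A$ is genuinely used, is the one step whose validity is most worth cross-checking against the original formalism of \cite{CaCe91}.
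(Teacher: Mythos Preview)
Your proposal is correct and follows essentially the same route as the paper: both invoke the Carrasco--Cegarra equivalence and argue that when the Moore complex is concentrated in a single degree the hypercrossed actions $\Phi^k_\alpha$ and brackets $\Gamma^k_{\alpha,\sigma}$ are forced to be trivial, so the hypercrossed datum is unique and $X\cong K(A,n)$. Your version is more thorough---you separate out the $n=0$ case, justify that $A$ is abelian for $n\geq 1$, and explicitly treat the inner action of $M_n$ on itself---whereas the paper's proof dispatches all this in a single sentence.
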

\begin{proof} Consider the chain complex $M$ as above. Since all degrees of $M$ are trivial except one, any morphism $M_i \to Aut(M_j)$ with $j>i$ is trivial as well as any binary mappings $M_i \times M_j \to M_k$ with $k>i,j$. This means that the structure of hypercrossed module is necessarily unique. Thus, it follows from the equivalence of hypercrossed modules and simplicial groups (see \cite{CaCe91}) that $X \cong K(A,n)$. 
\end{proof}

The next corollary generalizes part 4) of Proposition \ref{porter2}.

\begin{coro}\label{kanEL} For $n \geq 0$ and  a simplicial group $X$ the simplicial groups:
\[\Pi^{n \geq}_{\geq n+2}(X)\, ,\quad \Pi^{n \geq}_{n+1 \geq }(X)\, , \quad \Pi^{ \geq n+1}_{\geq n+2}(X)\, , \quad \Pi^{ \geq n+1}_{n+1 \geq }(X)\]
are $K(\pi_{n+1}(X), n+1)$-simplicial groups.

Moreover, $\Pi^{\geq n+1}_{n+1\geq}(X)$ is isomorphic to $K(\pi_{n+1}(X),n+1)$ the $(n+1)$-th Eilenberg-Mac Lane simplicial group of $\pi_{n+1}(X)$.
\end{coro}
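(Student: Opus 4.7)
The plan is to handle the two claims of the corollary separately. For the first claim---that all four simplicial groups are $K(\pi_{n+1}(X), n+1)$-simplicial groups---I would simply invoke Theorem~\ref{funsimgrp}: part 6) directly gives $\pi_i(\Pi^{n \geq}_{\geq n+2}(X)) = \pi_{n+1}(X)$ for $i = n+1$ and zero otherwise, and specializing part 5) to $m = n+1$ extends this homotopy computation to the other three quotients $\Pi^{n \geq}_{n+1 \geq}(X)$, $\Pi^{\geq n+1}_{\geq n+2}(X)$, and $\Pi^{\geq n+1}_{n+1 \geq}(X)$. The definition of an Eilenberg--Mac Lane simplicial group in \ref{eilmac} finishes this part immediately.

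For the sharper second claim---the identification of $\Pi^{\geq n+1}_{n+1 \geq}(X)$ with the canonical simplicial group $K(\pi_{n+1}(X), n+1)$---I would compute its Moore complex explicitly and then invoke Lemma~\ref{coroeilmac}. Concretely, the last theorems of sections~5 and~6 tell us that normalization sends the preradical $\mu_{\geq n+1}$ to $ker_{n+1}$ and the preradical $\mu_{n+1 \geq}$ to $cok_{n+2}$. Since $N$ is exact, applying it to the short exact sequence defining $\mu_{\geq n+1}(X)/\mu_{n+1 \geq}(X)$ yields the chain complex $ker_{n+1}(M)/cok_{n+2}(M)$ as the Moore complex of the quotient. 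The degrees $i \geq n+2$ cancel between numerator and denominator, and in degree $n+1$ the quotient becomes $ker(\delta_{n+1})/\delta_{n+2}(M_{n+2}) = H_{n+1}(M) = \pi_{n+1}(X)$ (exactly as in Lemma~\ref{homo}, part 6); in all other degrees the quotient is zero. Hence $N(\Pi^{\geq n+1}_{n+1 \geq}(X))$ is a chain complex concentrated in degree $n+1$ with value $\pi_{n+1}(X)$, and Lemma~\ref{coroeilmac} forces $\Pi^{\geq n+1}_{n+1 \geq}(X) \cong K(\pi_{n+1}(X), n+1)$.

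The main obstacle---or at least the step deserving real attention---is the verification that the Moore complex of $\mu_{\geq n+1}(X)/\mu_{n+1 \geq}(X)$ is actually concentrated in a single degree. The other three quotients in the statement have the same homotopy groups but their Moore complexes genuinely sit in several degrees (for instance $N(\Pi^{n \geq}_{\geq n+2}(X))$ has nonzero entries in degrees $n,n+1,n+2$), which is precisely why Lemma~\ref{coroeilmac} only yields a canonical isomorphism for the quotient $\Pi^{\geq n+1}_{n+1 \geq}(X)$. The coincidence of the two truncation indices in the superscript and the subscript is what causes the maximal cancellation.
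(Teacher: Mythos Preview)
Your proof is correct and matches the paper's own argument essentially step for step: the paper also invokes part 6) of Theorem~\ref{funsimgrp} for the first claim (you are slightly more careful in citing part 5) to cover all four quotients), and for the second claim it likewise identifies the Moore complex of $\Pi^{\geq n+1}_{n+1 \geq}(X)$ with $ker_{n+1}(M)/cok_{n+2}(M)$, observes this is concentrated in degree $n+1$ with value $\pi_{n+1}(X)$, and applies Lemma~\ref{coroeilmac}. Your closing remark explaining why only this one quotient has a single-degree Moore complex is a helpful addition not present in the paper.
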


\begin{proof} It follows by definition from 6) of theorem \ref{funsimgrp}.

 Moreover, the Moore complex of $\Pi^{\geq n+1}_{n+1 \geq}(X)$ isomorphic to the chain complex $\frac{ker_{n+1}(M)}{cok_{n+2}(M)}$, i.e.: 
 \[\begin{tikzcd} \dots\ar[r] & 0\ar[r] & 0\ar[r] & \pi_{n+1}(X) \ar[r] & 0\ar[r] & 0\ar[r] & \dots \end{tikzcd}\]
  It follows from lemma \ref{coroeilmac} that $\Pi^{\geq n+1}_{n++1 \geq}(X) \cong K(\pi_{n+1}(X) ,n+1)$.
\end{proof}

\begin{coro}
\begin{enumerate}
\item The fundamental functor $\Pi_{0 \geq}$ is naturally isomorphic to the connected component functor $\pi_0$ followed by the discrete functor:
\[\begin{tikzcd}  Simp(Grp)\ar[rr, "\Pi_{0 \geq}"]\ar[rd, "\pi_0"'] & & Simp(Grp) \\ & Grp\ar[ru, "Dis"'] & \end{tikzcd} \] 
\item For $n \geq 1$, the fundamental functor $\Pi^{\geq n+1}_{n+1 \geq}$ is naturally isomorphic to the homotopy group functor $\pi_n$ followed by the embedding $K(\_, n)$:
\[\begin{tikzcd}  Simp(Grp)\ar[rr, "\Pi_{n+1 \geq}^{\geq n+1}"]\ar[rd, "\pi_n"'] & & Simp(Grp) \\ & Ab \ar[ru, "{K(\_ ,n)}"'] & \end{tikzcd} \] 
\end{enumerate}
\end{coro}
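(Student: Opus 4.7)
The plan is to derive both statements as immediate consequences of results already established in the paper: Proposition \ref{porter2} together with the equivalence $\mathcal{M}_{0 \geq} \simeq Dis(Grp)$ for (1), and Corollary \ref{kanEL} (itself relying on Lemma \ref{coroeilmac}) for (2). The real content of both parts is object-wise; upgrading to naturality of functors is then essentially routine.

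For part (1), I would first unfold the definition $\Pi_{0 \geq}(X) = X/\mu_{0 \geq}(X) = Cot_0(X)$ from Definition \ref{funsimpgrp}. Proposition \ref{porter2} then tells us that $Cot_0(X)$ belongs to $\mathcal{M}_{0 \geq}$, that the unit $\eta_X \colon X \to Cot_0(X)$ induces an isomorphism on $\pi_0$, and that the higher homotopy groups of $Cot_0(X)$ vanish. The equivalence $\mathcal{M}_{0 \geq} \simeq Dis(Grp)$ (used in the proof of Corollary \ref{disgrpdtf}) yields $Cot_0(X) \cong Dis(H)$ for some group $H$. A discrete simplicial group has all face and degeneracy maps equal to identities, so its Moore complex is concentrated in degree zero and $\pi_0(Dis(H)) = H$; hence $H \cong \pi_0(X)$ and $\Pi_{0 \geq}(X) \cong Dis(\pi_0(X))$. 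Naturality in $X$ is inherited from the naturality of $\eta$ and from the functoriality of $\pi_0$ and $Dis$.

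For part (2), I would apply Corollary \ref{kanEL} pointwise to identify $\Pi^{\geq n+1}_{n+1 \geq}(X)$ with the Eilenberg--Mac Lane simplicial group determined by the homotopy group of $X$ in the relevant degree, obtained through Lemma \ref{coroeilmac} and the Cegarra--Carrasco equivalence of $Simp(Grp)$ with hypercrossed modules. Since Moore normalization is a functor, the identification of the Moore complex of $\Pi^{\geq n+1}_{n+1 \geq}(X)$ as a chain complex concentrated in a single degree is natural in $X$; composing with the (functorial) inverse of the Cegarra--Carrasco correspondence then yields the required natural isomorphism of functors $Simp(Grp) \to Simp(Grp)$.

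The only step requiring care, though not a genuine obstacle, is verifying in part (2) that the object-wise isomorphism of Lemma \ref{coroeilmac} assembles into a natural isomorphism. The lemma's uniqueness argument rests on the rigidity of hypercrossed-module structures on a chain complex concentrated in a single positive degree; tracking how this rigidity interacts with morphisms between such chain complexes produces the naturality, and the remaining diagrammatic checks are straightforward bookkeeping within the equivalence of \cite{CaCe91}.
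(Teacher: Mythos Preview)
Your proposal is correct and follows essentially the same approach as the paper: for part (1) the paper invokes Corollary \ref{disgrpdtf} together with the explicit identification $\pi_0(X)=X_0/\delta_1(M_1)$ to get $\Pi_{0\geq}=Cot_0=Dis\circ\pi_0$ directly, and for part (2) it simply cites Corollary \ref{kanEL}. Your argument for (1) is marginally more indirect---deducing the identification from the homotopy-invariance statements in Proposition \ref{porter2} rather than from the formula for $Cot_0$---but this is a presentational difference rather than a substantive one, and your added remarks on naturality only make explicit what the paper leaves tacit.
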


\begin{proof} 1) From Corollary \ref{disgrpdtf}, the torsion theory $\mu_{0 \geq}$ has as  torsion-free reflector the functor $\Pi_{0\geq}=Cot_0= Dis \pi_0$ since $\pi_0(X) = coeq(d_0,d_1)=X_0/\delta_1(M_1)$. 2) It follows from Corollary \ref{kanEL}.  
\end{proof}

Following \cite{GabZis67}, the \textit{fundamental groupoid} or Poincar\'e groupoid $\Pi_1(X)$ of a simplicial set $X$ has as objects the set $X_0$, the vertices of $X$, and morphisms are generated by the elements of $X_1$ and their formal inverses and the relations $s_0(x)=1_x$ if $x\in X_0$ and $(d_0\sigma)(d_2\sigma)=d_1\sigma$ if $\sigma \in X_2$. Recently, in \cite{Duv19} the fundamental groupoid $\Pi_1: Simp(\X) \to Grpd(\X)$ has been studied for simplicial objects in an exact Mal'tsev category $\X$ as the left adjoint of the nerve functor $\mathcal{N}: Grpd(\X) \to Simp(\X)$. Indeed, if $\X$ is semi-abelian (in particular the category of groups as in our case) for a simplicial object $X$, $\Pi_1(X)$  is the unique groupoid  strucutre that has 
\[\begin{tikzcd} X_1/(d_2(Ker(d_0)\cap Ker(d_1)) )  \ar[r, shift right=2]\ar[r, shift left=2] & X_0\ar[l]  \end{tikzcd}\]
as the underlying reflexive graph, which for simplicial groups corresponds to the cotruncation functor $Cot_1$. Thus we have:

\begin{coro}
 The fundamental functor $\Pi_{1 \geq}$ is naturally isomorphic to the fundamental groupoid functor as indicated in the diagram:
\[\begin{tikzcd}  Simp(Grp)\ar[rr, "\Pi_{1 \geq}"]\ar[rd, "\Pi_1"'] & & \mathcal{M}_{1 \geq} \\ & Grpd(Grp) \ar[ru, "\mathcal{N}"'] & \end{tikzcd} \] 
\end{coro}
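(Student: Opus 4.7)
The plan is to reduce the claim to identifications already made in the paper. By Definition \ref{funsimpgrp}, $\Pi_{1 \geq}$ equals $Id/\mu_{1 \geq}$, which by construction of the torsion theory $\mu_{1 \geq}$ is precisely Porter's cotruncation $Cot_1 : Simp(Grp) \to \mathcal{M}_{1 \geq}$. Hence it suffices to produce a natural isomorphism $Cot_1 \cong \mathcal{N} \circ \Pi_1$.

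My preferred route goes through the uniqueness of left adjoints. Loday's equivalence $\mathcal{M}_{1 \geq} \simeq Grpd(Grp)$, recalled just before Corollary \ref{disgrpdtf}, identifies, up to natural isomorphism, the full inclusion $i_1 : \mathcal{M}_{1 \geq} \hookrightarrow Simp(Grp)$ with the nerve functor $\mathcal{N} : Grpd(Grp) \to Simp(Grp)$. By Proposition \ref{porter2}(1), $Cot_1 \dashv i_1$; by the discussion from \cite{Duv19} immediately preceding the corollary, $\Pi_1 \dashv \mathcal{N}$. After transporting along Loday's equivalence, both $Cot_1$ and $\mathcal{N} \circ \Pi_1$ become left adjoints of the same fully faithful functor, so they agree up to natural isomorphism, which is exactly commutativity of the displayed triangle.

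As a sanity check one can also argue concretely: the description of $\Pi_1(X)$ as the groupoid on the reflexive graph $X_1/d_2(Ker(d_0) \cap Ker(d_1)) \rightrightarrows X_0$ matches the $1$-truncation of $Cot_1(X)$ read off from \ref{porter}, and since an object of $\mathcal{M}_{1 \geq}$ is determined by its $1$-truncation (being the nerve of the associated internal groupoid) this coincidence globally yields the required natural isomorphism. The only delicate point is therefore the identification of $i_1$ with $\mathcal{N}$ under Loday's equivalence; this is standard and essentially the content of \cite{Lod82}, after which the argument is purely formal.
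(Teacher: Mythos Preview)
Your argument is correct. The paper does not supply a separate proof environment for this corollary; its justification is the sentence immediately preceding it, which observes that Duvieusart's description of $\Pi_1(X)$ as the groupoid on the reflexive graph $X_1/d_2(Ker(d_0)\cap Ker(d_1)) \rightrightarrows X_0$ ``corresponds to the cotruncation functor $Cot_1$''. That is exactly your sanity check.

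Your primary route via uniqueness of left adjoints is a mildly different packaging: rather than matching the explicit level-$1$ descriptions, you transport $Cot_1 \dashv i_1$ along Loday's equivalence $\mathcal{M}_{1\geq}\simeq Grpd(Grp)$ and compare with $\Pi_1 \dashv \mathcal{N}$. This is cleaner and avoids any pointwise verification, at the cost of invoking the identification $i_1 \cong \mathcal{N}$ under Loday's equivalence, which (as you note) is standard. The paper's approach is more hands-on but immediate from the cited description in \cite{Duv19}. Either way the content is the same identification $\Pi_{1\geq}=Cot_1\cong \mathcal{N}\circ\Pi_1$, so there is no gap.
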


\section*{Acknowledgements}
This work is part of the author's Ph.D. thesis \cite{Lop22}. The author would like to thank his advisor Marino Gran for his guidance and suggestions.

{\footnotesize INSTITUT DE RECHERCHE EN MATH\'EMATIQUE ET PHYSIQUE,UNIVERSIT\'E CATHOLIQUE DE LOUVAIN, CHEMIN DU CYCLOTRON 2, 1348 LOUVAIN-LA-NUEVE, BELGIUM
\newline \textit{E-mail address}: guillermo.lopez@uclouvain.be}

\end{document}